\title{Stochastic Partial Differential Equations on Evolving Surfaces and Evolving Riemannian Manifolds}
\author{C.M. Elliott*, M. Hairer*, M.R. Scott\footnote{MASDOC DTC, Mathematics Institute, University of Warwick, Coventry, CV4 7AL. Email: michael.r.scott@warwick.ac.uk. The research for M.S was funded by EPSRC as part of the MASDOC DTC with grant reference number EP/HO23364/1.}}
\newcommand{\N}{\mathbb{N}}
\newcommand{\R}{\mathbb{R}}
\newcommand{\E}{\mathbb{E}}
\newcommand{\F}{\mathcal{F}}
\renewcommand{\P}{\mathbb{P}}
\newcommand{\M}{\mathcal{M}}
\newcommand{\vphi}{\varphi}
\newcommand{\rd}{\mathrm{d}}
\newcommand{\pd}{\partial}
\newcommand{\abs}[1]{\left| #1 \right|}
\newcommand{\norm}[1]{ \| #1  \|}
\newcommand{\inner}[2]{\langle #1 , #2 \rangle}
\theoremstyle{plain}
\newtheorem{thm}{Theorem}[section]
\newtheorem{prop}[thm]{Proposition}
\newtheorem{lemma}[thm]{Lemma}
\newtheorem{rem}[thm]{Remark}
\newtheorem{defn}[thm]{Definition}
\newtheorem{ass}[thm]{Assumption}
\newtheorem{example}[thm]{Example}
\begin{document}

\maketitle
%\tableofcontents
%\newpage

\begin{abstract}
We formulate stochastic partial differential equations on Riemannian manifolds, moving surfaces, general evolving Riemannian manifolds (with appropriate assumptions) and Riemannian manifolds with
random metrics,  in the variational setting of the analysis to stochastic partial differential equations. Considering mainly linear stochastic partial differential equations, we establish various existence and uniqueness theorems. 
\end{abstract}

\section{Introduction}
Stochastic partial differential equations (SPDE) are becoming increasingly popular in the mathematical modelling literature. Analogous to the difference between ordinary differential equations (ODEs)
and partial differential equations (PDEs), it seems that in some cases, stochastic differential equations are not as accurate as describing physical phenomena as SPDEs are. It is because of this, and
the want of generalising It\={o} diffusions to infinite dimensions for applications to problems in physics, biology and optimal control that the theory of SPDEs has grown exponentially in the past four decades.

However, there seems to be a distinct lack of mathematical theory for SPDEs on moving surfaces, at odds with the deterministic counterpart. Indeed, a survey into the mathematical literature for SPDEs on moving
surfaces produces no results. The use of such objects is wide-spread in the applied literature (\cite{meinhardt1982, meinhardt1999, neilson2010} amongst others) and indeed the paper by \cite{neilson2010}
along with the suggestion of Professor Charles Elliott prompted this study into the objects.

If we go one step back and ask for SPDEs on (Riemannian) manifolds, instead of moving surfaces, we find three papers \cite{gyongy1993,gyongy1997} and \cite{funaki1992}. The last paper considers  
SPDEs whose solution is a function $f : S \to \M$ where $S$ is the unit disc and $\M$ is the manifold. Although such objects are prevalent in mathematical physics (\cite{funaki1992} and references within) we are only interested
in SPDEs whose solution is a real valued function $g : \M \to \R$. Indeed, the only theory for SPDEs on manifolds with real-valued functions as solutions is given in \cite{gyongy1993,gyongy1997}.

There are three main approaches to analysing SPDEs, namely the ``martingale approach'' (cf \cite{walsh1986}), the ``semigroup (or mild solution)'' approach (cf \cite{daprato1992}) and the 
``variational approach'' (cf \cite{Rozovskii1990}, \cite{prevot2007}). The approach of SPDEs on a differentiable manifold in \cite{gyongy1993} is that of \cite{daprato1992}; namely the semigroup approach.

There is no mathematical literature for the variational approach to SPDEs on Riemannian manifolds and for this reason, we adopt this approach in this paper. Here we pose and give existence and uniqueness
results for SPDEs on Riemannian manifolds, SPDEs on moving surfaces and finally SPDEs on evolving Riemannian manifolds, which allows us to look at Riemannian manifolds with random metrics. This paper is organised in the following way:

In chapter~\ref{Chap:SPDETheGeneralSetting} we proceed to define stochastic partial differential equations in the general setting. This will be an abstract setting and where we mainly follow the monograph 
of \cite{prevot2007}. After giving notation and elementary definitions, we briefly look at the abstract
definition of what a SPDE is in terms of the variational approach, giving an existence and uniqueness result, concluding by giving an example.

In chapter~\ref{Chap:SPDEOnRiemannianManifolds} we formulate what it means to have an SPDE on a Riemannian manifold, $\M$. We give a self-contained (presenting results without proof) introduction to Riemannian geometry
which sets up all the necessary theory to define differential operators for smooth functions $f : \M \to \R$. Following this, we define the Sobolev spaces needed and prove the Poincar\'{e} inequality which is
needed for a later example. Having set all the preliminary theory, we define what it means to have a SPDE on a Riemannian manifold and consider two specific examples; proving an existence and uniqueness result in each case. For the
examples we consider the stochastic heat equation whilst the second
example is the non-degenerate stochastic heat equation, where the Laplace-Beltrami operator is replaced with the $p-$Laplace-Beltrami operator, for $p > 2$.

In chapter~\ref{Chapter:SPDEonMovingSurfaces} we study SPDEs on moving hypersurfaces. Firstly, we define what we mean by a hypersurface giving all the necessary theory. Following this we
 formulate a deterministic PDE on a moving surface as a consequence of conservation law which allows us to consider the stochastic analogue (which includes choosing the noise) of this object. This turns out to be the stochastic heat equation
 on a general evolving hypersurface $\M(t)$. We always assume that $\M(t)$ is compact, connected, without boundary and oriented for all $t \in [0,T]$, with points evolving with normal velocity only. Penultimately we consider the concrete example of when $\M(t)$ is the $S^{n-1}$ sphere evolving according to ``mean curvature flow'' 
and we finally consider the nonlinear stochastic heat equation on a general moving surface, where points on the surface evolve with normal velocity only, noting that the nonlinearity is not in any
of the derivatives.

In chapter~\ref{chap:SPDEOnGeneralEvolvingManifolds} we change how we think about a manifold evolving. Instead of thinking of a one-parameter family of manifolds $\M(t)$, $t \in [0,T]$
we think of one manifold $\M$ with a one-parameter family of metrics $g(\cdot, t)$, $t \in [0,T]$. As given in the discussion section of chapter~\ref{chap:SPDEOnGeneralEvolvingManifolds}, we will see that
under specific technical assumptions, the equations that live on $\M$ are equivalent to the equations that live on $\M(t)$. We also see that this change of view enables a more natural noise to be
chosen, as supposed to the one chosen in chapter~\ref{Chapter:SPDEonMovingSurfaces}. Following the discussion, we give an existence and uniqueness theory for a general parabolic SPDE, with minimal assumptions
for which the approach works. Following this, we consider a random perturbation of a given initial metric, which we will refer to as a ``random metric''.

Chapter~\ref{chap:FurtherResearch} is the final chapter, detailing possible extensions to this paper for further research. We detail the mathematical challenges needed to be overcome in order to
solve the problems outlined.

Thanks go to C.M.E and M.H for supervising M.R.S during this project.
% The reader must note that throughout this thesis, for ease of mathematical presentation, we consider a simple linear SPDE in most cases. There are exceptions to this (namely section~\ref{Chapter:SPDEOnRiemannianManifolds:Section:ANonlinearSPDEOnaRiemannianManifold} 
% and section~\ref{Section:SPDEOnMovingSurfaceGeneral:NonLinearStochasticHeatEquationOnAEvolvingSurface}). In section~\ref{Section:A_Parabolic_SPDE_on_an_Evolving_Riemannian_Manifold} we consider a general linear
% parabolic SPDE and the techniques given here can also be adopted in section~\ref{Section:A_Parabolic_SPDE_On_A_Randomly_Evolving_Riemannian_Manifold}, but we only gave a simple example here for ease.

\section{Stochastic partial differential equations: The general setting}
\label{Chap:SPDETheGeneralSetting}
\subsection{Notation and definitions}
\label{section:GeneralSetting:Notation}
We adopt the notation and give definitions as in \cite{prevot2007}. Throughout this paper we fix $T \in (0, \infty)$ and a probability space $(\Omega, \F , \P)$ with filtration $\F_{t}$ that satisfies the usual conditions, i.e it is right continuous and $\F_{0}$ contains
all the $\P-$null sets. For $X$ a (separable) Banach space, we denote by $\mathcal{B}(X)$ the Borel $\sigma$-algebra. Unless otherwise stated, all measures will be Borel measures. 

Let $H$ be a separable Hilbert space with inner product $\inner{\cdot}{\cdot}_{H}$ and induced norm $\norm{\cdot}_{H}$. Suppose $V$ is a Banach space with $V \subset H$ continuously and densely. By 
this we mean there exists $C > 0$ such that $\norm{u}_{H} \leq C \norm{u}_{V}$ for every $u \in V$ and that given $u \in H$ there exists a sequence $u_{k} \in V$ such that 
$\norm{u_{k} - u }_{H} \to 0 $ as $k \to \infty$. For the dual of $V$, denoted $V^{*} := \{ l : V \to \R \, \vert \, l \, \, \mathrm{linear} \, \, \mathrm{and} \, \, \mathrm{bounded}\}$ we have that
$H^{*} \subset V^{*}$ continuously and densely and identifying $H$ and $H^{*}$ via the Riesz isomorphism we have
\[
 V \subset H \subset V^{*}.
\]
Such a triple is called a Gelfand triple. We denote the pairing between $V^{*}$ and $V$ as $\inner{\cdot}{\cdot}$ and note that for $h \in H$ and $v \in V$ we have $\inner{h}{v} = \inner{h}{v}_{H}$.

We denote by $L(X, Y)$ all the linear maps from $X$ to $Y$. When $X = Y$ we write $L(X)$ instead of $L(X,X)$.

If $X$ and $Y$ are separable Hilbert spaces and $\{ e_{i} \}^{\infty}_{i=1}$ is an orthonormal basis of $X$ then $T \in L(X, Y)$ is called \emph{Hilbert--Schmidt} if
\begin{equation}
\label{eqn:HilbertSchmidtNorm}
\norm{T}^{2}_{L_{2}(X,Y)}:= \sum_{i \in \N}{\inner{T e_{i}}{T e_{i}}_{H}} < \infty
\end{equation}
and is called \emph{finite--trace} if
\[
\mathrm{tr}\, (T):= \sum_{i \in \N}{\inner{ T e_{i}}{e_{i}}} < \infty.
\]
We denote the linear space of all Hilbert-Schmidt operators from $X$ to $Y$ by $L_{2}(X,Y)$ and equip this space with the norm defined in \eqref{eqn:HilbertSchmidtNorm}.

Fix $U$ a separable Hilbert space, $T \in( 0 , \infty) $ and $Q \in L(U)$ such that $Q$ is non-negative definite, symmetric with finite trace (which implies that $Q$ has non-negative eigenvalues).
\begin{defn}
 \label{StandardQWienerProcess}
A $U-$valued stochastic process $W(t)$, $t \in [0, T]$, on a probability space $(\Omega, \F, \P)$ is called a (standard) $Q-$Wiener process if
\begin{enumerate}
 \item W(0) = 0;
 \item $W$ has $\P-$a.s continuous trajectories;
 \item The increments of $W$ are independent. That is, the random variables
\[
 W(t_{1}), W(t_{2}) - W(t_{1}) , \cdots , W(t_{n}) - W(t_{n-1})
\]
are independent for all $0 \leq t_{1} < \cdots < t_{n} \leq T,$ $n \in \N$;
\item The increments have the following Gaussian laws
\[
 \P \circ (W(t) - W(s))^{-1} = N(0, (t-s)Q) \quad \text{for every} \quad 0 \leq s \leq t \leq T.
\]
\end{enumerate}
\end{defn}
Note that the definition of the stochastic integral can be generalised to the case of cylindrical Wiener processes, where the covariance operator need not have finite trace. The reader is directed to \cite{prevot2007} for a more general discussion.

\subsection{Abstract theory of stochastic partial differential equations}
In the following we will fix $U$ a separable Hilbert space and let $Q = I$. Let $W$ be the resulting cylindrical Wiener process. It is this object that will be the mathematical model for ``noise'' 
in the SPDEs.

We will follow \cite{prevot2007} chapter 4 for the formulation, statements of the existence and uniqueness theorem and their consequent proofs.

Let $H$ be a fixed separable Hilbert space with inner product $\inner{\cdot}{\cdot}_{H}$ and denote by $H^{*}$ its dual. Let $V$ be a Banach space such that $V \subset H$ continuously and densely as
in section \ref{section:GeneralSetting:Notation}. Consider the Gelfand triple $V \subset H \subset V^{*}$ as discussed in section \ref{section:GeneralSetting:Notation}. Here, $\mathcal{B}(V)$ is
generated by $V^{*}$ and $\mathcal{B}(H)$ by $H^{*}$.

We wish to study stochastic differential equations on $H$ of the type
\begin{equation}
\label{eqn:CentralSPDE}
 \begin{aligned}
  \rd X(t) &= A(t,X(t)) \, \rd t + B(t, X(t)) \, \rd W(t) \\
     X(0) &= X_{0}
 \end{aligned}
\end{equation}
where $X_{0}$ is a given stochastic process. 

We will refer to such equations \eqref{eqn:CentralSPDE} as \emph{stochastic partial differential equations} (SPDE) for when $A$ is a differential operator.

The important point to realise is that an SPDE is an infinite dimensional object. It is quite useful to think of such objects as ``PDE + noise''. Indeed, even though $A$ and $B$ take values in
$V^{*}$ and $L_{2}(U,H)$ respectively, the solution $X$ will, however, take values in $H$ again. For when $V$ and $H$ are function spaces and $A$ is a differential operator this means that the
solution is function valued, which is perhaps a difficult concept to comprehend at first.

We proceed to give the precise conditions on $A$ and $B$ that will be considered through the paper.

Fix $T \in (0,\infty)$ and let $(\Omega, \F, \P)$ be a complete probability space with normal filtration $\F_{t}$, $t \in [0,T]$. We assume that $W$ is a cylindrical $Q$-Wiener process with
respect to $\F_{t}$, $t \in [0,T]$, taking values in $U$ and with $Q = I$.

\noindent Let
\begin{align*}
 A &: [0,T] \times V \times \Omega \longrightarrow V^{*} \\
 B &: [0,T] \times V \times \Omega \longrightarrow L_{2}(U,H)
\end{align*}
be \emph{progressively measurable}. By this we mean that for every $t \in [0,T]$ the maps $A$ and $B$ restricted to $[0,t] \times V \times \Omega$ are $\mathcal{B}[0,t] \otimes \mathcal{B}(V) \otimes \F_{t}$-measurable.
When we write $A(t, v)$ we mean the map $\omega \to A(t, v, \omega)$ and analogously for $B(t,v)$.

\begin{ass}
 \label{Ass:HypothesesOnA&B}
The following hypotheses will be on $A$ and $B$ throughout the paper.
\begin{enumerate}[(H1)]
 \item (Hemicontinuity) For all $u, v, x \in V,$ $\omega \in \Omega$ and $t \in [0,T]$ the map
\[
 \R \ni \lambda \mapsto \inner{A(t, u + \lambda v, \omega)}{x}
\]
is continuous.
 \item (Weak Monotonicity) There exists $c \in \R$ such that for every $u, v \in V$
\begin{align*}
 & 2 \inner{A(\cdot, u) - A(\cdot, v)}{u-v} + \norm{B(\cdot, u) - B(\cdot, v)}^{2}_{L_{2}(U,H)}  \\ 
 &\leq c \norm{u-v}^{2}_{H} \, \, \text{on} \, \, [0,T] \times \Omega.
\end{align*}

 \item (Coercivity) There exists $\alpha \in (1, \infty)$, $c_{1} \in \R$, $c_{2} \in (0, \infty)$ and an $(\F_{t})$-adapted process $f \in L^{1}([0,T] \times \Omega, \rd t \otimes \P)$ such that
for every $v \in V$, $t \in [0,T]$
\begin{align*}
&2 \inner{A(t,v)}{v} + \norm{B(t, v)}^{2}_{L_{2}(U,H)} 
 \leq c_{1} \norm{v}^{2}_{H} - c_{2} \norm{v}^{\alpha}_{V} + f(t) \quad \text{on} \quad \Omega.
\end{align*}

 \item (Boundedness) There exists $c_{3} \in [0, \infty)$ and an $(\F_{t})$-adapted process $g \in L^{\frac{\alpha}{\alpha - 1}}([0,T] \times \Omega, \rd t \otimes \P)$ such that for every $v \in V$,
$t \in [0,T]$
\[
 \norm{A(t, v)}_{V^{*}} \leq g(t) + c_{3} \norm{v}^{\alpha-1}_{V}
\]
where $\alpha$ is the same as in H3.
\end{enumerate}

\end{ass}

These hypotheses appear to be quite abstract and on the face of it, and so we give some intuition as to why they are needed.

One can see that H3 and H4 really come from the deterministic case of the variational approach to PDE (\cite{evans1998}). Note also that in the case of $A$ being non-linear, H2 is also common in
deterministic PDE theory. Indeed, the method of Minty and Browder (\cite{renardy2004}) uses the monotonicity of $A$ to identify the weak limit of $A(u_{k})$ as $A(u)$ (here $u_{k}$ is some Galerkin approximation to the solution $u$). Furthermore, as in the case of Minty and Browder,
continuity of $u \mapsto A(\cdot, u)$ is used and so H1 is a natural generalisation of this.

The reader should observe that as soon as $A$ is linear on $V$, H1 is immediately satisfied by the definition of the pairing between $V$ and $V^{*}$. To see this let $ t \in [0,T]$ and $u,v,x \in V$ and $\omega \in \Omega$. Then
\[
 \inner{A(t, u + \lambda v, \omega)}{x} = \inner{A(t, u, \omega)}{x} + \lambda \inner{A(t, v, \omega)}{x}
\]
and so $\R \ni \lambda \mapsto  \inner{A(t, u + \lambda v, \omega)}{x}$ is clearly continuous.

Later we will give examples of $A$ and $B$ and of the spaces $V, H$ and $V^{*}$ but first we proceed to define exactly what we mean by ``solution'' to \eqref{eqn:CentralSPDE}, as taken from \cite{prevot2007} page 73.

\begin{defn}
 \label{defn:SolutionToCentralSPDE}
A continuous $H$-valued $(\F_{t})$-adapted process $X(t)$, $t \in [0,T]$, is called a solution of \eqref{eqn:CentralSPDE}, if for its $\rd t \otimes \P$-equivalence class $\hat{X}$ we have
$\hat{X} \in L^{\alpha}([0,T] \times \Omega, \rd t \otimes \P; V) \cap L^{2}([0,T], \Omega, \rd t \otimes \P; H)$ with $\alpha$ as in H3 and $\P$-a.s
\[
 X(t) = X(0) + \int_{0}^{t}{A(s, \bar{X}(s))} \, \rd s + \int_{0}^{t}{B(s, \bar{X}(s)) \, \rd W(s)}, \, t \in [0,T],
\]
where $\bar{X}$ is any $V$-valued progressively measurable $\rd t \otimes \P$-version of $\hat{X}$·
\end{defn}
\noindent For the technical details of the construction of $\bar{X}$, the reader is directed to exercise 4.2.3 of \cite{prevot2007} page 74.

The following is the main existence result, which was originally proven in \cite{krylov1979}. Instead of giving the proof in its entirety, we outline the ideas and refer the reader to the relevant
pages of \cite{prevot2007}.

\begin{thm}
 \label{thm:ExistenceUniquenessOfSolnToCentralSPDE}
Let $A$ and $B$ satisfy assumption~\ref{Ass:HypothesesOnA&B} and suppose $X_{0} \in L^{2}(\Omega, \F_{0}, \P; H)$. Then there exists a unique solution $X$ to \eqref{eqn:CentralSPDE} in the sense
of definition~\ref{defn:SolutionToCentralSPDE}. Moreover,
\[
 \E \Big[ \sup_{t \in [0,T]} \norm{X(t)}^{2}_{H}\Big ] < \infty.
\]
\end{thm}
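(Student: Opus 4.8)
The plan is to run the classical Galerkin scheme, following the proof in \cite{prevot2007} (see also \cite{krylov1979}). First I would fix an orthonormal basis $\{e_i\}_{i\in\N}$ of $H$ consisting of elements of $V$ and with span dense in $V$, set $H_n := \mathrm{span}\{e_1,\dots,e_n\}$, and let $P_n y := \sum_{i=1}^n \inner{y}{e_i}e_i$, which maps $V^*$ into $H_n$ and restricts to the orthogonal projection on $H$. Projecting \eqref{eqn:CentralSPDE} onto $H_n$ yields the finite-dimensional It\^o equation
\[
 \rd X^{(n)}(t) = P_n A(t, X^{(n)}(t))\,\rd t + P_n B(t, X^{(n)}(t))\,\rd W(t), \quad X^{(n)}(0) = P_n X_0,
\]
whose drift is continuous in the $H_n$-variable by (H1) and of polynomial growth by (H4), and which satisfies a one-sided linear growth bound coming from (H3) (recalling that the $V$- and $H$-norms are equivalent on the finite-dimensional space $H_n$). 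Standard finite-dimensional SDE theory then produces a global strong solution $X^{(n)}$.

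Next I would derive the a priori bounds. Applying It\^o's formula to $\norm{X^{(n)}(t)}_H^2$, using (H3) to control $2\inner{A(s,X^{(n)})}{X^{(n)}} + \norm{P_n B(s,X^{(n)})}_{L_2(U,H)}^2$, and then invoking Gronwall's inequality yields, uniformly in $n$, bounds on $\E\int_0^T\norm{X^{(n)}(s)}_V^\alpha\,\rd s$ and on $\sup_{t\le T}\E\norm{X^{(n)}(t)}_H^2$; the Burkholder--Davis--Gundy inequality applied to the martingale part upgrades the latter to a uniform bound on $\E\sup_{t\le T}\norm{X^{(n)}(t)}_H^2$. By (H4), $A(\cdot,X^{(n)})$ is then bounded in $L^{\alpha/(\alpha-1)}([0,T]\times\Omega;V^*)$, and by (H3), $B(\cdot,X^{(n)})$ is bounded in $L^2([0,T]\times\Omega;L_2(U,H))$. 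Reflexivity lets me pass to a subsequence along which $X^{(n)}\rightharpoonup\bar X$ in $L^\alpha([0,T]\times\Omega;V)$, $A(\cdot,X^{(n)})\rightharpoonup Y$ in $L^{\alpha/(\alpha-1)}([0,T]\times\Omega;V^*)$ and $B(\cdot,X^{(n)})\rightharpoonup Z$ in $L^2([0,T]\times\Omega;L_2(U,H))$; a standard argument (testing against processes of the form $\mathbf{1}_{[0,t]}\psi e_j$ with $\psi$ bounded and $\F_t$-measurable) then shows that
\[
 X(t) := X_0 + \int_0^t Y(s)\,\rd s + \int_0^t Z(s)\,\rd W(s)
\]
is a continuous $H$-valued $(\F_t)$-adapted process whose $\rd t\otimes\P$-class equals $\bar X$, and that $X^{(n)}(T)\rightharpoonup X(T)$ weakly in $L^2(\Omega;H)$.

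The heart of the argument, and the step I expect to be the main obstacle, is the identification $Y = A(\cdot,\bar X)$ and $Z = B(\cdot,\bar X)$ via the monotonicity trick of Minty and Browder. For $\phi\in L^\alpha([0,T]\times\Omega;V)\cap L^\infty([0,T]\times\Omega;H)$ the weak monotonicity hypothesis (H2) gives
\begin{align*}
 \E\int_0^T e^{-cs}\Big( 2\inner{A(s,\phi) - A(s,X^{(n)})}{\phi - X^{(n)}} & + \norm{B(s,\phi) - B(s,X^{(n)})}_{L_2(U,H)}^2 \\
 & - c\norm{\phi - X^{(n)}}_H^2 \Big)\,\rd s \le 0 .
\end{align*}
Expanding the cross terms, substituting the It\^o formula for $e^{-cT}\norm{X^{(n)}(T)}_H^2$ to rewrite the ``energy'' contribution $\E\int_0^T e^{-cs}\big(2\inner{A(s,X^{(n)})}{X^{(n)}} + \norm{P_n B(s,X^{(n)})}_{L_2(U,H)}^2\big)\,\rd s$ (the weight $e^{-cs}$ being precisely what converts the remaining quadratic-in-$X^{(n)}$ terms into affine ones that survive the weak limit), and then using weak lower semicontinuity of the norm, $P_n X_0\to X_0$ in $L^2(\Omega;H)$ and the three weak convergences above, I obtain in the limit
\begin{align*}
 \E\int_0^T e^{-cs}\Big( 2\inner{A(s,\phi) - Y(s)}{\phi - \bar X(s)} & + \norm{B(s,\phi) - Z(s)}_{L_2(U,H)}^2 \\
 & - c\norm{\bar X(s) - \phi(s)}_H^2 \Big)\,\rd s \le 0
\end{align*}
for every such $\phi$. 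Taking $\phi = \bar X$ forces $Z = B(\cdot,\bar X)$; then taking $\phi = \bar X + \lambda v$ with $v\in L^\infty([0,T]\times\Omega;V)$ and $\lambda>0$, dividing by $\lambda$ and letting $\lambda\downarrow0$ --- using (H2) once more to see that $\lambda^{-1}\norm{B(s,\bar X + \lambda v) - B(s,\bar X)}_{L_2(U,H)}^2\to 0$, hemicontinuity (H1) for the pointwise limit of $\inner{A(s,\bar X + \lambda v)}{v}$, and (H4) to dominate the integrand --- gives $\E\int_0^T e^{-cs}\inner{A(s,\bar X(s)) - Y(s)}{v(s)}\,\rd s \le 0$; replacing $v$ by $-v$ yields equality, whence $Y = A(\cdot,\bar X)$. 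Keeping the integrability, measurability and domination bookkeeping under control throughout this limiting procedure --- where all four hypotheses (H1)--(H4) are used in concert --- is the main technical difficulty.

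Finally, uniqueness follows directly from (H2): if $X$ and $X'$ are two solutions, It\^o's formula for $e^{-ct}\norm{X(t) - X'(t)}_H^2$ together with (H2) gives $\E[e^{-ct}\norm{X(t) - X'(t)}_H^2]\le 0$ for every $t\in[0,T]$, so $X = X'$. The estimate $\E[\sup_{t\le T}\norm{X(t)}_H^2] < \infty$ is then obtained exactly as the a priori bound of the second paragraph --- It\^o's formula for $\norm{X(t)}_H^2$ combined with (H3), Burkholder--Davis--Gundy and Gronwall --- now applied directly to the solution $X$.
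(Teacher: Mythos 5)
Your proposal is the standard Krylov--Rozovskii/Pr\'ev\^ot--R\"ockner argument (Galerkin approximation, a priori bounds via It\^o's formula, (H3), Burkholder--Davis--Gundy and Gronwall, weak compactness, identification of the limit by the exponentially weighted monotonicity trick using (H1), (H2) and (H4), and uniqueness from (H2)), which is precisely the proof the paper defers to in \cite{prevot2007} and \cite{krylov1979} rather than reproducing. The outline is correct and matches that reference in all essential respects, up to minor bookkeeping (e.g.\ the class of test processes $\phi$ should be $L^{\alpha}([0,T]\times\Omega;V)\cap L^{2}([0,T]\times\Omega;H)$ so that $\phi=\bar X$ is admissible).
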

\subsection{An example}
\label{Section:AnExample}
In the following we give a concrete example of an SPDE on $\Lambda \subset \R^{n}$
where $\Lambda$ is open and the boundary $\pd \Lambda$ is sufficiently smooth for the required Soblev embeddings. The following example is taken from \cite{prevot2007}, and the reader is referred to this text (pp. 59-74) for further examples.

Let $A = \Delta := \sum_{i=1}^{n}{\frac{\pd^{2}}{\pd x^{2}_{i}}}$ be the Laplacian, with domain 
\[
 C^{\infty}_{0}(\Lambda):= \{ u \in C^{\infty}(\Lambda) \, \colon \, \mathrm{supp}(u) \mathrm{\,\, is \, \, compact} \},
\]
where $\mathrm{supp}(u) = \overline{\{ x \in \Lambda \, \colon \, u(x) \neq 0\}}.$ Recall the Sobolev space (\cite{adams2003}) 
$H^{1}(\Lambda) := \{ u : \Lambda \to \R \, \colon \, u \in L^{2}(\Lambda) , \,\abs{\nabla u} \in L^{2}(\Lambda) \}$ where $\nabla u$ exists in the weak sense, and that $H^{1}_{0}(\Lambda)$ is defined
as the closure of $C^{\infty}_{0}(\Lambda)$ in the norm
\[
 \norm{u}_{H^{1}} := \sqrt{\norm{u}^{2}_{L^{2}} + \norm{\abs{\nabla u}}^{2}_{L^{2}}}.
\]
To save on typesetting, we will abuse notation and write $\norm{\nabla u}^{2}_{L^{2}}$ for $\norm{\abs{\nabla u}}^{2}_{L^{2}}$.

It is well known (\cite{evans1998}) that $\Delta$ has a unique extension from $C^{\infty}_{0}(\Lambda)$ onto $H^{1}_{0}(\Lambda)$. Thus, define $V= H^{1}_{0}(\Lambda)$ and observe that 
$V \subset L^{2}(\Lambda)$ continuously and densely (\cite{evans1998}, Sobolev embedding). Define $H:= L^{2}(\Lambda)$ and identifying $H$ with its dual $H^{*}$ we will consider the Gelfand triple
$V \subset H \subset V^{*}$, or more concretely $ H^{1}_{0}(\Lambda) \subset L^{2}(\Lambda) \subset H^{-1},$ recalling the notation in \cite{evans1998} that $H^{-1} := (H^{1}_{0}(\Lambda))^{*}$.

So we have defined the operator $A$ and the associated Gelfand triple. For the noise, we fix $U$ some abstract separable Hilbert space, and
ask for some Hilbert-Schmidt map from $U$ to $H$. It is not important what $U$ is, for if $i : U \to H$ is Hilbert-Schmidt and time independent, the noise $i \, \rd W$ interpreted as the stochastic integral
$\int_{0}^{t}{i \, \rd W(s)}$ which lies in $H$. We have
\begin{prop}
 \label{prop:HilbertSchmidtMapsAlwaysExist}
Let $U$ and $H$ be fixed separable Hilbert spaces. Then there exists $i : U \to H$ which is Hilbert-Schmidt.
\end{prop}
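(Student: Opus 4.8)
The plan is to construct such an $i$ explicitly, using only the separability of $U$ together with the existence of a single nonzero vector in $H$. First I would dispose of the trivial case $H = \{0\}$, in which the zero operator is (vacuously) Hilbert--Schmidt. Otherwise, fix a unit vector $v \in H$ and, invoking separability of $U$, an orthonormal basis $\{e_{k}\}_{k \in I}$ of $U$ with $I \subseteq \N$.

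Next I would define, for $u \in U$,
\[
 i u := \Big( \sum_{k \in I} 2^{-k} \inner{u}{e_{k}}_{U} \Big) v .
\]
The series $\sum_{k \in I} 2^{-k} \inner{u}{e_{k}}_{U}$ converges absolutely, since $\abs{\inner{u}{e_{k}}_{U}} \leq \norm{u}_{U}$ by Bessel's inequality and $\sum_{k \in I} 2^{-k} < \infty$; hence $i$ is a well-defined map $U \to H$. Linearity is immediate from linearity of each $\inner{\cdot}{e_{k}}_{U}$, and the estimate $\norm{i u}_{H} \leq \norm{u}_{U}$ shows $i \in L(U,H)$.

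Finally I would verify the Hilbert--Schmidt property directly from the definition \eqref{eqn:HilbertSchmidtNorm}, computed with respect to the orthonormal basis $\{e_{k}\}$ of $U$ just chosen. Since $i e_{j} = 2^{-j} v$ and $\norm{v}_{H} = 1$, one obtains
\[
 \norm{i}_{L_{2}(U,H)}^{2} = \sum_{j \in I} \norm{i e_{j}}_{H}^{2} = \sum_{j \in I} 4^{-j} < \infty ,
\]
so $i$ is Hilbert--Schmidt, which proves the proposition. (Note that this $i$ in fact has rank at most one.)

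I do not expect any genuine obstacle here: the statement is a soft consequence of separability, and the argument reduces to prescribing $i$ along an orthonormal basis with square-summable weights. The only points needing a little care are the degenerate case $H = \{0\}$ and the elementary check that the defining series converges and yields a bounded operator; both are routine.
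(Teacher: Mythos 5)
Your proof is correct and follows essentially the same idea as the paper's: prescribe $i$ on an orthonormal basis of $U$ with square-summable weights and sum the resulting norms. The only real difference is that the paper sends $e_{j} \mapsto \frac{1}{j} f_{j}$ for an orthonormal basis $\{f_{j}\}$ of $H$, producing an injective operator with dense range, whereas your $i$ has rank one; both prove the proposition as stated, though the paper's choice yields a less degenerate noise when $i$ is later used as the coefficient of $\rd W$.
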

\begin{proof}
 Let $\{ e_{i} \}_{i \in \N}, \{ f_{i} \}_{i \in \N}$ be orthonormal bases of $U$ and $H$ respectively, which exist as $U$ and $H$ are both separable. Define
\[
 i(u) := \sum_{j \in \N} \frac{1}{j} \inner{u}{e_{j}}_{U} f_{j} \quad u \in U.
\]
Then $i : U \to H$ is Hilbert-Schmidt since
\[
 \inner{i(e_{j})}{i(e_{j})}_{H} = \frac{1}{j^{2}},
\]
which is summable.
\end{proof}
\noindent From this, we will fix $U$ some abstract separable Hilbert space\footnote{Indeed from proposition~\ref{prop:HilbertSchmidtMapsAlwaysExist} one can take $U = H = L^{2}(\Lambda)$}
 and $i : U \to L^{2}(\Lambda)$ Hilbert-Schmidt as constructed in proposition~\ref{prop:HilbertSchmidtMapsAlwaysExist} and consider the SPDE on $\Lambda$ which we will call the ``Stochastic Heat Equation''
\begin{equation}
\begin{aligned}
 \label{eqn:StochasticHeatEquationOnLambda}
\rd X(t) &= \Delta X(t) \, \rd t + i \, \rd W(t)\\
X(0) &= X_{0}
\end{aligned}
\end{equation}
where $X_{0} \in L^{2}(\Omega, \F_{0}, \P; H)$ is given. Note here $(\Omega, \F, \P)$ is a complete probability space and the cylindrical Wiener process $W(t)$, $t \in [0,T]$, is with respect to a normal
filtration $(\F_{t})$.

Note here that $\Delta$ and $i$ do not depend on the probability space $(\Omega, \F, \P)$ and so trivially $i$ is predictable and since it is Hilbert-Schmidt, the stochastic integral
$\int_{0}^{t}{i \, \rd W(s)}$ is well defined.

We have the following

\begin{prop}
 \label{prop:StochasticHeatEquationHasAUniqueSoln}
Suppose that $X_{0} \in L^{2}(\Omega, \F_{0}, \P ; H)$. Then the stochastic heat equation \eqref{eqn:StochasticHeatEquationOnLambda} has a unique solution in the sense of definition~\ref{defn:SolutionToCentralSPDE}.
Furthermore,
\[
 \E \big[ \sup_{t \in [0,T]} \norm{X(t)}^{2}_{H}\big] < \infty.
\]

\end{prop}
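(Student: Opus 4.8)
The plan is to verify that the stochastic heat equation \eqref{eqn:StochasticHeatEquationOnLambda} fits into the abstract framework of Theorem~\ref{thm:ExistenceUniquenessOfSolnToCentralSPDE}, so that existence, uniqueness and the moment bound follow immediately. Here we take $A(t,v) = \Delta v$ (independent of $t$ and $\omega$) and $B(t,v) = i$ (constant in all arguments), with the Gelfand triple $V = H^1_0(\Lambda) \subset H = L^2(\Lambda) \subset H^{-1} = V^*$ and $\alpha = 2$. Since $A$ and $B$ are deterministic and time independent, progressive measurability is trivial, and $X_0 \in L^2(\Omega,\F_0,\P;H)$ is assumed, so it only remains to check hypotheses (H1)--(H4) of Assumption~\ref{Ass:HypothesesOnA&B}.

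First I would record that $A = \Delta : H^1_0(\Lambda) \to H^{-1}$ is linear and bounded; indeed for $u,v \in H^1_0(\Lambda)$ one has $\inner{\Delta u}{v} = -\int_\Lambda \nabla u \cdot \nabla v \,\rd x$, whence $\norm{\Delta u}_{V^*} \le \norm{\nabla u}_{L^2} \le \norm{u}_V$, giving (H4) with $g \equiv 0$, $c_3 = 1$, $\alpha - 1 = 1$. Linearity of $A$ gives (H1) for free, exactly as remarked in the excerpt after Assumption~\ref{Ass:HypothesesOnA&B}. For (H2), since $B$ is constant the term $\norm{B(\cdot,u) - B(\cdot,v)}^2_{L_2(U,H)}$ vanishes, and $2\inner{\Delta(u-v)}{u-v} = -2\norm{\nabla(u-v)}^2_{L^2} \le 0 \le c\,\norm{u-v}^2_H$ with $c = 0$. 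For (H3) (coercivity), compute
\[
2\inner{\Delta v}{v} + \norm{i}^2_{L_2(U,H)} = -2\norm{\nabla v}^2_{L^2} + \norm{i}^2_{L_2(U,H)} = -2\norm{v}^2_V + 2\norm{v}^2_{L^2} + \norm{i}^2_{L_2(U,H)},
\]
so taking $c_1 = 2$, $c_2 = 2$, $\alpha = 2$ and the constant function $f(t) = \norm{i}^2_{L_2(U,H)}$, which is finite since $i$ is Hilbert--Schmidt and hence lies in $L^1([0,T]\times\Omega)$, establishes (H3). (Alternatively one can absorb the $\norm{v}^2_{L^2}$ term and keep $c_1 = 0$, but it is not needed.)

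The main point to be careful about — and the only place any real content enters — is the coercivity estimate (H3): one must use that $\norm{v}^2_V = \norm{v}^2_{L^2} + \norm{\nabla v}^2_{L^2}$ so that $-\norm{\nabla v}^2_{L^2}$ alone does not directly dominate $-c_2\norm{v}^2_V$, which is why the lower-order $\norm{v}^2_{L^2}$ term must be carried in the $c_1\norm{v}^2_H$ slot. (If one instead equips $H^1_0(\Lambda)$ with the equivalent norm $\norm{\nabla v}_{L^2}$, available via the Poincar\'e inequality on $\Lambda$, this subtlety disappears and one gets $c_1 = 0$ directly.) Also worth noting is that $B = i$ being Hilbert--Schmidt is precisely what makes $\norm{B}^2_{L_2(U,H)}$ finite, and Proposition~\ref{prop:HilbertSchmidtMapsAlwaysExist} guarantees such an $i$ exists. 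Once (H1)--(H4) are in hand, Theorem~\ref{thm:ExistenceUniquenessOfSolnToCentralSPDE} yields a unique solution $X$ in the sense of Definition~\ref{defn:SolutionToCentralSPDE} together with $\E[\sup_{t\in[0,T]}\norm{X(t)}^2_H] < \infty$, which is the claim.
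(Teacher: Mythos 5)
Your proposal is correct and follows essentially the same route as the paper: verify (H1)--(H4) for $A=\Delta$, $B=i$ on the Gelfand triple $H^1_0(\Lambda)\subset L^2(\Lambda)\subset H^{-1}$ with $\alpha=2$, then invoke Theorem~\ref{thm:ExistenceUniquenessOfSolnToCentralSPDE}. The only (cosmetic) difference is that in (H2) you take $c=0$ directly rather than invoking the Poincar\'e inequality to get $c=-2/C_p^2$ as the paper does, which is exactly the simplification the paper itself endorses in Remark~\ref{rem:NoNeedToUsePoincare}.
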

\begin{proof}
 From theorem~\ref{thm:ExistenceUniquenessOfSolnToCentralSPDE} it suffices to show that $A := \Delta$ and $B:= i$ satisfy H1 to H4 of assumption~\ref{Ass:HypothesesOnA&B}.
\begin{enumerate}
 \item Since $A$ is linear we see that H1 is satisfied.
 \item To see H2, observe that as $i$ is independent of the solution $X$ we have that for any $u, v \in V$ $\norm{B(\cdot, u) - B(\cdot, v)}_{L_{2}(U,H)} = 0$. Also, by definition of $V$, there
exists $u_{k}, v_{k} \in C^{\infty}_{0}(\Lambda)$ such that $u_{k} \to u$ and $v_{k} \to v$ in $V$. Hence
\begin{align*}
 2\inner{\Delta u - \Delta v}{u - v} &= 2\lim_{k \to \infty} \inner{\Delta u_{k} - \Delta v_{k}}{u_{k} - v_{k}}_{H} \\
                                     &= -2 \lim_{k \to \infty} \norm{\nabla(u_{k} - v_{k})}^{2}_{H} \\
                                     &= -2\norm{\nabla(u-v)}^{2}_{H} \\
                                     &\leq -\frac{2}{C^{2}_{p}} \norm{u-v}^{2}_{H}
\end{align*}
where $C_{p}$ is the Poincar\'{e} constant from the Poincar\'{e} inequality (\cite{adams2003}) which says that there exists $C_{p} > 0$ such that for every $u \in H^{1}_{0}(\Lambda)$
\[
 \norm{u}_{H} \leq C_{p} \norm{\nabla u}_{H}.
\]
Thus H2 is satisfied with $c = -2/C^{2}_{p}$.
\item To see H3, using the same argument as above for $v \in V$
\begin{align*}
 2 \inner{\Delta v}{v} = -2\norm{\nabla v}^{2}_{H} = 2\norm{v}^{2}_{H} - 2\norm{v}^{2}_{V}
\end{align*}
since $\norm{v}^{2}_{V} = \norm{v}^{2}_{H} + \norm{\nabla v}^{2}_{H}$. Since $i$ is Hilbert-Schmidt there exists $k > 0$ such that $\norm{i}_{L_{2}(U,H)} \leq k$, hence
\[
  2 \inner{\Delta v}{v} + \norm{i}^{2}_{L_{2}(U,H)} \leq 2\norm{v}^{2}_{H} - 2\norm{v}^{2}_{V} + k^{2}.
\]
Noting that $k^{2}$ is $(\F_{t})$-adapted and is in $L^{1}([0,T] \times \Omega, \rd t \otimes \P)$, we see that H3 is satisfied with $\alpha = c_{1} = c_{2} = 2$ and $f(t) = k^{2}$.
\item Finally, for H4, if $u, v \in C^{\infty}_{0}(\Lambda)$ then
\begin{align*}
 \abs{\inner{\Delta u}{v}} = \abs{\inner{\Delta u}{v}_{H}} = \abs{\inner{\nabla u}{\nabla v}_{H}} \leq \norm{\nabla u}_{H} \norm{\nabla v}_{H} \leq \norm{u}_{V} \norm{v}_{V}
\end{align*}
which implies that $\norm{\Delta u}_{V^{*}} \leq \norm{u}_{V}$ for every $v \in V$ by a density argument, and so H4 is satisfied with $c_{3} = 1$ and $g(t) = 0$.
\end{enumerate}
Now applying theorem~\ref{thm:ExistenceUniquenessOfSolnToCentralSPDE} we see that \eqref{eqn:StochasticHeatEquationOnLambda} has a unique solution.
\end{proof}
\begin{rem}
\label{rem:NoNeedToUsePoincare}
In item 2 above, we have that $2 \inner{\Delta u - \Delta v}{u-v} = - 2\norm{\nabla (u-v)}^{2}_{H} \leq 0$ and so we could have taken $c = 0$ for H2. Thus, there is no need to use the Poincar\'{e}
inequality. This point will be important later.
\end{rem}

\section{Stochastic partial differential equations on Riemannian manifolds}
\label{Chap:SPDEOnRiemannianManifolds}

\subsection{A brief introduction to Riemannian manifolds}
\label{Section:SPDESOnManifolds:RiemannianManifolds}
In order to define what we mean by SPDEs on Riemannian manifolds, we must first have a working knowledge of the theory of Riemannian manifolds. This is referred to as \emph{Riemannian geometry} in the 
literature.

There are many introductory texts to Riemannian manifolds such as \cite{lee1997, lee2003} and \cite{hebey1996} chapter 1. For a more advanced text in general differential geometry
the reader is directed to \cite{spivak1999}. We first introduce smooth manifolds as in \cite{lee2003}.
\begin{defn}
 \label{defn:SmoothManifold}
We say $\M$ is a \emph{smooth manifold of dimension $n$} if $\M$ is a set and we are given a collection $\{ U_{\alpha}\}_{\alpha}$ of subsets of $\M$ together with an injective map 
$\vphi \, \colon \, U_{\alpha} \to \R^{n}$ for each $\alpha$ such that the following hold:
\begin{enumerate}
 \item For each $\alpha$, the set $\vphi(U_{\alpha})$ is an open subset of $\R^{n}$;
 \item For each $\alpha, \beta$ the sets $\vphi_{\alpha}(U_{\alpha} \cap U_{\beta})$ and $\vphi_{\beta}(U_{\alpha} \cap U_{\beta})$ are open in $\R^{n}$;
 \item Whenever $U_{\alpha} \cap U_{\beta} \neq \varnothing$ the map $\vphi_{\alpha} \circ \vphi^{-1}_{\beta} \, \colon \, \vphi_{\beta}(U_{\alpha} \cap U_{\beta}) \to \vphi_{\alpha}(U_{\alpha} \cap U_{\beta})$
is a diffeomorphism;
 \item Countably many of the sets $U_{\alpha}$ cover $\M$;
 \item For $p \neq q$ where $p, q \in \M$ either there exists $U_{\alpha}$ with $p, q \in U_{\alpha}$ or there exists disjoint $U_{\alpha}, U_{\beta}$ such that $p \in U_{\alpha}$ and $q \in U_{\beta}$.
\end{enumerate}
We say that each $(U_{\alpha}, \vphi_{\alpha})$ is a \emph{smooth chart}; that is $U_{\alpha} \subset \M$ is open and $\vphi_{\alpha} \, \colon \, U_{\alpha} \to  \vphi_{\alpha}(U_{\alpha})$ is a 
homeomorphism.
\end{defn}
We will need some notion of smoothness for functions $f \, \colon \, \M \to \R$. The notion of smoothness for such $f$ is inherited from the notion of smoothness of functions $g \, \colon \, \R^{n} \to \R$.
Precisely:
\begin{defn}
 \label{defn:SmoothFunctionsOnM}
Let $\M$ be a smooth manifold. We say $f \, \colon \, \M \to \R$ is smooth if for every $p \in \M$ there exists a smooth chart $(U, \vphi)$ for $\M$ whose domain contains $p$ and such that
$f \circ \vphi^{-1} : \vphi(U) \to \R$ is smooth on the open subset $\tilde{U}:= \vphi(U) \subset \R^{n}$.

\noindent The set of all such functions will be denoted by $C^{\infty}(\M)$.
\end{defn}

An important observation is that $\M$ is not a vector space in general. For example, if one takes $\M := S^{n-1} := \{ x \in \R^{n} \, \colon \, \norm{x} = 1\}$ then if $x, y \in \M$ then $\norm{x + y} = 2$
and so $x + y \notin \M$. However, to each point $p \in \M$ there is an associated vector space structure. This is referred to as the \emph{tangent space}.
\begin{defn}
 \label{defn:TangentSpace}
Let $\M$ be a smooth manifold and let $p \in \M$. A linear map $X \, \colon \, C^{\infty}(\M) \to \R$ is called a \emph{derivation at $p$} if $X(fg) = f(p) X(g) + g(p) X(f)$ for every $f,g \in C^{\infty}(\M)$.
The set of all such derivations at p is called the \emph{tangent space at $p$} and will be denoted by $T_{p}\M$.
 \end{defn}
\noindent Observe that $T_{p} \M$ is indeed a vector space. Further, it is shown in \cite{lee2003} page 69 that $T_{p}\M$ is an $n$-dimensional vector with basis  
\[
 \left( \frac{\pd}{\pd x^{i}}\bigg|_{p} \right)^{n}_{i=1}
\] where the $x^{i}$ are local coordinates. 

Related to the tangent space is the so called \emph{tangent bundle}.
\begin{defn}
 \label{defn:TangentBundle}
We define the \emph{tangent bundle}, denoted $T\M$ as 
\[
T\M := \bigcup_{p \in \M} T_{p} \M,
\]
noting that this is a disjoint union.
\end{defn}
\noindent This now allows us to define the manifold analogue of a vector field.
\begin{defn}
 \label{defn:VectorField}
A vector field $Y \, \colon \, \M \to T\M$, usually written $p \mapsto Y_{p}$ is such that $Y_{p} \in T_{p} \M$ for each $p \in \M$.

\noindent The set of all such vector fields will be denoted by $C^{\infty}(\M , T\M)$.
\end{defn}

\begin{rem}
 Indeed, since $T_{p}\M$ is a vector space, one has that
\[
 Y_{p} = \sum_{i=1}^{n}{Y^{i}(p) \frac{\pd}{\pd x^{i}} \bigg|_{p}}
\]
where $Y^{i} : U \to \R$ $(1 \leq i \leq n)$ are called the component functions of $Y$ in the given chart $(U, \vphi)$.
\end{rem}

With these constructions, it is natural to define a metric on $T_{p}\M$.
\begin{defn}
 \label{defn:Metric}
Let $g_{p} \, \colon \, T_{p}\M \times T_{p} \M \to \R$ be symmetric and positive definite at each $p \in \M$, which means that $g(u,v) = g(v,u)$ for every $u,v \in T_{p}\M$ and $g(u,u) \geq 0$ for all $u \in T_{p}\M$.
Then $g$ is called a metric on $T_{p}\M$.
\end{defn}
\begin{rem}
 \label{rem:ComponentsOfTheMetricTensor}
Since $g$ is symmetric and positive definite, this leads to a positive definite and symmetric matrix $(g_{ij}) \in \R^{n \times n}$ defined via
\[
 g_{ij} := g( \pd_{i}, \pd_{j}) \quad 1 \leq i, j \leq n
\]
where $\pd_{i} \equiv \frac{\pd}{\pd x^{i}} \big|_{p}$. We refer to $g_{ij}$ as the \emph{components of the metric $g$}.
\end{rem}

\noindent We now have all the theory to define a \emph{Riemannian manifold}.

\begin{defn}
 \label{defn:RiemannianManifold}
A Riemannian manifold is a pair $(\M, g)$ where $\M$ is a smooth manifold and $g$ is a metric.
\end{defn}

\begin{rem}
 \label{rem:MetricsAlwaysExist}
One can show using partitions of unity that given a smooth manifold $\M$ there always exists a metric $g$ on $\M$. The arguments are omitted.
\end{rem}

We will now write $\M$ for $(\M, g)$ and only consider Riemannian manifolds \emph{without} boundary.

In order to define SPDEs on $\M$ we will need to define differential operators on $\M$. Further, to specify function spaces, we need some notion of integration on $\M$. This will ultimately,
in section~\ref{Section:SPDEOnManifolds:SPDEOnARiemannianManifold}, enable us to define Sobolev spaces on $\M$.

A step towards looking at differential operators on $\M$ is the notion of \emph{connection} (\cite{lee1997} page 49).

\begin{defn}
 \label{defn:Connection}
A \emph{connection} on $\M$ is a bilinear map
\begin{align*}
 C^{\infty}(\M, T\M) \times C^{\infty}(\M, T\M) &\longrightarrow C^{\infty}(\M, T\M)\\
(X,Y) &\mapsto \nabla_{X}Y
\end{align*}
such that
\begin{enumerate}
 \item $\nabla_{X}Y$ is linear over $C^{\infty}(\M)$ in $X$, that is 
\[
 \nabla_{fX_{1} + gX_{2}}Y = f \nabla_{X_{1}}Y + g \nabla_{X_{2}}Y \quad \text{for every}\, \,\, f, g \in C^{\infty}(\M);
\]
 \item $\nabla_{X}Y$ is linear over $\R$ in $Y$, that is
\[
 \nabla_{X}(aY_{1} + b Y_{2}) = a \nabla_{X}Y_{1} + b \nabla_{X}Y_{2} \quad \text{for every}\, \,\, a, b \in \R;
\]
 \item $\nabla$ satisfies the following product rule
\[
 \nabla_{X}(fY) = f \nabla_{X}Y + (Xf)Y \quad \text{for every}\, \,\, f \in C^{\infty}(\M) \quad X,Y \in C^{\infty}(\M, T\M). 
\]
\end{enumerate}
\end{defn}
\noindent Analogous to remark~\ref{rem:ComponentsOfTheMetricTensor} letting $X = \pd_{i}$ and $Y = \pd_{j}$ we have
\begin{defn}
 \label{defn:ChristoffelSymbols}
\[
 \nabla_{\pd_{i}} \pd_{j} = \Gamma^{m}_{ij} \pd_{m}
\]
and we refer to $\Gamma^{m}_{ij}$ as the \emph{Christoffel symbol} of the connection $\nabla$.
\end{defn}

We will be considering a special type of connection on $\M$; the \emph{Levi-Cevita} connection.

\begin{thm}[Fundamental theorem of Riemannian geometry]
 \label{thm:FundamentalTheoremOfRiemannianGeometry}
Let $\M$ be a Riemannian manifold. Then there exists a unique connection $\nabla$ on $\M$ that is compatible with the metric $g$ and is torsion free. By this we mean that for every $X,Y,Z \in C^{\infty}(\M, T \M)$
\begin{align*}
 \nabla_{X}g(Y,Z) &= 0 \quad (\text{compatible with the metric})\\
\text{and} \, \, \nabla_{X}Y - \nabla_{Y}X &= [X,Y] \quad (\text{torsion free}).
\end{align*}
Such connection is called the \emph{Levi-Cevita} connection.
\end{thm}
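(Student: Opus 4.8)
The plan is to establish existence and uniqueness together through the \emph{Koszul formula}, which expresses $g(\nabla_X Y, Z)$ entirely in terms of the metric, the three vector fields $X,Y,Z$, and their Lie brackets, thereby removing every reference to $\nabla$ from the right-hand side.

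\textbf{Uniqueness.} First I would assume a connection $\nabla$ with the two stated properties exists, reading the compatibility condition $\nabla_X g(Y,Z) = 0$ in its equivalent unpacked form $X\big(g(Y,Z)\big) = g(\nabla_X Y, Z) + g(Y, \nabla_X Z)$. Writing this out for the three cyclic permutations $X\big(g(Y,Z)\big)$, $Y\big(g(Z,X)\big)$, $Z\big(g(X,Y)\big)$, forming the combination (first)$\,+\,$(second)$\,-\,$(third), and using torsion-freeness $\nabla_A B - \nabla_B A = [A,B]$ to cancel the antisymmetric parts, every $\nabla$-term collapses onto a single copy of $g(\nabla_X Y, Z)$. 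This yields
\begin{align*}
2g(\nabla_X Y, Z) &= X\big(g(Y,Z)\big) + Y\big(g(Z,X)\big) - Z\big(g(X,Y)\big) \\
&\quad - g\big(X,[Y,Z]\big) + g\big(Y,[Z,X]\big) + g\big(Z,[X,Y]\big).
\end{align*}
Since each $g_p$ is positive definite, hence nondegenerate, the right-hand side pins down the vector $\nabla_X Y$ pointwise, so at most one such connection can exist.

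\textbf{Existence.} For fixed $X,Y \in C^{\infty}(\M, T\M)$ I would \emph{define} $\nabla_X Y$ by demanding that $g(\nabla_X Y, Z)$ equal one half of the expression above for every $Z$. To see that this really defines a smooth vector field, one first checks that the right-hand side is $C^{\infty}(\M)$-linear in $Z$ — the only delicate point being that, on substituting $fZ$, the Leibniz-rule contributions from the derivative terms and the $(\cdot f)$-terms produced by the bracket identities $[Y,fZ] = f[Y,Z] + (Yf)Z$ and $[X,fZ] = f[X,Z] + (Xf)Z$ cancel in pairs — and then invokes the nondegeneracy of $g_p$ to produce a unique $\nabla_X Y \in C^{\infty}(\M, T\M)$. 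It remains to verify the axioms of Definition~\ref{defn:Connection} together with the two extra properties: $\R$-bilinearity is immediate; $C^{\infty}(\M)$-linearity in $X$ follows by substituting $hX$ and tracking $[hX,Z] = h[X,Z] - (Zh)X$; the product rule $\nabla_X(fY) = f\nabla_X Y + (Xf)Y$ follows by substituting $fY$, using $[fY,Z] = f[Y,Z] - (Zf)Y$, and observing that the leftover derivative terms assemble into $2(Xf)\,g(Y,Z)$. Torsion-freeness comes from subtracting the Koszul expression for $g(\nabla_Y X, Z)$ from that for $g(\nabla_X Y, Z)$ and watching it telescope to $2g([X,Y],Z)$; metric-compatibility comes from adding the Koszul expressions for $g(\nabla_X Y, Z)$ and $g(Y, \nabla_X Z)$ and watching it telescope to $2X\big(g(Y,Z)\big)$.

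\textbf{Main obstacle.} All the genuine mathematical content sits in the uniqueness computation; once the Koszul formula is available, every other claim is forced. The tedious step is the existence side — above all the $C^{\infty}(\M)$-tensoriality check in $Z$, which is needed even to make sense of $\nabla_X Y$, and the product-rule check in $Y$; both rest only on the elementary identities $[fA,B] = f[A,B] - (Bf)A$ and $A(fh) = f\,Ah + h\,Af$, and demand careful sign bookkeeping rather than any new idea. I would close by noting that in a chart $(U,\vphi)$ the coordinate fields commute, $[\pd_i,\pd_j] = 0$, so the Koszul formula specialises to $\Gamma^{m}_{ij} = \tfrac12\, g^{mk}\big(\pd_i g_{jk} + \pd_j g_{ik} - \pd_k g_{ij}\big)$, recovering the classical expression for the Christoffel symbols of Definition~\ref{defn:ChristoffelSymbols}.
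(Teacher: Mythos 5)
Your proof is correct and is precisely the classical Koszul-formula argument; the paper itself gives no proof but defers to \cite{lee1997}, where exactly this argument (uniqueness via the polarization identity, existence by defining $\nabla_X Y$ through nondegeneracy of $g$, then verifying tensoriality and the connection axioms) is carried out. Your signs in the Koszul formula, the bracket identities, and the resulting expression for $\Gamma^{m}_{ij}$ all check out, so there is nothing to add.
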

\begin{proof}
 The reader is directed to \cite{lee1997} page 68 for the proof.
\end{proof}

We now define some differential operators that will be used. We define the gradient of a function $u \, \colon \, \M \to \R$, denoted $\nabla u$, as having representation in local coordinates 
\[
 (\nabla u)_{i} = \pd_{i} u,
\]
noting that $\abs{\nabla u}^{2} = g^{ij} \pd_{i}u \, \pd_{j}u$ (\cite{hebey1996} page 10) in local coordinates. 
We define the Laplace-Beltrami operator, $\Delta_{\M}$, of a function $u \, \colon \, \M \to \R$ as
\begin{equation}
\label{eqn:LaplaceBeltramiOnM} 
 \Delta_{\M} u := \sum_{k,m=1}^{n}{\frac{1}{\sqrt{\abs{g}}} \pd_{m} \left( \sqrt{\abs{g}} \, g^{mk} \pd_{k}u\right)}
\end{equation}
in local coordinates, where $\abs{g} = \det (g_{ij})$ and $g^{ij}$ is the $(i,j)^{th}$ element of $(g_{ij})^{-1},$ the inverse of $(g_{ij})$.

Finally, for integration, one defines the Riemannian volume element
\[
 \rd \nu(g):= \sqrt{\abs{g}} \, \rd x
\]
where $\rd x$ is the Lebesgue volume element of $\R^{n}$.

The reader should note that we have not mentioned all the aspects of Riemannian geometry and in particular we have not mentioned curvature. We will not mention the various types of curvature
one can define on $\M$ but refer the interested reader to \cite{lee1997}.

In the next section we will introduce Sobolev spaces on $\M$ and give the precise assumptions that we will employ on $\M$. This will setup the theory needed to define SPDEs on $\M$.

\subsection{Formulation of a stochastic partial differential equation on a Riemannian manifold}
\label{Section:SPDEOnManifolds:SPDEOnARiemannianManifold}

The abstract theory of chapter 2 and the preceeding theory of Riemannian manifolds will now allow us to consider SPDEs on $\M$. Analogously to section~\ref{Section:AnExample}, in order to define
what we mean by a SPDE on a Riemannian manifold $\M$, one needs to identify the differential operators acting on real-valued functions defined on $\M$ and the appropriate Gelfand triple.

 Essentially the only hard work one needs to worry about is whether or not the Sobolev embeddings that hold on an open subset of $\R^{n}$ (with a sufficiently smooth boundary), also hold on
$\M$. 

The topic of Sobolev embeddings on $\M$ is far from trivial. It turns out that many of the Sobolev embeddings that hold on $\R^{n}$ are simply false on a general Riemannian manifold. Two useful
texts for Sobolev spaces on Riemannian manifolds are \cite{hebey1996, hebey2000}, but the work in this area arguably dates back to \cite{aubin1970}.

For technical reasons, we employ
\begin{ass}
\label{Ass:AssumptionsOnRiemannianManifold}

\begin{center}
\textbf{$\M$ is a compact Riemannian manifold of dimension $ 1 \leq n < \infty$ which is connected, oriented and without boundary. }
\end{center}
\end{ass}
\noindent Such an example of $\M$ is $S^{n-1} := \{ x \in \R^{n} \, \colon \, \norm{x} = 1  \}$. Inspired by section~\ref{Section:AnExample} we have the following.

\begin{defn}
 \label{defn:SPDEOnARiemannianManifold}
Let $H_{\M}$ be a separable Hilbert space of functions defined over $\M$ and suppose $V_{\M}$ is a separable Banach space of functions, also defined over $\M$, such that $V_{\M} \subset H_{\M}$ continuously and densely.
Let

\begin{align*}
 A_{\M} \, \colon \, [0,T] \times \Omega \times V_{\M} &\longrightarrow V^{*}_{\M} \\
B_{\M} \, \colon \, [0,T] \times \Omega \times V_{\M} &\longrightarrow L_{2}(U, H_{\M})
\end{align*}
be progressively measurable, where $U$ is a fixed separable Hilbert space and $A_{\M}$ is a differential operator on $\M$. Then the equation
\begin{equation}
 \label{eqn:SPDEOnM}
\begin{aligned}
 \rd X(t) &= A_{\M}(t, X(t)) \, \rd t + B_{\M}(t, X(t)) \, \rd W(t) \\
      X(0)&= X_{0}
\end{aligned}
\end{equation}
where $W(t)$, $t \in [0,T]$, is a $U$-valued cylindrical $Q$-Wiener process with $Q = I$ is called a \emph{stochastic partial differential equation on $\M$.}
\end{defn}

We employ assumption~\ref{Ass:HypothesesOnA&B} on $A_{\M}$ and $B_{\M}$ and so the way we define what we mean by a solution to \eqref{eqn:SPDEOnM} is
\begin{defn}
 \label{defn:SolnToSPDEOnM}
A continuous $H_{\M}$-valued $(\F_{t})$-adapted process $X(t)$, $t \in [0,T]$, is called a solution of \eqref{eqn:SPDEOnM}, if for its $\rd t \otimes \P$-equivalence class $\hat{X}$ we have
$\hat{X} \in L^{\alpha}([0,T] \times \Omega, \rd t \otimes \P; V_{\M}) \cap L^{2}([0,T], \Omega, \rd t \otimes \P; H_{\M})$ with $\alpha$ as in H3 and $\P$-a.s
\[
 X(t) = X(0) + \int_{0}^{t}{A_{\M}(s, \bar{X}(s))} \, \rd s + \int_{0}^{t}{B_{\M}(s, \bar{X}(s)) \, \rd W(s)}, t \in [0,T],
\]
where $\bar{X}$ is any $V_{\M}$-valued progressively measurable $\rd t \otimes \P$-version of $\hat{X}$.
\end{defn}
\noindent This is completely analogous to definition~\ref{defn:SolutionToCentralSPDE} replacing $V, H, A$ and $B$ with $V_{\M},$ $ H_{\M}, A_{\M}$ and $B_{\M}$ respectively and we immediately 
have from theorem~\ref{thm:ExistenceUniquenessOfSolnToCentralSPDE}:
\begin{thm}
 \label{thm:ExistenceUniquenessToSPDEonM}
Let $A_{\M}$ and $B_{\M}$ satisfy assumption~\ref{Ass:HypothesesOnA&B} and suppose $X_{0} \in L^{2}(\Omega, \F_{0}, \P; H_{\M})$. Then there exists a unique solution to \eqref{eqn:SPDEOnM} in the
sense of definition~\ref{defn:SolnToSPDEOnM}. Moreover,
\[
 \E \Big[ \sup_{t \in [0,T]} \norm{X(t)}^{2}_{H_{\M}}\Big] < \infty.
\]
\end{thm}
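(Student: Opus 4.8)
The plan is to recognise that \eqref{eqn:SPDEOnM} is literally an instance of the abstract equation \eqref{eqn:CentralSPDE}, so that Theorem~\ref{thm:ExistenceUniquenessOfSolnToCentralSPDE} applies verbatim once the abstract data are matched up. First I would set $V := V_{\M}$, $H := H_{\M}$, $A := A_{\M}$ and $B := B_{\M}$. Since $V_{\M}$ is by hypothesis a separable Banach space, $H_{\M}$ a separable Hilbert space, and $V_{\M} \subset H_{\M}$ continuously and densely, the construction recalled in Section~\ref{section:GeneralSetting:Notation} produces the Gelfand triple $V_{\M} \subset H_{\M} \subset V^{*}_{\M}$ (identifying $H_{\M}$ with its dual via the Riesz isomorphism). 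The maps $A_{\M}$, $B_{\M}$ are assumed progressively measurable into $V^{*}_{\M}$ and $L_{2}(U,H_{\M})$ respectively, and the cylindrical $Q$-Wiener process with $Q = I$ driving \eqref{eqn:SPDEOnM} is exactly the noise appearing in \eqref{eqn:CentralSPDE}; the fact that $A_{\M}$ happens to be a differential operator on $\M$ plays no role in the argument.

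Next I would check the hypotheses of Theorem~\ref{thm:ExistenceUniquenessOfSolnToCentralSPDE}: these are precisely (H1)--(H4) of Assumption~\ref{Ass:HypothesesOnA&B}, which we have imposed on $A_{\M}$ and $B_{\M}$, together with $X_{0} \in L^{2}(\Omega, \F_{0}, \P; H)$, which here reads $X_{0} \in L^{2}(\Omega, \F_{0}, \P; H_{\M})$ and is assumed. Finally I would observe that Definition~\ref{defn:SolnToSPDEOnM} of a solution of \eqref{eqn:SPDEOnM} is word-for-word Definition~\ref{defn:SolutionToCentralSPDE} with $(V, H, A, B)$ replaced by $(V_{\M}, H_{\M}, A_{\M}, B_{\M})$, so the two notions of solution coincide. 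Hence Theorem~\ref{thm:ExistenceUniquenessOfSolnToCentralSPDE} yields a unique solution $X$ together with the bound $\E[\sup_{t \in [0,T]} \norm{X(t)}^{2}_{H}] < \infty$, which is exactly the claimed estimate with $H = H_{\M}$.

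There is essentially no obstacle here: the statement is a direct corollary of the abstract theorem, and the only point that genuinely needs a (trivial) verification is that the hypotheses on $V_{\M}$ and $H_{\M}$ do indeed place us inside the Gelfand-triple framework of Chapter~\ref{Chap:SPDETheGeneralSetting}. The substantive work lies not in this theorem but in the later sections, where one must exhibit concrete spaces $V_{\M}, H_{\M}$ (Sobolev spaces on $\M$, exploiting the compactness, connectedness and orientability of Assumption~\ref{Ass:AssumptionsOnRiemannianManifold}) and concrete operators $A_{\M}$ (such as the Laplace--Beltrami operator) for which (H1)--(H4) can actually be verified, typically through a Poincar\'e-type inequality on $\M$.
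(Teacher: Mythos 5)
Your proposal is correct and is precisely the paper's own argument: the paper states that Theorem~\ref{thm:ExistenceUniquenessToSPDEonM} follows immediately from Theorem~\ref{thm:ExistenceUniquenessOfSolnToCentralSPDE} by substituting $(V_{\M}, H_{\M}, A_{\M}, B_{\M})$ for $(V,H,A,B)$ in the abstract framework, exactly as you do. Your additional remark that the genuine work lies in verifying (H1)--(H4) for concrete operators in the later sections also matches the paper's own commentary.
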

We see that the abstract theory of SPDEs on $\M$ is a special case of the abstract theory of SPDEs, established in chapter 2. We proceed to show that the abstract objects $V_{\M},$
$H_{\M}, A_{\M}$ and $B_{\M}$ actually exist, by giving two examples.

\subsection{The stochastic heat equation on a Riemannian manifold}
\label{Section:SPDEsOnManifolds:StochasticHeatEquation}
Here we generalise section~\ref{Section:AnExample} to $\M$, where $\M$ satisfies assumption~\ref{Ass:AssumptionsOnRiemannianManifold}. Let $A_{\M} := \Delta_{\M}$, the Laplace-Beltrami operator on $\M$.
Recall from \eqref{eqn:LaplaceBeltramiOnM} that
\[
 \Delta_{\M}u  = \frac{1}{\sqrt{\abs{g}}} \pd_{m} \left(\sqrt{\abs{g}} g^{mk} \pd_{k} u \right)
\]
in local coordinates, where Einstein summation notation is used.

We proceed to define the following Lebesgue and Sobolev spaces as given in \cite{hebey1996} page 10.
\begin{defn}
 \label{defn:LebesgueAndSoblevSpacesOnM}
We define the norms
\begin{align*}
 \norm{u}_{L^{p}(\M)}&:= \left( \int_{\M}\abs{u}^{p} \, \rd \nu(g)\right)^{1/p} \quad 1 \leq p < \infty \\
\norm{u}_{W^{1,p}(\M)}&:= \left( \norm{u}^{p}_{L^{p}(\M)} + \norm{\nabla u}^{p}_{L^{p}(\M)} \right)^{1/p} \quad 1 \leq p < \infty
\end{align*}
where $\norm{\nabla u}_{L^{p}(\M)} \equiv \norm{\abs{\nabla u}}_{L^{p}(\M)}$ and $\nabla u$ is the covariant derivative of $u$ with $(\nabla u)_{i} = \pd_{i} u$ in local coordinates.

We define, for $1 \leq p < \infty$ the spaces
\begin{align*}
 L^{p}(\M) &:= \overline{ \{ u \in C^{\infty}(\M) \, \colon \, \norm{u}_{L^{p}(\M)} < \infty\} }^{\norm{\, \cdot \,}_{L^{p}(\M)}} \\
 W^{1,p}(\M) &:= \overline{ \{ u \in C^{\infty}(\M) \, \colon \, \norm{u}_{W^{1,p}(\M)} < \infty\} }^{\norm{\, \cdot \,}_{W^{1,p}(\M)}} \\
 W^{1,p}_{0}(\M) &:= \overline{ \{ u \in C_{c}^{\infty}(\M) \} }^{\norm{\, \cdot \,}_{W^{1,p}(\M)}}
\end{align*}
where $C^{\infty}_{c}(\M)$ is the space of $C^{\infty}(\M)$ functions with compact support. For $p=2$ we use the notation of
\begin{align*}
 H^{1}(\M) &= W^{1,2}(\M)\\
 H^{1}_{0}(\M) &= W^{1,2}_{0}(\M).
\end{align*}
The notation of $\overline{C}^{\norm{\, \cdot \,}_{D}}$ means \emph{the completion of space $C$ with respect to the $D$-norm}.
\end{defn}

We proceed to briefly discuss Sobolev embeddings for the above spaces. We follow \cite{hebey1996, hebey2000} for the following discussion.

Recall from when $\Lambda$ is an open and bounded subset of $\R^{n}$ that $H^{1}_{0}(\Lambda) \neq H^{1}(\Lambda)$ for non-zero constant functions are in $H^{1}(\Lambda)$ but not in $H^{1}_{0}(\Lambda)$.
However, when $\M$ is complete (as in our case) we have that (\cite{hebey1996}, theorem 2.7)
\[
 W^{1, p}_{0}(\M) = W^{1,p}(\M) \quad \mathrm{for \, \, all} \, \, p \geq 1.
\]
Thus in our case we have $H^{1}_{0}(\M) = H^{1}(\M)$.

Furthermore, the Rellich-Kondrakov theorem for open bounded subsets of $\R^{n}$ (\cite{adams2003}) is generalised to the $\M$ that we are considering via (\cite{hebey2000} theorem 2.9)
\begin{thm}
 \label{thm:RellichKondrakovOnM}
Let $\M$ be a Riemannian manifold satisfying assumption~\ref{Ass:AssumptionsOnRiemannianManifold}. 
\begin{enumerate}[(i)] 
 \item For any $q \in [0, n)$ and any $p \geq 1$ such that $1/p > 1/q - 1/n$ the embedding of $W^{1,q}(\M)$ in $L^{p}(\M)$ is compact.
 \item For any $q > n$, the embedding of $W^{1,q}(\M)$ in $C^{0}(\M)$ is compact.
\end{enumerate}
\end{thm}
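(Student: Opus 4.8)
The plan is to reduce both assertions to the classical Rellich--Kondrakov and Morrey embedding theorems on bounded domains of $\R^n$, by means of a finite atlas and a subordinate partition of unity; the compactness of $\M$ enters in an essential way.

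First, since $\M$ is compact I would fix a finite atlas $\{(U_i, \varphi_i)\}_{i=1}^N$ with each $\Omega_i := \varphi_i(U_i)$ a bounded open subset of $\R^n$ with smooth boundary, chosen so that on each $U_i$ the metric components $(g_{jk})$, the inverse components $(g^{jk})$ and $\sqrt{|g|}$ are bounded above and below by positive constants that are uniform in $i$; this is possible because $\M$ is compact and $g$ is smooth. Fix also a smooth partition of unity $\{\eta_i\}_{i=1}^N$ subordinate to $\{U_i\}$, so $\sum_i \eta_i \equiv 1$ and each $\eta_i$ and $|\nabla \eta_i|$ is bounded. The key elementary fact, which I would record as a lemma, is that for $u$ supported in $U_i$ its pushforward $\tilde u := u \circ \varphi_i^{-1}$ satisfies $c^{-1} \|\tilde u\|_{W^{1,q}(\Omega_i)} \le \|u\|_{W^{1,q}(\M)} \le c \|\tilde u\|_{W^{1,q}(\Omega_i)}$ with $c$ depending only on the uniform bounds above; this is immediate from $\rd\nu(g) = \sqrt{|g|}\,\rd x$, from $|\nabla u|^2 = g^{jk} \partial_j u \, \partial_k u$ in local coordinates, and from the chain rule. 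Together with the Leibniz rule $\nabla(\eta_i u) = \eta_i \nabla u + u\,\nabla \eta_i$ this yields the uniform bound $\|\eta_i u\|_{W^{1,q}(\M)} \le C \|u\|_{W^{1,q}(\M)}$.

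Now let $(u_m)$ be a bounded sequence in $W^{1,q}(\M)$; by density of $C^\infty(\M)$ in $W^{1,q}(\M)$ I may take each $u_m$ smooth. For fixed $i$ the functions $v^i_m := (\eta_i u_m) \circ \varphi_i^{-1}$ are supported in a fixed compact subset of $\Omega_i$ and are bounded in $W^{1,q}(\Omega_i)$ by the previous paragraph. In case (i) the relation $1/p > 1/q - 1/n$ is exactly the hypothesis under which the Euclidean Rellich--Kondrakov theorem (\cite{adams2003}) gives compactness of $W^{1,q}(\Omega_i) \hookrightarrow L^p(\Omega_i)$; in case (ii), where $q > n$, Morrey's inequality and the Arzel\`{a}--Ascoli theorem give compactness of $W^{1,q}(\Omega_i) \hookrightarrow C^0(\overline{\Omega_i})$. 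In either case I pass to a subsequence along which $v^i_m$ converges in $L^p(\Omega_i)$, respectively in $C^0(\overline{\Omega_i})$, and then iterate this over $i = 1, \dots, N$ and extract a diagonal subsequence, so that $v^i_m \to v^i$ for every $i$. Pulling back and summing, $u_m = \sum_i \eta_i u_m \to u$ in $L^p(\M)$ (respectively in $C^0(\M)$), where $u := \sum_i (v^i \circ \varphi_i)$, again using the norm equivalence of the lemma. Since every bounded sequence thus has a subsequence converging in the target space, the embedding is compact.

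The chain-rule computations and the bookkeeping of the diagonal argument are routine. The step that requires care is the uniform norm equivalence between $\|\cdot\|_{W^{1,q}(\M)}$, restricted to functions supported in a single chart, and the Euclidean $W^{1,q}$-norm of the pushforward: this is precisely where compactness of $\M$ is indispensable, since it guarantees a finite atlas on which the metric, its inverse, $\sqrt{|g|}$ and the cut-offs $\eta_i$ are all controlled by constants independent of the chart. On a noncompact manifold with degenerating geometry neither this estimate nor the conclusion of the theorem need hold.
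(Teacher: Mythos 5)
Your argument is correct. Note, however, that the paper does not actually prove this theorem: it is quoted from \cite{hebey2000} (Theorem 2.9 there), with the reader referred to that text for the proof. What you have written is essentially the standard textbook proof of the compact case, and it is the right one: localize by a finite atlas of coordinate balls and a subordinate partition of unity, use compactness of $\M$ and smoothness of $g$ to get uniform two-sided bounds on $g_{jk}$, $g^{jk}$ and $\sqrt{|g|}$, establish the chart-wise equivalence of the $W^{1,q}(\M)$-norm with the Euclidean $W^{1,q}$-norm of the pushforward, and then invoke the Euclidean Rellich--Kondrakov theorem (case (i)) or Morrey plus Arzel\`a--Ascoli (case (ii)) on each chart, finishing with a diagonal extraction over the finitely many charts. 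Two cosmetic remarks: the range $q \in [0,n)$ in the statement should of course read $q \in [1,n)$ (a typo inherited from the paper, not a defect of your proof), and the reduction to smooth $u_m$ via density requires replacing $u_m$ by a smooth $\tilde u_m$ with $\|u_m - \tilde u_m\|_{W^{1,q}} \le 1/m$ and noting that convergence of a subsequence of $(\tilde u_m)$ in the target space forces convergence of the corresponding subsequence of $(u_m)$; this is routine but worth a sentence. Your closing observation about where compactness enters, and why the result fails on manifolds with degenerating geometry, is accurate and is exactly the point \cite{hebey1996,hebey2000} emphasize.
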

\begin{rem}
 \label{rem:RellichOnM}
Some comments are needed on theorem~\ref{thm:RellichKondrakovOnM}.
\begin{enumerate}[(i)]
\item First of all, the full generality of the theorem has not been stated. For the general statement and proof the reader is directed to \cite{hebey2000} page 37.
\item From part $(i)$ of the theorem, one can choose $p = q$ to see that $W^{1,q}(\M) \subset \subset L^{q}(\M)$ for every $1 \leq q < n$.
\item From part $(ii)$ of the theorem, we see that $W^{1,q} (\M) \subset \subset L^{q}(\M)$ for any $q > n$. Indeed, this follows as $C^{0}(\M) \subset L^{q}(\M)$ for any $q > n$. Indeed, by using
the arguments of \cite{evans1998} one has that
\begin{equation}
 \label{eqn:CompactEmbeddingForEveryq}
W^{1,q} (\M) \subset \subset L^{q}(\M) \quad \text{for every}\,\,\, 1 \leq q < \infty.
\end{equation}

\end{enumerate}
\end{rem}

Finally, we have that the Poincar\'{e} inequality in an open, bounded subset of $\R^{n}$ (\cite{adams2003}) is generalised to the $\M$ that we are considering via the following theorem.
\begin{thm}
 \label{thm:PoincareOnM}
Let $\M$ be a Riemannian manifold satisfying assumption~\ref{Ass:AssumptionsOnRiemannianManifold} and let $1 \leq q < \infty$. Then there exists $C_{p} = C_{p}(\M, q, n) > 0 $ such that for every 
$u \in W^{1,q}(\M)$
\[
\left( \int_{\M}{\abs{u - \bar{u}}^{q} \, \rd \nu(g)}\right)^{1/q} \leq C_{p} \left( \int_{\M}{\abs{\nabla u}^{q} \, \rd \nu(g)}\right)^{1/q} 
\]
where 
\[
\bar{u} := \frac{1}{\mathrm{Vol}(\M)} \int_{\M} u \, \rd \nu(g).
\]
\end{thm}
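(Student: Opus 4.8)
The plan is to argue by contradiction using the compact Sobolev embedding $W^{1,q}(\M) \subset\subset L^q(\M)$ recorded in \eqref{eqn:CompactEmbeddingForEveryq}, which holds for every $1 \le q < \infty$ as a consequence of Theorem~\ref{thm:RellichKondrakovOnM}. Suppose the claimed inequality holds for no constant $C_p$. Then for each $k \in \N$ there is $u_k \in W^{1,q}(\M)$ with $\|u_k - \bar u_k\|_{L^q(\M)} > k\,\|\nabla u_k\|_{L^q(\M)}$; in particular $u_k$ is not a.e. constant, so I may normalise and set $v_k := (u_k - \bar u_k)/\|u_k - \bar u_k\|_{L^q(\M)}$. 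By construction $\bar v_k = 0$, $\|v_k\|_{L^q(\M)} = 1$ and $\|\nabla v_k\|_{L^q(\M)} < 1/k$, so $\{v_k\}$ is bounded in $W^{1,q}(\M)$.

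Next I would extract, via \eqref{eqn:CompactEmbeddingForEveryq}, a subsequence (not relabelled) converging in $L^q(\M)$ to some $v$. Since $\|\nabla v_k\|_{L^q(\M)} \to 0$, the identity $\|v_j - v_k\|_{W^{1,q}(\M)}^q = \|v_j - v_k\|_{L^q(\M)}^q + \|\nabla v_j - \nabla v_k\|_{L^q(\M)}^q$ shows $\{v_k\}$ is Cauchy in $W^{1,q}(\M)$; by completeness and the continuous embedding into $L^q(\M)$ its limit is $v \in W^{1,q}(\M)$ with $\nabla v = 0$ and $\|v\|_{L^q(\M)} = 1$. Because $\M$ is compact, hence of finite volume, the averaging map $u \mapsto \bar u$ is continuous on $L^q(\M)$, so $\bar v = \lim_k \bar v_k = 0$.

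Finally I would show that $\nabla v = 0$ forces $v$ to be a.e. constant: in each coordinate chart the local representative has vanishing weak gradient on a connected open subset of $\R^n$, hence is a.e. constant there (by mollification, say), and connectedness of $\M$ makes these local constants agree, giving $v \equiv c$ a.e. on $\M$. Then $0 = \bar v = c$, so $v \equiv 0$, contradicting $\|v\|_{L^q(\M)} = 1$; this produces the desired constant $C_p = C_p(\M, q, n)$. \textbf{The main obstacle} is precisely this last step: since $W^{1,q}(\M)$ was introduced as the completion of smooth functions rather than directly via weak derivatives, one must take some care to identify $v$ with a genuine Sobolev function whose gradient vanishes and then conclude constancy through the chart-by-chart argument above, using connectedness of $\M$; the compactness extraction and the continuity of the mean are routine given the results already stated.
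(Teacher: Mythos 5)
Your proposal is correct and follows essentially the same route as the paper's proof: a contradiction argument with the normalised sequence $v_k = (u_k - \bar u_k)/\norm{u_k - \bar u_k}_{L^{q}}$, compactness of the embedding $W^{1,q}(\M) \subset\subset L^{q}(\M)$ from remark~\ref{rem:RellichOnM}, and the conclusion that the limit $v$ has vanishing gradient, hence is constant by connectedness, hence zero since $\bar v = 0$. The extra details you supply (the Cauchy argument in $W^{1,q}(\M)$ identifying the limit as a genuine Sobolev function with $\nabla v = 0$, and the chart-by-chart constancy argument) are points the paper's proof leaves implicit, so your version is if anything slightly more complete.
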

\begin{proof} Fix $1 \leq q < \infty$.  Inspired by the analogous proof in the Euclidean case (\cite{evans1998}), suppose the above is false. Then we can find a sequence $u_{k} \in W^{1,q}(\M)$ such that
\[
\norm{u_{k} - \bar{u}_{k}}_{L^{q}(\M)} > k \norm{\nabla u_{k}}_{L^{q}(\M)}.
\]
Define
\[
 v_{k} := \frac{u_{k} - \bar{u}_{k}}{\norm{u_{k} - \bar{u}_{k}}_{L^{q}}}
\]
then $\norm{v_{k}}_{L^{q}} = 1$ and $\bar{v}_{k} = 0$ for every $k \in \N$. Note that $\norm{\nabla v_{k}}_{L^{q}} \leq 1/k$ and so $(v_{k})$ is a bounded sequence in $W^{1,q}(\M)$. In light of
remark~\ref{rem:RellichOnM}, there exists a subsequence $v_{k_{j}}$ in $W^{1,q}(\M)$ and $v \in L^{q}(\M)$ such that $v_{k_{j}} \to v$ in $L^{q}(\M)$ as $j \to \infty$. Thus, by above $\norm{v}_{L^{q}} = 1$
and $\bar{v} = 0$. Since $\norm{\nabla v_{k}}_{L^{q}} < 1/k$ for every $k \in \N$, we have that $v \in W^{1,q}(\M)$ with $\nabla v = 0$ a.e. Since $\M$ is connected this implies $v$ is constant. Since $\bar{v} = 0 $ and $v$ constant this implies that $v = 0$ and so $\norm{v}_{L^{q}} = 0$
which contradicts the above which says that $\norm{v}_{L^{q}} = 1$.
\end{proof}
\noindent The reader should be aware that the above theorem is only found for $1 \leq q < n$ in \cite{hebey1996, hebey2000}. Inspecting the proof as given in \cite{hebey1996, hebey2000}, it seems as though this restriction of $q$ is due to the method of the proof.

We see immediately that if $u \in H^{1}(\M)$ and $\int_{\M} u \, \rd \nu(g) = 0$ then
\[
 \norm{u}_{L^{2}(\M)} \leq C_{p} \norm{\nabla u}_{L^{2}(\M)}.
\]
However, in light of remark~\ref{rem:NoNeedToUsePoincare}, since we are using the Laplace-Beltrami operator, we will see that we do not need to use Poincar\'{e}, which is advantageous as asking
a function to have 0 integral may not be what is required in a mathematical model.

Now take $V_{\M} := H^{1}(\M)$ and $H_{\M} := L^{2}(\M)$. Subsequently, we drop the subscript $\M$ for the rest of this chapter. Note by definition~\ref{defn:LebesgueAndSoblevSpacesOnM}
we immediately have the following
\begin{prop}
 \label{prop:GelfandTripleForStochasticHeatEquationOnM}
The space $C^{\infty}(\M)$ is a dense subspace of $V$ and $V \subset H$ both continuously and densely. Consequently, identifying  $H^{*}$ with $H$, we have the Gelfand
triple $V \subset H \subset V^{*}$. 
\end{prop}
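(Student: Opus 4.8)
The plan is to read everything off the definitions: Proposition~\ref{prop:GelfandTripleForStochasticHeatEquationOnM} is essentially a bookkeeping statement once Definition~\ref{defn:LebesgueAndSoblevSpacesOnM} and the abstract Gelfand-triple discussion of Section~\ref{section:GeneralSetting:Notation} are in hand, and compactness of $\M$ removes all the potential integrability obstructions.

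First I would observe that, since $\M$ is compact, every $u \in C^{\infty}(\M)$ satisfies $\norm{u}_{W^{1,2}(\M)} < \infty$ (it is the $L^{2}(\M)$-norm of a continuous function on a finite-volume space, and likewise for $\nabla u$). Hence the set $\{u \in C^{\infty}(\M) : \norm{u}_{W^{1,2}(\M)} < \infty\}$ appearing in Definition~\ref{defn:LebesgueAndSoblevSpacesOnM} is all of $C^{\infty}(\M)$, and by that same definition $V = H^{1}(\M)$ is \emph{by construction} the completion of $C^{\infty}(\M)$ in the $W^{1,2}(\M)$-norm; so $C^{\infty}(\M)$ is a dense subspace of $V$. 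The same argument shows $C^{\infty}(\M)$ is dense in $H = L^{2}(\M)$.

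Next I would check the embedding. On the dense subspace $C^{\infty}(\M)$ one has $\norm{u}_{L^{2}(\M)}^{2} \le \norm{u}_{L^{2}(\M)}^{2} + \norm{\nabla u}_{L^{2}(\M)}^{2} = \norm{u}_{W^{1,2}(\M)}^{2}$, so the identity is a $1$-Lipschitz linear map from $(C^{\infty}(\M), \norm{\cdot}_{W^{1,2}(\M)})$ into $L^{2}(\M)$ and therefore extends uniquely to a continuous injection $V \hookrightarrow H$ with $\norm{u}_{H} \le \norm{u}_{V}$; this is the continuous embedding. Density of $V$ in $H$ is then automatic from $C^{\infty}(\M) \subset V \subset H$ together with density of $C^{\infty}(\M)$ in $H$. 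Finally, $V = H^{1}(\M)$ is a Hilbert space, hence reflexive, and $V \subset H$ is continuous and dense, so the construction recalled in Section~\ref{section:GeneralSetting:Notation} applies verbatim: restriction of functionals embeds $H^{*}$ continuously into $V^{*}$, injectively because $V$ is dense in $H$ and densely because $V$ is reflexive, and identifying $H$ with $H^{*}$ via Riesz yields $V \subset H \subset V^{*}$ with the duality pairing extending $\inner{\cdot}{\cdot}_{H}$ on $H \times V$.

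The only step deserving genuine care is the one place where the completion is abstract: verifying that the natural map from the completion $H^{1}(\M)$ into $L^{2}(\M)$ is injective, i.e. that a sequence $(u_{k}) \subset C^{\infty}(\M)$ which is Cauchy in $W^{1,2}(\M)$ and converges to $0$ in $L^{2}(\M)$ also has $\norm{\nabla u_{k}}_{L^{2}(\M)} \to 0$. This is the standard fact that the weak covariant derivative is well defined on the completion (a Meyers--Serrin type statement, obtained here by taking a partition of unity subordinate to a finite atlas and invoking the Euclidean version chart by chart), and it is what makes the phrase ``we immediately have'' legitimate; everything else is definition-chasing.
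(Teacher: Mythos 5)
Your proof is correct and follows exactly the route the paper intends: the paper offers no argument at all, declaring the proposition immediate from Definition~\ref{defn:LebesgueAndSoblevSpacesOnM}, and your write-up simply makes that definition-chasing explicit (compactness of $\M$ gives finiteness of the norms, $V$ is by construction the completion of $C^{\infty}(\M)$, the inequality $\norm{u}_{H}\leq\norm{u}_{V}$ gives the continuous embedding, and the standard reflexivity argument yields the Gelfand triple). Your closing remark about injectivity of the map from the abstract completion into $L^{2}(\M)$ is the one genuinely non-trivial point hiding behind the paper's ``we immediately have,'' and you identify and resolve it correctly.
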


Up to now, we have only commented on the operator $A_{\M}$. For the operator $B_{\M}$, let $U$ be a separable Hilbert space and let $i : U \to H$ be Hilbert--Schmidt. By proposition~\ref{prop:HilbertSchmidtMapsAlwaysExist} such $i$ exists and so we have now formulated
the stochastic heat equation on $\M$ by
\begin{equation}
 \label{eqn:StochasticHeatEquationOnM}
\begin{aligned}
 \rd X(t) &= \Delta_{\M} X(t) \, \rd t + i \, \rd W(t) \\
       X(0)&= X_{0}
\end{aligned}
\end{equation}
where $X_{0} \in L^{2}(\Omega, \F_{0}, \P; H)$ is given. Note here $(\Omega, \F, \P)$ is a complete probability space and the cylindrical Wiener process $W(t)$, $t \in [0,T]$, is with respect to a normal
filtration $(\F_{t})$ analogous to the stochastic heat equation on an open subset of $\R^{n}$
of section~\ref{Section:AnExample}.

The existence and uniqueness of a solution to \eqref{eqn:StochasticHeatEquationOnM} is covered by the following.

\begin{thm}
 \label{thm:ExistenceUniquenessOfSolnToStochasticHeatEquation}
Let $X_{0} \in L^{2}(\Omega, \F_{0}, \P; H)$. Then there exists a unique solution, in the sense of definition~\ref{defn:SolnToSPDEOnM}, to equation \eqref{eqn:StochasticHeatEquationOnM}. Moreover,
\[
 \E \big[ \sup_{t \in [0,T]} \norm{X(t)}^{2}_{H}\big] < \infty.
\]
\end{thm}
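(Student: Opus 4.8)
The plan is to invoke Theorem~\ref{thm:ExistenceUniquenessToSPDEonM} (equivalently Theorem~\ref{thm:ExistenceUniquenessOfSolnToCentralSPDE} transplanted to the Gelfand triple $V = H^1(\M) \subset L^2(\M) = H \subset V^*$ supplied by Proposition~\ref{prop:GelfandTripleForStochasticHeatEquationOnM}), so the only work is to check that $A := \Delta_\M$ and $B := i$ satisfy hypotheses (H1)--(H4) of Assumption~\ref{Ass:HypothesesOnA&B}. This mirrors the Euclidean argument in Proposition~\ref{prop:StochasticHeatEquationHasAUniqueSoln}, with integration by parts on $\R^n$ replaced by the divergence theorem on the closed manifold $\M$.

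First I would record the key identity: for $u, v \in C^\infty(\M)$,
\[
 \inner{\Delta_\M u}{v} = \int_\M (\Delta_\M u)\, v \,\rd\nu(g) = -\int_\M g^{ij}\,\pd_i u\,\pd_j v \,\rd\nu(g) = -\inner{\nabla u}{\nabla v}_H,
\]
which follows from the local-coordinate formula \eqref{eqn:LaplaceBeltramiOnM} for $\Delta_\M$ together with the fact that $\M$ is compact without boundary (so there are no boundary terms), and then extends to all $u,v \in V = H^1(\M)$ by density of $C^\infty(\M)$ in $V$ and continuity of both sides in the $V$-norm. Once this is in hand the four hypotheses are immediate: (H1) holds because $\Delta_\M$ is linear on $V$, as already noted after Assumption~\ref{Ass:HypothesesOnA&B}; for (H2), since $B = i$ does not depend on the solution, $\norm{B(\cdot,u) - B(\cdot,v)}_{L_2(U,H)} = 0$, and $2\inner{\Delta_\M u - \Delta_\M v}{u-v} = -2\norm{\nabla(u-v)}_H^2 \le 0$, so (H2) holds with $c = 0$; for (H3), $2\inner{\Delta_\M v}{v} = -2\norm{\nabla v}_H^2 = 2\norm{v}_H^2 - 2\norm{v}_V^2$ using $\norm{v}_V^2 = \norm{v}_H^2 + \norm{\nabla v}_H^2$, and since $i$ is Hilbert--Schmidt there is $k > 0$ with $\norm{i}_{L_2(U,H)} \le k$, giving (H3) with $\alpha = c_1 = c_2 = 2$ and $f(t) = k^2 \in L^1([0,T]\times\Omega)$; for (H4), $\abs{\inner{\Delta_\M u}{v}} = \abs{\inner{\nabla u}{\nabla v}_H} \le \norm{\nabla u}_H \norm{\nabla v}_H \le \norm{u}_V \norm{v}_V$, so $\norm{\Delta_\M u}_{V^*} \le \norm{u}_V$, i.e. (H4) with $c_3 = 1$, $g(t) = 0$. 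Finally $i$ is deterministic, hence trivially progressively measurable, and the stochastic integral $\int_0^t i\,\rd W(s)$ is well defined because $i$ is Hilbert--Schmidt. Applying Theorem~\ref{thm:ExistenceUniquenessToSPDEonM} then yields the unique solution and the bound $\E[\sup_{t\in[0,T]}\norm{X(t)}_H^2] < \infty$.

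The main obstacle — and the only place any genuine manifold geometry enters — is justifying the integration-by-parts identity and the density extension: one must confirm that the Riemannian volume element $\rd\nu(g) = \sqrt{\abs g}\,\rd x$ makes the divergence theorem valid on $\M$ with no boundary contribution (a standard fact for compact oriented manifolds without boundary, obtained by a partition-of-unity reduction to the local formula \eqref{eqn:LaplaceBeltramiOnM}), and that $C^\infty(\M)$ really is dense in $H^1(\M)$ in the $W^{1,2}$-norm — but this last point is exactly the definition of $H^1(\M)$ adopted in Definition~\ref{defn:LebesgueAndSoblevSpacesOnM} and is recorded in Proposition~\ref{prop:GelfandTripleForStochasticHeatEquationOnM}. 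As in Remark~\ref{rem:NoNeedToUsePoincare}, the Poincar\'e inequality of Theorem~\ref{thm:PoincareOnM} is not needed here since (H2) and (H3) already follow with the bare coercivity $-2\norm{\nabla v}_H^2 = 2\norm{v}_H^2 - 2\norm{v}_V^2$; this is fortunate, as it means the solution need not have zero mean.
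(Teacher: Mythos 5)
Your proof is correct and follows essentially the same route as the paper: verify (H1)--(H4) for $A = \Delta_\M$, $B = i$ via integration by parts on the closed manifold (with the same constants $c = 0$, $\alpha = c_1 = c_2 = 2$, $c_3 = 1$) and then invoke Theorem~\ref{thm:ExistenceUniquenessToSPDEonM}. Your explicit attention to justifying the boundary-free integration by parts and the density of $C^\infty(\M)$ in $H^1(\M)$ is a welcome addition, but the argument is the paper's own.
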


\begin{proof}
 It suffices, by theorem~\ref{thm:ExistenceUniquenessToSPDEonM}, to verify that assumption~\ref{Ass:HypothesesOnA&B} hold for $A:= \Delta_{\M}$ and $B:= i$. To this end
\begin{enumerate}
 \item Since as $A$ is linear H1 is satisfied.
 \item To see H2, observe that as $B$ is independent of the solution $X$ we have that for any $u, v \in V$, $\norm{B(\cdot, u)- B(\cdot, v)}_{L_{2}(U,H)} = 0$. Since $C^{\infty}(\M) $ is dense
in $V$, for $u, v \in V$ arbitrary, there exists $u_{k}, v_{k} \in C^{\infty}(\M)$ such that $u_{k} \to u$ and $v_{k} \to v$ in $V$ as $k \to \infty$. Hence, as $\M$ is without boundary
\begin{align*}
 2 \inner{\Delta_{\M}(u-v)}{u-v} &= 2\lim_{k \to \infty} \inner{\Delta_{M}(u_{k} - v_{k})}{u_{k} - v_{k}}_{H} \\
                                 &= -2 \lim_{k \to \infty} \norm{\nabla (u_{k}- v_{k})}^{2}_{H}  \\
                                 &= -2 \norm{\nabla(u-v)}^{2}_{H} \leq 0
\end{align*}
Thus H2 is satisfied with $c= 0$.
 \item To see H3, using the same argument as above for $v \in V$ one has
\begin{align*}
 2 \inner{\Delta_{\M}v}{v} = - 2\norm{\nabla v}^{2}_{H} = 2\norm{v}^{2}_{H} - 2\norm{v}^{2}_{V}
\end{align*}
since $\norm{v}^{2}_{V} = \norm{v}^{2}_{H} + \norm{\nabla v}^{2}_{H}$. Recall that as $i$ is Hilbert--Schmidt there exists $c_{4} > 0$ such that $\norm{i}_{L_{2}(U,H)} \leq c_{4}$ and so
\begin{align*}
 2 \inner{\Delta_{\M}v}{v} + \norm{i}^{2}_{L_{2}(U,H)} \leq  2\norm{v}^{2}_{H} - 2\norm{v}^{2}_{V} + c^{2}_{4}.
\end{align*}
Noting that $c^{2}_{4}$ is $(\F_{t})$-adapted and is in $L^{1}([0,T] \times \Omega , \rd t \otimes \P)$ we see that H3 is satisfied with $\alpha = c_{1} = c_{2} = 2$ and $f(t) = c^{2}_{4}$.
 \item Finally, for H4 let $u, v \in C^{\infty}(\M) $. Then as $\M$ is without boundary
\begin{align*}
 \abs{\inner{\Delta_{\M}u}{v}} = \abs{\inner{\Delta_{\M}u}{v}_{H}} = \abs{\inner{\nabla u}{\nabla v}_{H}} \leq \norm{\nabla u}_{H} \norm{\nabla v}_{H} \leq \norm{u}_{V} \norm{v}_{V}.
\end{align*}
This implies that $\norm{\Delta_{\M}u}_{V^{*}} \leq \norm{u}_{V}$ for all $u \in V$ by a density argument and so H4 is satisfied with $c_{3} = 1$ and $g(t) =0$.
\end{enumerate}
We now apply theorem~\ref{thm:ExistenceUniquenessToSPDEonM} to see that \eqref{eqn:StochasticHeatEquationOnM} has a unique solution.
\end{proof}

\subsection{A nonlinear stochastic partial differential equation on a Riemannian manifold}
\label{Chapter:SPDEOnRiemannianManifolds:Section:ANonlinearSPDEOnaRiemannianManifold}

Until now, we have only considered linear SPDEs. In this final section, we will look at a specific nonlinear SPDE. We replace the Laplace-Beltrami operator in the stochastic heat equation
with the $p$-Laplace-Beltrami operator where $p > 2$, which generalises example 4.1.9 of \cite{prevot2007} to our manifold $\M$.

Let $\M$ be a Riemannian manifold satisfying assumption~\ref{Ass:AssumptionsOnRiemannianManifold}. Define
\begin{align*}
 V &:= \{ u \in W^{1,p}(\M) \, \colon \, \int_{\M} u \, \rd \nu(g) = 0 \} \\
 H &:= \{ u \in L^{2}(\M) \, \colon \, \int_{\M} u \, \rd \nu(g) = 0 \}
\end{align*}
where $p > 2$ and equip $V$ and $H$ with the $W^{1,p}(\M)$ and $L^{2}(\M)$ norms respectively. We see that $C^{\infty}(\M) \cap V$ is dense in $V$ and $V \subset H$ continuously and densely. 
Hence $V \subset H \subset V^{*}$ is a Gelfand triple. 

Define $A : V \longrightarrow V^{*}$ by
\[
 Au := \mathrm{div}_{\M}(\abs{\nabla u}^{p-2} \nabla u),
\]
by which we mean for given $u \in V$,
\[
 \inner{Au}{v}:=  - \int_{\M} \abs{\nabla u}^{p-2} \inner{\nabla u}{\nabla v}_{g} \, \rd \nu(g) \quad \mathrm{for \, \, every \, \,} v \in V,
\]
where $\abs{\nabla u}^{p} := \left(g^{ij}\pd_{x_{i}}u \pd_{x_{j}}u\right)^{p/2}$ and $\inner{\nabla u}{\nabla v}_{g} := g^{ij}\pd_{x_{i}}u \pd_{x_{j}}v$.

For $u, v \in V$ 
\begin{align*}
 \vert \inner{Au}{v} \vert  \leq \int_{\M} \abs{\nabla u}^{p-1} \abs{\nabla v} \, \rd \nu(g) &\leq \left( \int_{\M} \abs{\nabla u}^{p} \, \rd \nu(g) \right)^{\frac{p-1}{p}}\left( \int_{\M} \abs{\nabla v}^{p} \, \rd \nu(g) \right)^{\frac{1}{p}}\\
                            &\leq \norm{u}^{p-1}_{V} \norm{v}_{V}
\end{align*}
which implies that
\begin{equation}
 \label{eqn:pLaplace-BeltramiBoundedOnV*}
\norm{Au}_{V^{*}} \leq \norm{u}^{p-1}_{V} \quad \mathrm{for \, \, every \, \,} u \in V.
\end{equation}
This shows that $Au$ is a well defined element of $V^{*}$ and is bounded as a map from $V$ to $V^{*}$.

For the noise term, as before fix $U$ a separable Hilbert space and let $W$ be a $U$-valued cylindrical $Q$-Wiener process with $Q = I$. Let $i : U \to H$ be Hilbert-Schmidt, which by 
proposition~\ref{prop:HilbertSchmidtMapsAlwaysExist} always exists.

We now have
\begin{thm}
 \label{thm:ExistenceAndUniquenessTopLaplaceBeltrami}
Let $X_{0} \in L^{2}(\Omega, \F_{0}, \P ; H)$. Then there exists a unique solution to
\begin{align*}
 \rd X(t) &= \mathrm{div}_{\M}(\abs{\nabla X(t)}^{p-2} \nabla X(t)) \, \rd t + i \, \rd W(t) \quad (p > 2) \\
     X(0) &= X_{0},
\end{align*}
in the sense of definition~\ref{defn:SolnToSPDEOnM}. Further,
\[
 \E \big[ \sup_{t \in [0,T]}\norm{X(t)}^{2}_{H} \big] < \infty
\]
\end{thm}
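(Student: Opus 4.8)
The plan is to invoke Theorem~\ref{thm:ExistenceUniquenessToSPDEonM}, so it suffices to verify that $A u := \mathrm{div}_{\M}(\abs{\nabla u}^{p-2} \nabla u)$ and $B := i$ satisfy the four hypotheses (H1)--(H4) of Assumption~\ref{Ass:HypothesesOnA&B} with the Gelfand triple $V \subset H \subset V^{*}$ defined above and with exponent $\alpha = p$. The constant-function issue is handled by working on the subspaces with zero mean, which is why $V$ and $H$ were defined that way; the Poincar\'{e} inequality of Theorem~\ref{thm:PoincareOnM} then gives $\norm{u}_{L^{2}(\M)} \le C_{p} \norm{\nabla u}_{L^{p}(\M)}$ (after a H\"older step to pass from the $L^{q}$ form to what we need), which will be the tool that upgrades coercivity to involve the full $V$-norm. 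Note that (H4) is already essentially done in the excerpt via \eqref{eqn:pLaplace-BeltramiBoundedOnV*}, which reads $\norm{Au}_{V^{*}} \le \norm{u}^{p-1}_{V}$, so (H4) holds with $c_{3}=1$, $g \equiv 0$.

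For the remaining hypotheses I would proceed as follows. For (H1), hemicontinuity, fix $u,v,x \in V$ and observe that $\lambda \mapsto \inner{A(u+\lambda v)}{x} = -\int_{\M} \abs{\nabla u + \lambda \nabla v}^{p-2} \inner{\nabla u + \lambda \nabla v}{\nabla x}_{g}\, \rd \nu(g)$; the integrand is continuous in $\lambda$ pointwise and dominated by an $L^{1}$ function (using $\abs{\nabla u + \lambda \nabla v}^{p-1}\abs{\nabla x}$ bounded uniformly for $\lambda$ in a compact set together with H\"older), so dominated convergence gives continuity. For (H2), weak monotonicity, since $i$ does not depend on the solution the $B$-difference term vanishes, so it reduces to showing $\inner{Au - Av}{u-v} \le 0$, i.e. the monotonicity of the $p$-Laplacian-type operator. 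This follows from the elementary pointwise vector inequality $\inner{\abs{a}^{p-2} a - \abs{b}^{p-2} b}{a - b} \ge 0$ for $a,b \in \R^{n}$ (valid for $p \ge 2$), applied fibrewise with $a = \nabla u$, $b = \nabla v$ in each tangent space with respect to the metric inner product, and then integrated over $\M$; hence (H2) holds with $c = 0$. For (H3), coercivity, compute $2\inner{Av}{v} = -2\int_{\M} \abs{\nabla v}^{p}\, \rd\nu(g) = -2\norm{\nabla v}^{p}_{L^{p}(\M)}$, and since on $V$ we have $\norm{v}^{p}_{V} = \norm{v}^{p}_{L^{p}(\M)} + \norm{\nabla v}^{p}_{L^{p}(\M)}$ with (by Poincar\'{e} on the zero-mean space) $\norm{v}^{p}_{L^{p}(\M)} \le C \norm{\nabla v}^{p}_{L^{p}(\M)}$, we get $-2\norm{\nabla v}^{p}_{L^{p}(\M)} \le -c_{2}\norm{v}^{p}_{V}$ for a suitable $c_{2}>0$; adding $\norm{i}^{2}_{L_{2}(U,H)} \le c_{4}^{2}$ gives (H3) with $\alpha = p$, $c_{1} = 0$, $f(t) = c_{4}^{2} \in L^{1}$.

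The main obstacle I anticipate is twofold. First, the Poincar\'{e} inequality as stated in Theorem~\ref{thm:PoincareOnM} is an $L^{q}$-to-$L^{q}$ gradient estimate, whereas for coercivity I need an $L^{p}$-norm bound on $v$ controlled by the $L^{p}$-norm of $\nabla v$; since $\M$ is compact one can absorb the passage between exponents using H\"older's inequality and the finiteness of $\mathrm{Vol}(\M)$, but one must be slightly careful that the zero-mean condition is genuinely used (a general $L^{p}$ function need not have $\bar u$ equal to its $L^{2}$-mean, though on the subspace $V$ the mean is zero so this is fine). Second, one should double-check that $W^{1,p}(\M) \cap \{\int u = 0\}$ is indeed a closed subspace that embeds continuously and densely into the corresponding zero-mean $L^{2}$ space --- density of $C^{\infty}(\M) \cap V$ follows by subtracting the mean from a smooth approximant, and continuity follows from $p > 2$ together with $\mathrm{Vol}(\M) < \infty$ via H\"older. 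Everything else is routine bookkeeping, and once (H1)--(H4) are in hand Theorem~\ref{thm:ExistenceUniquenessToSPDEonM} delivers both the existence/uniqueness and the moment bound $\E[\sup_{t\in[0,T]}\norm{X(t)}^{2}_{H}] < \infty$.
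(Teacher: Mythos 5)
Your proposal is correct and follows essentially the same route as the paper: verify (H1) by dominated convergence, (H2) by monotonicity of the $p$-Laplacian vector field (the paper derives the pointwise inequality via Cauchy--Schwarz and monotonicity of $s\mapsto s^{q}$ rather than citing it, but it is the same fact), (H3) via the Poincar\'{e} inequality on the zero-mean space with $\alpha=p$ and $c_{1}=0$, and (H4) directly from \eqref{eqn:pLaplace-BeltramiBoundedOnV*}. The only remark worth making is that your anticipated ``H\"older step'' for Poincar\'{e} is unnecessary: Theorem~\ref{thm:PoincareOnM} is stated for all $1\leq q<\infty$, so one simply takes $q=p$ and uses $\bar v=0$ on $V$.
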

\begin{proof}
 As before, it suffices to check that $A$ and $B:= i$ satisfy the hypotheses of H1 to H4 of assumption~\ref{Ass:HypothesesOnA&B}.
\begin{enumerate}
 \item To check H1 it suffices to show that for $u,v,x \in V$ and $\lambda \in \R$ with $\abs{\lambda} \leq 1$
\begin{equation}
\label{eqn:HemiCtyOfA} 
\lim_{\lambda \to 0} \int_{\M} \left( \abs{\nabla(u + \lambda v)}^{p-2} \inner{\nabla(u + \lambda v)}{\nabla x}_{g} - \abs{\nabla u }^{p-2} \inner{\nabla u}{\nabla x}_{g} \right) \, \rd \nu(g) = 0.
\end{equation}
Clearly the integrand converges to zero as $\lambda \to 0$, so we need only find an $L^{1}(\M)$ bounding function (independent of $\lambda$) to use Lebesgue's dominated convergence theorem.

To this end, since $\abs{\lambda} \leq 1$, using Cauchy-Schwartz and the fact that $x \mapsto x^{q}$ is convex for $q \geq 1$ one immediately has
\[
  \abs{\abs{\nabla(u + \lambda v)}^{p-2} \inner{\nabla(u + \lambda v)}{\nabla x}_{g}} \leq 2^{p-2} (\abs{\nabla u}^{p-1} + \abs{\nabla v}^{p-1}) \abs{\nabla x}
\]
and so the integrand is bounded above by
\[
 (2^{p-2} + 1) \abs{\nabla u}^{p-1} \abs{\nabla x} + 2^{p-2} \abs{\nabla v}^{p-1} \abs{\nabla x}
\]
which is clearly in $L^{1}(\M)$ and so applying Lebesgue's dominated convergence theorem, we see that \eqref{eqn:HemiCtyOfA} follows.
\item For H2, since $B$ is independent of the solution, for $u,v \in V$ it follows that $\norm{B(\cdot, u) - B(\cdot, v)}_{L_{2}(U,H)} = 0$. Further, using the Cauchy-Schwartz inequality
 one has
\begin{align*}
 - \inner{Au - Av}{u-v} &= \int_{\M} \inner{\abs{\nabla u}^{p-2} \nabla u - \abs{\nabla v}^{p-2}\nabla v}{\nabla u - \nabla v}_{g} \, \rd \nu(g)\\
                        &\geq \int_{\M} \abs{\nabla u}^{p} - \abs{\nabla u}^{p-1} \abs{\nabla v} - \abs{\nabla v}^{p-1} \abs{\nabla u} + \abs{\nabla v}^{p} \, \rd \nu(g)\\
                        &= \int_{\M} \left( \abs{\nabla u}^{p-1} - \abs{\nabla v}^{p-1}\right) \left(\abs{\nabla u} - \abs{\nabla v} \right)\, \rd \nu(g)\\
                        &\geq 0
\end{align*}
where the last inequality holds since $s \mapsto s^{q}$ is increasing for $s \geq 0$ and $q \geq 1$. Thus H2 holds with $c = 0$.
\item To see H3, using the Poincar\'{e} inequality (theorem~\ref{thm:PoincareOnM}) and the definition of $V$ there exists $C_{p} >0$ such that
\[
 \int_{\M} \abs{\nabla u}^{p} \, \rd \nu(g) \geq C^{-1}_{p} \int_{\M} \abs{u}^{p} \, \rd \nu(g) \quad \mathrm{for \, \, every \,\,} u \in V
\]
and so for all $v \in V$
\begin{align*}
 \inner{Av}{v} &= - \norm{\nabla v }^{p}_{L^{p}(\M)} \leq -C^{-1}_{p} \norm{v}^{p}_{H} + \norm{\nabla v}^{p}_{L^{p}(\M)}  - \norm{\nabla v}^{p}_{L^{p}(\M)}\\
               &\leq \max(-1,-C^{-1}_{p}) \norm{v}^{p}_{V} + \norm{\nabla v}^{p}_{L^{p}(\M)}
\end{align*}
Thus
\begin{align*}
 - \norm{\nabla v }^{p}_{L^{p}(\M)} \leq -\min(1,C^{-1}_{p})\norm{v}^{p}_{V} + \norm{\nabla v}^{p}_{L^{p}(\M)}
\end{align*}
which implies
\[
 \inner{Av}{v} = - \norm{\nabla v }^{p}_{L^{p}(\M)} \leq -\frac{\min(1,C^{-1}_{p})}{2}\norm{v}^{p}_{V}.
\]
Since $i : U \to H$ is Hilbert-Schmidt, there exists $c_{4} \in (0,\infty)$ such that $\norm{i}_{L_{2}(U,H)} \leq c_{4}$, thus
\[
 2\inner{Av}{v} + \norm{i}^{2}_{L_{2}(U,H)} \leq -\min(1,C^{-1}_{p})\norm{v}^{p}_{V} + c^{2}_{4}
\]
which shows that H3 is satisfied with $\alpha = p > 2$, $c_{1} = 0$, $c_{2} = \min(1,C^{-1}_{p}) > 0$ and $f(t) = c^{2}_{4}$.
\item Finally, H4 follows from \eqref{eqn:pLaplace-BeltramiBoundedOnV*}.
\end{enumerate}
Thus applying theorem~\ref{thm:ExistenceUniquenessToSPDEonM} completes the proof.
\end{proof}

\section{Stochastic partial differential equations on moving surfaces}
\label{Chapter:SPDEonMovingSurfaces}
\subsection{The stochastic heat equation on a general moving surface}
\label{Section:SPDEOnMovingSurfaceGeneral:StochasticHeatEquationOnAGeneralMovingSurface}
In order to build up intuition as to what a SPDE on a moving surface should look like, we first consider the deterministic case.
\subsubsection{The deterministic case}
Let $\M(t)$ be a hypersurface for each time $t \in [0,T]$ where $T \in (0, \infty)$ is fixed. We need some notion of what it means to have such an object. Unless otherwise stated, the definitions and proofs are found in \cite{deckelnick2005}.

\begin{defn}
\label{defn:Hypersurface}
 Let $k \in \N$. A subset $\Gamma \subset \R^{n+1}$ is called a $C^{k}$-hypersurface if for each point $x_{0} \in \Gamma$ there exists an open set $U \subset \R^{n+1}$ containing $x_{0}$ and a function $\phi \in C^{k}(U)$ such
that
\[
 U \cap \Gamma = \{ x \in U \, \vert \, \phi(x) = 0 \} \, \, \text{and} \, \, \nabla{\phi}(x) \neq 0 \,\, \text{for every} \,\, x \in U \cap \Gamma.
\]
\end{defn}
\noindent This allows us to define what it means for a function on $\Gamma$ to be differentiable.
\begin{defn}
\label{defn:DifferentiableOnHypersurface}
  Let $\Gamma \subset \R^{n+1}$ be a $C^{1}$-hypersurface, $x \in \Gamma$. A function $f : \Gamma \to \R$ is called differentiable at $x$ if $ f \circ X$ is differentiable at $X^{-1}(x)$ for each parameterisation 
$X : \Theta \to \R^{n + 1}$ of $\Gamma$ with $x \in X(\Theta)$.
\end{defn}
\noindent The following lemma shows us how to interpret the above definition in terms of functions defined on the ambient space.
\begin{lemma}
 \label{lemma:DiffOnAmbientSpace}
Let $\Gamma \subset \R^{n+1}$ be a $C^{1}$-hypersurface with $x \in \Gamma$. A function $f: \Gamma \to \R$ is differentiable at $x$ if and only if there exists an open neighbourhood $U$ in $\R^{n+1}$ and a function
$\tilde{f} : U \to \R$ which is differentiable at $x$ and satisfies $\tilde{f}\vert_{\Gamma \cap U} = f$.
\end{lemma}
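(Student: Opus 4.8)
The plan is to prove the two implications of the equivalence separately: the ``if'' direction is an immediate chain-rule argument, while the ``only if'' direction requires representing $\Gamma$ locally as a graph in order to build the extension $\tilde f$. For the ``if'' direction, suppose $\tilde f : U \to \R$ is differentiable at $x$ and $\tilde f|_{\Gamma \cap U} = f$. Let $X : \Theta \to \R^{n+1}$ be any parameterisation of $\Gamma$ with $x \in X(\Theta)$ and set $p_0 := X^{-1}(x)$. Since $X$ is continuous with $X(p_0) = x \in U$, there is an open $\Theta_0 \subset \Theta$ containing $p_0$ with $X(\Theta_0) \subset U$, and as $X$ maps into $\Gamma$ we actually get $X(\Theta_0) \subset \Gamma \cap U$, so $f \circ X = \tilde f \circ X$ on $\Theta_0$. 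Because $\tilde f$ is differentiable at $x$ and $X$ is differentiable at $p_0$, the chain rule gives that $\tilde f \circ X$, hence $f \circ X$, is differentiable at $p_0$. As $X$ was arbitrary, $f$ is differentiable at $x$ in the sense of Definition~\ref{defn:DifferentiableOnHypersurface}.

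For the ``only if'' direction, the work is to exhibit the extension. Using Definition~\ref{defn:Hypersurface}, choose an open $U_1 \ni x$ and $\phi \in C^1(U_1)$ with $U_1 \cap \Gamma = \{\phi = 0\}$ and $\nabla\phi \neq 0$ on $U_1 \cap \Gamma$. After relabelling coordinates we may assume $\pd_{n+1}\phi(x) \neq 0$, so the implicit function theorem furnishes an open set $\Theta \subset \R^n$, an open set $U \subset U_1$ with $x \in U$, and a $C^1$ function $\psi : \Theta \to \R$ with
\[
\Gamma \cap U = \{ (y,\psi(y)) : y \in \Theta \}.
\]
Then $X(y) := (y,\psi(y))$ is a $C^1$ parameterisation of $\Gamma$ near $x$ with $X^{-1}(x) = x' := (x_1,\dots,x_n)$, and since $f$ is differentiable at $x$, applying the definition to this particular $X$ shows $f \circ X$ is differentiable at $x'$. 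Now define $\tilde f : U \to \R$ by $\tilde f(z) := (f \circ X)(\pi(z))$, where $\pi(z) := (z_1,\dots,z_n)$; as $\pi$ is linear with $\pi(x) = x'$, the composition $\tilde f = (f\circ X)\circ\pi$ is differentiable at $x$ by the chain rule. Finally, for $z \in \Gamma \cap U$ the graph representation gives $z = X(\pi(z))$, whence $\tilde f(z) = (f\circ X)(\pi(z)) = f(X(\pi(z))) = f(z)$, so $\tilde f|_{\Gamma \cap U} = f$, completing the proof.

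I expect the main obstacle to be the bookkeeping around the implicit function theorem: one must shrink $U_1$ to an open set $U$ over which $\Gamma$ is genuinely a graph on $\Theta$, and check that the resulting graph map is admissible as a ``parameterisation'' in the sense used in Definition~\ref{defn:DifferentiableOnHypersurface} (so that differentiability of $f$ at $x$ really does yield differentiability of $f \circ X$ at $x'$). Once the local graph chart is in place, both directions reduce to the chain rule applied to $\pi$, $X$, and $\tilde f$.
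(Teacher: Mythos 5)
Your proof is correct; the paper itself does not prove this lemma but defers to the cited reference (Deckelnick, Dziuk and Elliott, 2005), where the argument is exactly the one you give: the ``if'' direction by the chain rule applied to $\tilde f \circ X$, and the ``only if'' direction by writing $\Gamma$ locally as a graph via the implicit function theorem and extending $f$ constantly along the graph direction through the projection $\pi$. The only bookkeeping point worth making explicit is that $U$ should be chosen with $\pi(U) \subset \Theta$ (e.g.\ a product neighbourhood $\Theta \times I$), so that $\tilde f = (f \circ X) \circ \pi$ is defined on all of $U$.
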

\noindent With the notion of differentiable functions on $\Gamma$ we now define the tangential gradient, which is the form of the differential operator we will be considering.

\begin{defn}
\label{defn:TangentialGradient}
 Let $\Gamma \subset \R^{n+1}$ be a $C^{1}$-hypersurface, $x \in \Gamma$ and $f : \Gamma \to \R$ differentiable at $x$. We define the tangential gradient of $f$ at $x$ by
\[
 \nabla_{\Gamma}f(x):= \nabla \tilde{f}(x) - \left( \nabla \tilde{f}(x) \cdot \nu(x) \right)\nu(x).
\]
Here $\tilde{f}$ is as in lemma \ref{lemma:DiffOnAmbientSpace}, $\nabla$ denotes the usual gradient in $\R^{n+1}$ and $\nu(x)$ is a unit normal at $x$.
\end{defn}
\noindent This leads to the definition of the Laplace--Beltrami operator on $\Gamma(t)$, 
\[
  \Delta_{\Gamma(t)} f := \nabla_{\Gamma(t)} \cdot \nabla_{\Gamma(t)}f.                                                                          
\]

In the following let $X \in C^{2}(\M(0) \times [0,T]; \R^{n+1})$ be a local parameterisation of $\M(t)$, where we assume that $\M(t)$ is compact, connected, without boundary and oriented for every $t \in [0,T]$. We assume that points on $\M(t)$ evolve according to $X_{t}(x, t) = V_{\nu}, \, \, \, x \in \M(0)$ where
$V_{\nu}$ is the velocity in the normal direction and that $X(\cdot, t) \, \colon \, \M(0) \longrightarrow \M(t)$ is a diffeomorphism. We define the Sobolev spaces $H^{1}(\M(0))$ and $L^{2}(\M(0))$ with respective norms
 analogously as given in definition~\ref{defn:LebesgueAndSoblevSpacesOnM}.

Before stating the conservation law and deriving the PDE, we need to define a time derivative that takes into account the evolution of the surface, generalise integration by parts and give the so-called transport theorem.

\begin{defn}
 \label{defn:MaterialDerivative}
Suppose $\Gamma(t)$ is evolving with normal velocity $v_{\nu}$. Define the material velocity field $v := v_{\nu} + v_{\tau}$ where $v_{\tau}$ is the tangential velocity field. The material derivative of a scalar function $f = f(x,t)$
defined on $\mathcal{G}_{T} := \cup_{t \in [0,T]} \Gamma(t) \times \{t\}$ is given as
\[
 \pd^{\bullet}f := \frac{\pd f}{\pd t} + v \cdot \nabla f.
\]
\end{defn}
We now give a generalisation of integration by parts for a hypersurface $\Gamma$, the proof of which is found in \cite{gilbarg2001}.
\begin{thm}
 \label{thm:IBP}
Let $\Gamma$ be a compact $C^{2}$-hypersurface with boundary and $f \in W^{1,1}(\Gamma; \R^{n+1})$. Then
\[
 \int_{\Gamma}{\nabla_{\Gamma} \cdot f} \, \rd \mathcal{H}^{n} = \int_{\Gamma} {f \cdot H \nu} \, \rd \mathcal{H}^{n} + \int_{\pd \Gamma}{f \cdot \nu_{\pd \Gamma}} \, \rd \mathcal{H}^{n-1},
\]
where $H = \nabla_{\Gamma} \cdot \nu$ is the mean curvature and $\nu_{\pd \Gamma}$ is the co-normal.
\end{thm}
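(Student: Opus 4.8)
The plan is to reduce first to the case of a $C^{1}$ vector field by a density argument, and then to split $f$ pointwise into its tangential and normal components: the normal part will generate precisely the mean curvature term $\int_{\Gamma} f \cdot H\nu \, \rd \mathcal{H}^{n}$, while the tangential part is the divergence theorem on the Riemannian manifold $(\Gamma, g)$ with boundary and generates the co-normal boundary integral. Since $\Gamma$ is compact and $C^{2}$, the unit normal $\nu$, the mean curvature $H = \nabla_{\Gamma}\cdot\nu$, and the projection $v \mapsto v - (v\cdot\nu)\nu$ onto the tangent space are all bounded and continuous, so both sides of the claimed identity depend continuously on $f$ in the $W^{1,1}(\Gamma;\R^{n+1})$ topology: the left side and the mean curvature term trivially, and the boundary term via boundedness of the trace operator $W^{1,1}(\Gamma) \to L^{1}(\pd \Gamma)$. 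As $C^{1}(\Gamma;\R^{n+1})$ is dense in $W^{1,1}(\Gamma;\R^{n+1})$, it suffices to prove the formula for $f \in C^{1}$ and pass to the limit.

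Writing $f = f^{\top} + (f\cdot\nu)\nu$ with $f^{\top}$ tangential, I would use the product rule for the tangential divergence together with the fact that $\nabla_{\Gamma}\varphi$ is tangential for every scalar $\varphi$ (by Definition~\ref{defn:TangentialGradient}, so that $\nu \cdot \nabla_{\Gamma}(f\cdot\nu) = 0$) to get the pointwise identity
\[
\nabla_{\Gamma}\cdot\big((f\cdot\nu)\nu\big) = (f\cdot\nu)\,\nabla_{\Gamma}\cdot\nu = (f\cdot\nu)H = f\cdot H\nu .
\]
Integrating this accounts for the first term on the right-hand side, so it remains to prove
\[
\int_{\Gamma} \nabla_{\Gamma}\cdot f^{\top} \, \rd \mathcal{H}^{n} = \int_{\pd \Gamma} f^{\top}\cdot\nu_{\pd \Gamma} \, \rd \mathcal{H}^{n-1},
\]
and since $\nu_{\pd \Gamma}$ is tangent to $\Gamma$ we have $f\cdot\nu_{\pd \Gamma} = f^{\top}\cdot\nu_{\pd \Gamma}$, so this is exactly the boundary term claimed.

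This remaining identity is the divergence theorem for a genuine vector field on $(\Gamma,g)$. I would prove it using a partition of unity subordinate to a finite $C^{2}$-atlas (available by compactness), reducing to the case where $f^{\top}$ is compactly supported in a single chart $X \colon \Theta \to \R^{n+1}$, with $\Theta$ an open subset of $\R^{n}$ for interior charts and of a half-space for boundary charts. Writing $f^{\top} = (f^{\top})^{i}\,\pd_{i}X$, $g_{ij} = \pd_{i}X\cdot\pd_{j}X$, $\abs{g} = \det(g_{ij})$ and $\rd \mathcal{H}^{n} = \sqrt{\abs{g}}\,\rd x$, the crucial computation is the local identity
\[
\sqrt{\abs{g}}\,\big(\nabla_{\Gamma}\cdot f^{\top}\big)\circ X = \pd_{i}\big(\sqrt{\abs{g}}\,(f^{\top})^{i}\big),
\]
which follows by expressing the tangential divergence through the Levi-Cevita connection of $g$ and Jacobi's formula for $\pd_{i}\sqrt{\abs{g}}$. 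Then $\int_{\Gamma} \nabla_{\Gamma}\cdot f^{\top}\, \rd \mathcal{H}^{n}$ becomes $\int_{\Theta} \pd_{i}\big(\sqrt{\abs{g}}(f^{\top})^{i}\big)\,\rd x$, to which the ordinary Gauss--Green theorem on $\Theta$ applies; checking that the co-normal $\nu_{\pd \Gamma}$ is the outward unit normal to $\pd \Gamma$ inside $T\Gamma$ then matches the resulting Euclidean boundary term with $\int_{\pd \Gamma} f^{\top}\cdot\nu_{\pd\Gamma}\,\rd \mathcal{H}^{n-1}$.

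The \emph{main obstacle} is the local-coordinate identity $\sqrt{\abs{g}}\,(\nabla_{\Gamma}\cdot f^{\top}) = \pd_{i}(\sqrt{\abs{g}}(f^{\top})^{i})$ together with the careful bookkeeping of the induced measure and co-normal on $\pd \Gamma$ across charts; the density reduction and the product-rule computation for the normal part are routine by comparison. A secondary technical point is the trace inequality $W^{1,1}(\Gamma)\to L^{1}(\pd \Gamma)$ needed for the limiting argument, which itself reduces via the atlas to the classical trace theorem on a half-space.
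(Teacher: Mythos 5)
The paper offers no proof of Theorem~\ref{thm:IBP}: it is quoted as a known result with the argument deferred to \cite{gilbarg2001}, so what you have written is not a variant of the paper's proof but a self-contained substitute for that citation. Your argument is correct and follows the standard route: the splitting $f = f^{\top} + (f\cdot\nu)\nu$; the observation that $\nabla_{\Gamma}\varphi\cdot\nu = 0$ (immediate from Definition~\ref{defn:TangentialGradient}), so the normal part contributes exactly $\int_{\Gamma}(f\cdot\nu)H\,\rd\mathcal{H}^{n}$; the identity $f\cdot\nu_{\pd\Gamma} = f^{\top}\cdot\nu_{\pd\Gamma}$ because the co-normal is tangent to $\Gamma$; and the reduction of the tangential part to the intrinsic divergence theorem on $(\Gamma,g)$ via a partition of unity. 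The density reduction is also sound: on a compact $C^{2}$ hypersurface $\nu$ is $C^{1}$ and $H$ is $C^{0}$, hence bounded, and the trace $W^{1,1}(\Gamma)\to L^{1}(\pd\Gamma)$ is bounded, so both sides are $W^{1,1}$-continuous in $f$. The one place where your write-up is a plan rather than a proof is the step you yourself flag as the main obstacle: that the extrinsically defined tangential divergence of a \emph{tangential} field agrees in a chart with $\abs{g}^{-1/2}\pd_{i}\bigl(\sqrt{\abs{g}}\,(f^{\top})^{i}\bigr)$. This is true and follows from $\nabla_{\Gamma}\cdot f^{\top} = g^{ij}\,\pd_{j}X\cdot\pd_{i}(f^{\top}\circ X)$ together with $g^{ij}\pd_{j}X\cdot\pd_{i}\pd_{m}X = \Gamma^{i}_{im}$ and Jacobi's formula $\pd_{m}\sqrt{\abs{g}} = \sqrt{\abs{g}}\,\Gamma^{i}_{im}$, but it is the entire content of the local computation and should be written out for the proof to be complete; once it is, the Gauss--Green step in the half-space charts and the identification of the Euclidean boundary normal with $\nu_{\pd\Gamma}$ close the argument as you describe.
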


\noindent This leads us nicely onto the following lemma which is referred to as the transport theorem, whose proof is given in \cite{dziuk2007}.
\begin{lemma}
 \label{lemma:TransportTheorem}
Let $\mathcal{C}(t)$ be an evolving surface portion of $\Gamma(t)$ with normal velocity $v_{\nu}$. Let $v_{\tau}$ be a tangential velocity field on $\mathcal{C}(t)$. Let the boundary $\pd \mathcal{C}(t)$ evolve with the velocity $v = v_{\nu} + v_{\tau}$.
Assume that $f$ is a function such that all the following quantities exist. Then
\[
 \frac{\rd}{\rd t} \int_{\mathcal{C}(t)}f = \int_{\mathcal{C}(t)} \pd^{\bullet} f + f \nabla_{\Gamma} \cdot v.
\]
\end{lemma}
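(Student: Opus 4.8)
The statement to be proved is the transport theorem (Lemma \ref{lemma:TransportTheorem}):
\[
 \frac{\rd}{\rd t} \int_{\mathcal{C}(t)}f = \int_{\mathcal{C}(t)} \pd^{\bullet} f + f \nabla_{\Gamma} \cdot v.
\]

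The plan is to pull everything back to a fixed reference domain via the flow map, differentiate under the integral sign in fixed (Lagrangian) coordinates, and then push forward again. First I would fix a parameterisation $X(\cdot,t)\colon \mathcal{C}(0)\to\mathcal{C}(t)$ generated by the velocity field $v = v_\nu + v_\tau$, i.e. $\partial_t X(p,t) = v(X(p,t),t)$, which exists on the relevant time interval by the ODE theory implicit in the standing assumptions on $\M(t)$. Writing $J(p,t)$ for the Jacobian factor relating the surface measure on $\mathcal{C}(t)$ to the reference measure on $\mathcal{C}(0)$, one has $\int_{\mathcal{C}(t)} f = \int_{\mathcal{C}(0)} f(X(p,t),t)\, J(p,t)\, \rd\mathcal{H}^n(p)$. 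Now the domain of integration is $t$-independent, so I can differentiate under the integral, obtaining two terms: one where the derivative hits $f(X(p,t),t)$, which by the chain rule is exactly $(\partial^\bullet f)(X(p,t),t)$ since $\frac{\rd}{\rd t} f(X(p,t),t) = \partial_t f + v\cdot\nabla f = \partial^\bullet f$; and one where the derivative hits the Jacobian $J$.

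The key computational step — and the main obstacle — is identifying $\partial_t J = (\nabla_\Gamma \cdot v)\, J$, i.e. that the logarithmic rate of change of the area element equals the tangential (surface) divergence of the material velocity. This is the geometric heart of the theorem. I would establish it by working in local coordinates: express $J = \sqrt{\det(g_{ij}(p,t))}$ where $g_{ij} = \partial_i X \cdot \partial_j X$ is the pulled-back metric, use Jacobi's formula $\partial_t \det(g) = \det(g)\, g^{ij}\partial_t g_{ij}$, compute $\partial_t g_{ij} = \partial_i v \cdot \partial_j X + \partial_i X \cdot \partial_j v$ (interchanging $\partial_t$ and $\partial_i$), and then recognise that $\frac12 g^{ij}\partial_t g_{ij} = g^{ij}\,\partial_i X \cdot \partial_j v$, which is precisely the coordinate expression for the tangential divergence $\nabla_\Gamma\cdot v$ evaluated along the surface (the normal-direction contributions drop out because $\partial_j X$ spans the tangent space). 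Care is needed here: $v$ has a normal component $v_\nu$, and its contribution to $\nabla_\Gamma\cdot v$ is $v_\nu \nabla_\Gamma\cdot\nu = v_\nu H$ (the mean-curvature term), so the identity genuinely uses the full material velocity and not just its tangential part.

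Having shown $\partial_t J = (\nabla_\Gamma\cdot v) J$, I would assemble:
\[
 \frac{\rd}{\rd t}\int_{\mathcal{C}(t)} f = \int_{\mathcal{C}(0)} \Big[(\partial^\bullet f)(X(p,t),t) + f(X(p,t),t)\,(\nabla_\Gamma\cdot v)(X(p,t),t)\Big] J(p,t)\, \rd\mathcal{H}^n(p),
\]
and then change variables back to $\mathcal{C}(t)$ to recover $\int_{\mathcal{C}(t)} \partial^\bullet f + f\,\nabla_\Gamma\cdot v$, which is the claim. The differentiation under the integral sign is justified by the hypothesis that "all the following quantities exist" together with the $C^2$ regularity of the parameterisation $X$, which makes $J$ and its $t$-derivative continuous; for a region with moving boundary $\partial\mathcal{C}(t)$ evolving with velocity $v$, no extra boundary term appears because the boundary is transported by the same flow $X$, so the reference domain $\mathcal{C}(0)$ truly is fixed. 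Alternatively, one could cite Theorem \ref{thm:IBP} if one wanted to recast the boundary movement, but the flow-map argument above is self-contained and cleaner.
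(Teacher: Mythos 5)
The paper does not prove this lemma itself; it defers to \cite{dziuk2007}, where the proof is exactly the argument you give: pull back along the flow generated by $v = v_{\nu}+v_{\tau}$, differentiate under the now-fixed integral, and use Jacobi's formula to show that the logarithmic time derivative of the area element $\sqrt{\det(g_{ij})}$ equals $\nabla_{\Gamma}\cdot v$. Your proof is correct and coincides with that standard argument, including the correct observation that the normal component contributes the mean-curvature term $v_{\nu}H$ to $\nabla_{\Gamma}\cdot v$.
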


\noindent We now have all the necessary theory to formulate an advection-diffusion equation from the following conservation law.

Let $u$ be the density of a scalar quantity on $\Gamma(t)$ and suppose there is a surface flux $q$. Consider an arbitrary portion $\mathcal{C}(t)$ of $\Gamma(t)$, which is the image of a portion $\mathcal{C}(0)$ of $\Gamma(0)$,
evolving with the prescribed velocity $v_{\nu}$. The law is that, for every $\mathcal{C}(t)$,
\begin{equation}
 \label{eqn:ConservationLaw}
\frac{\rd}{\rd t} \int_{\mathcal{C}(t)} u = - \int_{\pd \mathcal{C}(t)} q \cdot \nu_{\pd \Gamma}.
\end{equation}
Observing that components of $q$ normal to $\mathcal{C}(t)$ do not contribute to the flux, we may assume that $q$ is a tangent vector. With this assumption, theorem~\ref{thm:IBP}, lemma~\ref{lemma:TransportTheorem}
and assuming $q = u v_{\tau} - \nabla_{\Gamma(t)}u$ one has the PDE
\begin{equation*}
\pd^{\bullet}u + u\nabla_{\Gamma(t)} \cdot v - \Delta_{\Gamma(t)} u = 0.
\end{equation*}

We now take $\Gamma(t) = \M(t)$ and assume for simplicity that $v_{\tau} \equiv 0$. In this case we have that $v = V \nu$ and so $\nabla_{\M(t)} \cdot ( V\nu ) = VH,$ where $H$ is the mean curvature of $\M(t)$.
We now arrive at the following model PDE on $\M(t)$
 
\begin{equation}
\begin{aligned}
 \label{eqn:ModelPDEMovingSurfaceGeneral}
\pd^{\bullet} u + u VH - \Delta_{\M(t)} u &= 0 \\
u(x,0) &= u_{0} \quad x \in \M(0).
\end{aligned}
\end{equation}
For $x \in \M(0)$ define
$w(x,t):= u(X(x,t), t)$. Then $w$ is defined on $\M(0)$ and
\begin{equation}
 \label{eqn:MaterialDerivative=UsualDerivative}
\begin{aligned}
 \frac{\pd w}{\pd t}(x,t) &= \frac{\pd u}{\pd t}(X(x,t),t) + (\nabla u)(X(x,t), t) \cdot X_{t}(x,t) \\ 
                          &= \frac{\pd u}{\pd t}(X(x,t), t) + (\nabla u)(X(x,t), t) \cdot V_{\nu}(X(x,t)) \\
                          &=: \pd^{\bullet} u.              
\end{aligned}
\end{equation}
Further by \cite{deckelnick2005}, letting $y = X(x,t)$ one has
\begin{equation}
 \label{eqn:LaplaceBeltramiGeneralMovingSurface}
\Delta_{\M(t)}u (y) = \frac{1}{\sqrt{\abs{g(x,t)}}} \frac{\pd}{\pd x_{i}} \left( g^{ij}(x,t) \sqrt{\abs{g(x,t)}} \frac{\pd w}{\pd x_{j}} \right)(x,t)
\end{equation}
where  $g_{ij}(x,t) := X_{x_{i}}(x,t) \cdot X_{x_{j}}(x,t)$, $g^{ij}(x,t)$ is the $(i,j)^{th}$ element of the inverse of $g(x,t) := (g_{ij}(x,t))_{ij}$ and $\abs{g(x,t)} := \det g(x,t)$. 
We employ the Einstein summation notation and assume that there exists $k_{1} > 0$ such that $\abs{VH} \leq k_{1}$ for any $(x,t) \in M(t) \times [0,T]$. Note here that $x_{j}$ are not the local
coordinates of $x$, but are the local coordinates of a parameterisation that gives $x$.

Putting all this together, we see that $w$ solves
\begin{equation}
\begin{aligned}
 \label{eqn:PDEOnReferenceSurfaceGeneral}
\frac{\pd w}{\pd t}(x,t) + w(x,t) V(X(x,t))H(X(x,t)) &- \frac{1}{\sqrt{\abs{g(x,t)}}} \frac{\pd}{\pd x_{i}} \left( g^{ij}(x,t) \sqrt{\abs{g(x,t)}} \frac{\pd w}{\pd x_{j}} \right)(x,t) = 0 \\
w(x,0) &= u_{0}
\end{aligned}
\end{equation}
which we solve on $\M(0)$. On solving, we set $u(y,t) := w(X^{-1}(y,t),t)$.

 We will drop the $V(X(x,t))H(X(x,t))$ notation and simply write $VH$ in the following. We see that we have reduced the PDE on a moving surface to a PDE on a fixed surface, $\M(0)$. This will allow us to define the stochastic analogue, but importantly we must define what noise we are
considering.
\begin{rem}
 \label{rem:PropertiesOfMetricTensor}
Since $\det(DX^{-1}(\cdot, t))$ is continuous, bounded and bounded away from $0$ for every $t \in [0,T]$ there exists $a_{1}, b_{1} >0$ such that $a_{1} \leq \det(DX^{-1}(x, t)) \leq b_{1}$ for every 
$(x,t) \in \M(t) \times [0,T]$. By the smoothness of the parameterisation and the compactness of $\M(0) \times [0,T]$ there exists $a_{2} , b_{2} > 0$ such that 
\[
 a_{2} \leq \sqrt{\abs{g(x,t)}} \leq b_{2} \quad \text{for every}\, \,\, (x,t) \in \M(0) \times [0,T].
\]

Furthermore, since $(g_{ij}(x,t))_{ij}$ is positive definite and symmetric for every $(x,t) \in \M(0) \times [0,T]$ it follows that $(g^{ij}(x,t))_{ij}$ is also positive definite and symmetric and
so since $(g^{ij})_{ij}$ contains functions which are continuous and $\M(0) \times [0,T]$ is compact there exists $a_{3}, b_{3} > 0 $ such that
\[
 a_{3} \vert \tilde \nabla v \vert^{2} \leq g^{ij}(x,t) \frac{\pd v}{\pd x_{j}} \frac{\pd v}{\pd x_{i}} \leq b_{3} \vert \tilde \nabla v \vert^{2} \, \, \text{for every}\, \,\, v \in H^{1}(\M(0)), (x,t) \in \M(0) \times [0,T]
\]
where $\vert \tilde \nabla v \vert^{2}$ is notation for $\sum_{i=1}^{n} (\pd_{x_{i}}v)^{2}$ and $\tilde \nabla$ is \emph{not} the gradient on the ambient space.

Finally, by the compactness of $\M(0) \times [0,T]$ there exists $b > 0$ such that
\[
\abs{g^{ij}(x,t)\pd_{x_{j}}u \, \pd_{x_{i}}v} \leq b \vert \tilde \nabla u \vert \, \vert \tilde \nabla v \vert \, \, \text{for every}\, \,\, u, v \in H^{1}(\M(0)), \, (x,t) \in \M(0) \times [0,T].
\]

\end{rem}

\subsubsection{The stochastic case}

Let $H:= L^{2}(\M(0), \rd \nu(g_{0}); \R)$ and fix $U$ a separable Hilbert space. Here $\rd \nu(g_{0})$ is the Riemannian volume element, not to be confused with the unit normal, $\nu$. Define \[H_{g_{t}} := L^{2} (\M(0), \sqrt{\abs{g(\cdot,t)}} \, \rd \nu(g_{0}); \R).\]
By remark~\ref{rem:PropertiesOfMetricTensor} it is easy to see that $H$ and $H_{g_{t}}$ coincide as sets since
\begin{equation}
\label{eqn:EquivalenceOfHandH_gNorms}
 a_{2} \norm{u}^{2}_{H} \leq \norm{u}^{2}_{H_{g_{t}}} \leq b_{2} \norm{u}^{2}_{H}.
\end{equation}
Let $W$ be a $U-$valued cylindrical $Q-$Wiener process with $Q = I$. Let $i \, \colon \, U \to H$ be Hilbert-Schmidt, 
noting that proposition~\ref{prop:HilbertSchmidtMapsAlwaysExist} ensures that such $i$ always exists. Define
\[
 G_{t} : H \longrightarrow L^{2}(\M(t), \rd \nu(g_{t}) ; \R)
\]
by
\[
 (G_{t}f)(x) := f(X^{-1}(x,t))
\]
noting that the following shows that this map is well defined.
\begin{lemma}
 \label{lemma:L^2OfReferenceSurfaceImpliesL^2OfMovingSurface}
Suppose that $v(\cdot) \in H$. Then $v(X^{-1}(\cdot, t), t) \in L^{2}(\M(t), \rd \nu(g_{t}); \R)$ for every $t \in [0,T]$.
\end{lemma}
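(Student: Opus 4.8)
The plan is to reduce the claim to an integral over the fixed surface $\M(0)$ by changing variables through the diffeomorphism $X(\cdot,t)\colon\M(0)\to\M(t)$, and then to control the Jacobian weight that appears using the uniform bounds of remark~\ref{rem:PropertiesOfMetricTensor}.

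First I would record that $v(X^{-1}(\cdot,t),t)$ is measurable on $\M(t)$: this is immediate since $v$ is measurable and $X^{-1}(\cdot,t)$ is a $C^{2}$ diffeomorphism, hence continuous. Next, using the compactness of $\M(0)$, I would fix a finite atlas $\{(U_k,\psi_k)\}$ of $\M(0)$ with a subordinate partition of unity and transport it to $\M(t)$ by setting $\tilde\psi_k:=\psi_k\circ X(\cdot,t)^{-1}$ on $X(U_k,t)$. In these coordinates $X(\cdot,t)$ is represented by the identity, and the pullback of the Riemannian metric of $\M(t)$ has components $g_{ij}(x,t)=X_{x_i}(x,t)\cdot X_{x_j}(x,t)$, exactly the matrix appearing in \eqref{eqn:LaplaceBeltramiGeneralMovingSurface}; consequently $\rd\nu(g_t)$ is represented by $\sqrt{\abs{g(x,t)}}\,\rd x$ in the chart $\tilde\psi_k$, while $\rd\nu(g_0)$ is represented by $\sqrt{\abs{g(x,0)}}\,\rd x$ in $\psi_k$.

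Summing the chart contributions against the partition of unity would then give the identity
\[
\int_{\M(t)}\abs{v(X^{-1}(y,t),t)}^{2}\,\rd\nu(g_t)(y) = \int_{\M(0)}\abs{v(x)}^{2}\,\frac{\sqrt{\abs{g(x,t)}}}{\sqrt{\abs{g(x,0)}}}\,\rd\nu(g_0)(x),
\]
the fraction being a well-defined function on $\M(0)$ since $\sqrt{\abs{g(\cdot,0)}}$ is bounded away from zero. By remark~\ref{rem:PropertiesOfMetricTensor} there are constants $0<a_2\le b_2<\infty$, uniform in $(x,t)\in\M(0)\times[0,T]$ (in particular at $t=0$), with $a_2\le\sqrt{\abs{g(x,t)}}\le b_2$; hence the weight is dominated by $b_2/a_2$ and the right-hand side is at most $(b_2/a_2)\norm{v}_{H}^{2}<\infty$, which is the assertion.

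I expect the one non-routine step to be the change-of-variables identity above, i.e.\ checking that the Riemannian volume form of $\M(t)$ pulls back under $X(\cdot,t)$ to $\sqrt{\abs{g(\cdot,t)}}\,\rd x$ in the transported chart; this is exactly where the standing hypotheses that $X\in C^{2}(\M(0)\times[0,T];\R^{n+1})$ and that $X(\cdot,t)$ is a diffeomorphism are used, and everything after it is an elementary estimate. One could instead quote the general change-of-variables theorem for integrals over Riemannian manifolds under a diffeomorphism, but carrying out the coordinate computation has the merit of displaying precisely the Jacobian weight that remark~\ref{rem:PropertiesOfMetricTensor} is designed to bound.
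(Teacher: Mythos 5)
Your proof is correct and follows essentially the same route as the paper's: change variables through the diffeomorphism $X(\cdot,t)$ and bound the resulting Jacobian/metric-determinant weight by the uniform constants of remark~\ref{rem:PropertiesOfMetricTensor}. The only cosmetic difference is that you work in a chart transported from $\M(0)$, so the weight appears as the single ratio $\sqrt{\abs{g(x,t)}}/\sqrt{\abs{g(x,0)}}$ bounded by $b_{2}/a_{2}$, whereas the paper passes through an intermediate Lebesgue integral over $\R^{n}$ and additionally invokes the bound $a_{1}\leq\det(DX^{-1})$.
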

\begin{proof}
 We see that there exists $c > 0$ such that
\begin{align*}
 c \geq \int_{\M(0)} \abs{v}^{2} \, \rd \nu(g_{0}) 
\end{align*}
and so using remark~\ref{rem:PropertiesOfMetricTensor} we have
\begin{align*}
c \geq \int_{\M(0)} \abs{v}^{2} \, \rd \nu(g_{0})  \geq a_{1}a_{2} \int_{\R^{n}} \abs{v(X^{-1}(y,t),t)}^{2} \, \rd y \geq \frac{a_{1}a_{2}}{b_{2}} \int_{\M(t)} \abs{v(X^{-1}(\cdot, t))}^{2} \, \rd \nu(g_{t})
\end{align*}
which completes the proof.
\end{proof}
\noindent We define the noise on $\M(t)$ by $i\, \rd W^{X^{-1}(\cdot, t)}(t)$ which is defined as

\begin{equation}
\label{eqn:NoiseOnMovingSurfaceGeneral} 
 i \, \rd W^{X^{-1}(\cdot, t)}(t):= G_{t}\, i\, \rd W(t)
\end{equation}
and the above shows that the noise is $L^{2}(\M(t) , \rd \nu(g_{t}) ; \R)$ valued.

\noindent We now define the stochastic analogue of \eqref{eqn:ModelPDEMovingSurfaceGeneral} as
\begin{equation}
\begin{aligned}
 \label{eqn:SPDEonMovingSurfaceGeneral}
\rd^{\bullet} u &= (\Delta_{\M(t)} - VH) u \, \rd t + i \, \rd W^{X^{-1}(\cdot, t)}(t)\\
              u(0) &= u_{0}
\end{aligned}
\end{equation}
which we interpret as solving the following SPDE on $\M(0)$ (in the sense of definition~\ref{defn:SolnToSPDEOnM}) with Gelfand triple $H^{1}(\M(0)) \subset L^{2}(\M(0), \sqrt{\abs{g(\cdot,t)}}\, \rd \nu(g_{0}); \R) \subset (H^{1}(\M(0)))^{*}$
\begin{equation}
\begin{aligned}
 \label{eqn:SPDEOnReferenceSurfaceGeneral}
\rd w &=  \left( \frac{1}{\sqrt{\abs{g(x,t)}}} \frac{\pd}{\pd x_{j}} \left( g^{ij}(x,t) \, \sqrt{\abs{g(x,t)}} \, \frac{\pd w}{\pd x_{i}} \right) - VH \right) w \, \rd t + i \, \rd W(t) \\
w(0) &= u_{0}.
\end{aligned}
\end{equation}
Note here that the operator is in local coordinates here and that $i \, \rd W(t)$ is $H$-valued noise, but by \eqref{eqn:EquivalenceOfHandH_gNorms} we see that it is $H_{g_{t}}$-valued\footnote{Strictly speaking, we should replace $i$ by $\vphi_{t}i$ where $\vphi_{t}: H \to H_{g_{t}}$ is given by $\vphi_{t}f(x) = f(x)/ (\abs{g(x,t)})^{1/4}$. Then $\norm{\vphi_{t}}_{\mathrm{op}} = 1$ and we may consider $i : U \to H_{g_{t}}$ as Hilbert-Schmidt.}.
This gives
\begin{defn}
 \label{defn:SolnToSPDEOnMovingSurface}
Suppose that we can solve equation \eqref{eqn:SPDEOnReferenceSurfaceGeneral}. Call the solution $w$.  Then we define the solution to equation \eqref{eqn:SPDEonMovingSurfaceGeneral}, $u$ by
\[
 u(t, \omega)(y) := w(t, \omega) (X^{-1}(y,t))
\]
where $\omega \in \Omega$, $t \in [0,T]$ and $y \in \M(t)$. Here we adopt the notion of solution to \eqref{eqn:SPDEOnReferenceSurfaceGeneral} in the sense of 
definition~\ref{defn:SolnToSPDEOnM}.
\end{defn}

From the definition of $L^{2}(\M(0))$ it follows that $H^{1}(\M(0)) \subset L^{2}(\M(0))$ continuously and densely and so by the equivalence of the $H$ and $H_{g_{t}}$ norms we
have that $H^{1}(\M(0)) \subset L^{2}(\M(0), \sqrt{\abs{g(\cdot,t)}}\, \rd \nu(g_{0}); \R)$ continuously and densely and so indeed 
\[ 
 H^{1}(\M(0)) \subset L^{2}(\M(0), \sqrt{\abs{g(\cdot,t)}}\, \rd \nu(g_{0})\,; \R) \subset (H^{1}(\M(0)))^{*}
\]
\emph{is} a Gelfand triple.

 For brevity, we
let $V = H^{1}(\M(0))$ and $H_{g} = L^{2}(\M(0), \sqrt{\abs{g(\cdot,t)}}\, \rd \nu(g_{0})\,; \R)$ (so we drop the subscript $t$). The following shows that we can solve \eqref{eqn:SPDEOnReferenceSurfaceGeneral}.

\begin{prop}
 \label{prop:ExistenceAndUniquenessToSolnToSPDEOnReferenceSurfaceGeneral}
Suppose $u_{0} \in L^{2}(\Omega, \F_{0}, \P ; H)$. Then there exists a unique solution of \eqref{eqn:SPDEOnReferenceSurfaceGeneral}, in the sense of definition~\ref{defn:SolnToSPDEOnM}. 
Moreover, the solution $w$ satisfies
\[
 \E \big[ \sup_{t \in [0,T]} \norm{w}^{2}_{H_{g}}\big] < \infty.
\]

\end{prop}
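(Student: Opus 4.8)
The plan is to reduce the claim to an application of Theorem~\ref{thm:ExistenceUniquenessOfSolnToCentralSPDE} by verifying that the operators appearing in \eqref{eqn:SPDEOnReferenceSurfaceGeneral} satisfy the hypotheses H1--H4 of Assumption~\ref{Ass:HypothesesOnA\&B} relative to the Gelfand triple $V = H^{1}(\M(0)) \subset H_{g} \subset V^{*}$. Here the drift operator is time-dependent, namely
\[
 \inner{A(t,w)}{v} := -\int_{\M(0)} g^{ij}(x,t)\,\sqrt{\abs{g(x,t)}}\,\frac{\pd w}{\pd x_{i}}\frac{\pd v}{\pd x_{j}}\,\rd\nu(g_{0}) - \int_{\M(0)} VH\, w\, v\, \sqrt{\abs{g(x,t)}}\,\rd\nu(g_{0}),
\]
(the first term coming from integration by parts, using that $\M(0)$ has no boundary), and $B(t,w) = i$ is the same Hilbert--Schmidt map as before, which is independent of $w$. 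One subtlety to be dealt with first: the inner product on $H_{g}$ is time-dependent, so strictly speaking we are in the framework of a Gelfand triple whose middle-space norm varies with $t$; the equivalence \eqref{eqn:EquivalenceOfHandH_gNorms} with constants $a_{2},b_{2}$ independent of $t$ lets us treat $H_{g}$ as a fixed Hilbert space up to equivalence of norms, and the constants in H2--H4 absorb these factors. It should be remarked that measurability/progressive measurability of $A$ is immediate since $A$ is deterministic and the coefficients $g^{ij}\sqrt{\abs g}$ and $VH$ are continuous in $(x,t)$.

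The verification then proceeds hypothesis by hypothesis, exactly parallel to the proof of Theorem~\ref{thm:ExistenceUniquenessOfSolnToStochasticHeatEquation}, but now tracking the metric-dependent constants from Remark~\ref{rem:PropertiesOfMetricTensor}. For H1, since $A(t,\cdot)$ is linear in $w$, hemicontinuity is automatic as noted in the discussion after Assumption~\ref{Ass:HypothesesOnA\&B}. For H4 (boundedness), I would use the last bound of Remark~\ref{rem:PropertiesOfMetricTensor}, $\abs{g^{ij}\pd_{x_{j}}w\,\pd_{x_{i}}v}\le b\,\abs{\tilde\nabla w}\,\abs{\tilde\nabla v}$, together with $a_{2}\le\sqrt{\abs g}\le b_{2}$ and the bound $\abs{VH}\le k_{1}$, then Cauchy--Schwarz, to get $\norm{A(t,w)}_{V^{*}}\le C\norm{w}_{V}$ with $C$ depending only on $b,b_{2},k_{1}$; so H4 holds with $\alpha=2$, $g\equiv 0$, $c_{3}=C$. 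For H3 (coercivity), testing against $w$ itself gives
\[
 2\inner{A(t,w)}{w} + \norm{i}^{2}_{L_{2}(U,H_{g})}
 \le -2a_{3}a_{2}\norm{\tilde\nabla w}^{2}_{L^{2}} + 2k_{1}b_{2}\norm{w}^{2}_{H} + c_{4}^{2},
\]
using $g^{ij}\pd_{x_{i}}w\,\pd_{x_{j}}w\ge a_{3}\abs{\tilde\nabla w}^{2}$ and $\sqrt{\abs g}\ge a_{2}$; rewriting $\norm{\tilde\nabla w}^{2}_{L^{2}} = \norm{w}^{2}_{V}-\norm{w}^{2}_{H}$ (up to the fixed equivalence of the $H$- and $H_{g}$-norms) produces the required form $c_{1}\norm{w}^{2}_{H_{g}} - c_{2}\norm{w}^{2}_{V} + f(t)$ with $c_{2}>0$, $\alpha=2$, and $f$ a constant, hence in $L^{1}([0,T]\times\Omega)$. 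For H2 (weak monotonicity), since $B$ does not depend on $w$ the $B$-difference term vanishes, and $2\inner{A(t,u)-A(t,v)}{u-v}$ is the same expression applied to $u-v$, which by the coercivity estimate above is bounded by $c\norm{u-v}^{2}_{H_{g}}$ with $c = 2k_{1}b_{2}$ (the gradient term has a favourable sign); so H2 holds.

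The main obstacle, to my mind, is not any single hypothesis but the care needed around the \emph{time-dependence of the inner product on $H_{g}$}: the abstract theorem of Section~\ref{Chap:SPDETheGeneralSetting} is stated for a \emph{fixed} Gelfand triple, so one must argue that replacing the genuinely $t$-varying norm $\norm{\cdot}_{H_{g_{t}}}$ by the fixed norm $\norm{\cdot}_{H} = \norm{\cdot}_{L^{2}(\M(0),\rd\nu(g_{0}))}$ (to which it is uniformly equivalent by \eqref{eqn:EquivalenceOfHandH_gNorms}) does not disturb the structure --- in particular that the stochastic integral $\int_{0}^{t} i\,\rd W(s)$, a priori $H$-valued, is still a legitimate $H_{g}$-valued object (this is exactly the content of the footnote attaching $\vphi_{t}$), and that the duality pairing used in defining $A$ is consistent. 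Once this identification is made explicit --- effectively phrasing the whole problem on the fixed space $H = L^{2}(\M(0),\rd\nu(g_{0}))$ with the metric weights absorbed into $A$ and into the equivalence constants --- H1--H4 follow by the routine estimates above, and Theorem~\ref{thm:ExistenceUniquenessOfSolnToCentralSPDE} yields both existence-uniqueness in the sense of Definition~\ref{defn:SolnToSPDEOnM} and the moment bound $\E[\sup_{t}\norm{w}^{2}_{H_{g}}]<\infty$ (again using the uniform norm equivalence to pass between $\norm{\cdot}_{H}$ and $\norm{\cdot}_{H_{g}}$).
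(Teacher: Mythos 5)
Your proposal is correct and follows essentially the same route as the paper's proof: both verify H1--H4 for the weak form of the operator with respect to the triple $H^{1}(\M(0)) \subset H_{g} \subset (H^{1}(\M(0)))^{*}$, using the uniform bounds $a_{2} \le \sqrt{\abs{g}} \le b_{2}$, $a_{3}\abs{\tilde\nabla v}^{2} \le g^{ij}\pd_{x_i}v\,\pd_{x_j}v \le b_{3}\abs{\tilde\nabla v}^{2}$ and $\abs{VH}\le k_{1}$ from Remark~\ref{rem:PropertiesOfMetricTensor}, and then invoke the abstract existence theorem. Your extra attention to the time-dependence of the $H_{g}$-inner product is welcome (the paper relegates this to a footnote), and the only blemishes are cosmetic constant-tracking: in H2 the bound should carry a factor $1/a_{2}$ from passing $\norm{\cdot}_{H}$ to $\norm{\cdot}_{H_{g}}$, and the identity $\norm{\tilde\nabla w}^{2}_{L^{2}} = \norm{w}^{2}_{V}-\norm{w}^{2}_{H}$ holds only up to the $a_{3},b_{3}$ comparison between $\abs{\tilde\nabla w}$ and $\abs{\nabla_{\M(0)}w}$, which you do acknowledge.
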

\begin{proof}
 By theorem~\ref{thm:ExistenceUniquenessToSPDEonM} it suffices to show that
\begin{align*}
 A &:= \frac{1}{\sqrt{\abs{g(x,t)}}} \frac{\pd}{\pd x_{i}} \left( g^{ij}(x,t) \, \sqrt{\abs{g(x,t)}} \, \frac{\pd}{\pd x_{j}} \right) - VH \\
 B &:= i 
\end{align*}
satisfy H1 to H4 of assumption~\ref{Ass:HypothesesOnA&B}. To this end
\begin{enumerate}
 \item Clearly as $A$ is linear, H1 is satisfied.
 \item For H2, we use the pairing $\inner{\cdot}{\cdot}^{g}$ defined by $\inner{z}{v}^{g} = \inner{z}{v}_{H_{g}}$ for every $z \in H_{g}, \, v \in V$ defined in the obvious way.
Let $u,v \in V$ and since $B$ is independent of the solution we have that $\norm{B(\cdot, u) - B(\cdot, v)}_{L_{2}(U,H_{g})} = 0$. By the arguments of the proof of theorem~\ref{thm:ExistenceUniquenessOfSolnToStochasticHeatEquation}
we see that integration by parts is valid for elements of $V$ and we see that we can identify the pairing of $V$ and $V^{*}$ with the inner product on $H_{g}$. Hence
\begin{align*}
 \inner{A(u-v)}{u-v}^{g} &= \int_{\M(0)} \pd_{x_{i}} ( g^{ij}(x,t) \sqrt{\abs{g(x,t)}} \pd_{x_{j}}(u-v)) (u-v) \, \rd \nu(g_{0})\\
                         &- \int_{\M(0)} \sqrt{\abs{g(x,t)}} VH (u-v)^{2} \, \rd \nu(g_{0}) \\
                         &\leq \frac{b_{2}k_{1}}{a_{2}} \norm{u-v}^{2}_{H_{g}},
\end{align*}
where the inequality follows from the positive definiteness of $(g^{ij})$ and the equivalence of the $H$ and $H_{g}$ norms. Thus H2 is satisfied with $c = \frac{2b_{2}k_{1}}{a_{2}} > 0$.
\item For H3, let $v \in V$ and fix $t \in [0,T]$. Then using remark~\ref{rem:PropertiesOfMetricTensor} and noting that $v$ is time-independent one has that
\begin{align*}
 \inner{Av}{v}^{g} &= -\int_{\M(0)} g^{ij}(x,t) \sqrt{\abs{g(x,t)}} \pd_{x_{j}}v \, \pd_{x_{i}}v \, \rd \nu(g_{0}) \\
                   &-\int_{\M(0)} \sqrt{\abs{g(x,t)}} VH v^{2} \, \rd \nu(g_{0}) \\
                   &\leq -a_{2}a_{3} \int_{\M(0)} \vert \tilde \nabla v \vert^{2} \, \rd \nu(g_{0}) + b_{2}k_{1} \norm{v}^{2}_{H} \\
                   &\leq -\frac{a_{2}a_{3}}{b_{3}} \int_{\M(0)} g^{ij}(x,0) \pd_{x_{i}}v \pd_{x_{j}}v \, \rd \nu(g_{0}) + b_{2}k_{1} \norm{v}^{2}_{H}.
\end{align*}

Now identifying that $g^{ij}(x,0) \pd_{x_{i}}v \pd_{x_{j}}v = \abs{\nabla_{\M(0)}v}^{2}$ we have
\begin{align*}
  \inner{Av}{v}^{g} &\leq -\frac{a_{2}a_{3}}{b_{3}} \norm{\nabla_{\M(0)}v}^{2}_{H} + b_{2}k_{1} \norm{v}^{2}_{H}\\
                    &\leq \left( b_{2}k_{1} + \frac{a_{2}a_{3}}{b_{3}}\right)\norm{v}^{2}_{H} -\frac{a_{2}a_{3}}{b_{3}} \norm{v}^{2}_{V},
\end{align*}
where the last inequality follows by the definition of the $V$-norm. Now using the equivalence of the $H$ and $H_{g}$ we have
\[
 2\inner{Av}{v}^{g} + \norm{i}^{2}_{L_{2}(U,H_{g})} \leq c_{1} \norm{v}^{2}_{H_{g}} - c_{2} \norm{v}^{\alpha}_{V} + c^{2}_{4}
\]
where $\alpha = 2$, $c_{1} = 2 (\frac{b_{2}k_{1}}{a_{2}} + \frac{a_{3}}{b_{3}}) >0$, $c_{2} = \frac{2a_{2}a_{3}}{b_{3}} > 0$ and $c_{4} \geq \norm{i}_{L_{2}(U,H_{g})}$ with $c_{4}$ existing
and finite as $i$ is Hilbert-Schmidt\footnote{When $i$ is considered as in the footnote 2.}, which shows H3.
\item Finally for H4, let $u, v \in C^{\infty}(\M(0))$ and again using remark~\ref{rem:PropertiesOfMetricTensor} and noting that $u$ and $v$ are time independent one has that
\begin{align*}
 \abs{\inner{Au}{v}^{g}} &\leq \abs{\int_{\M(0)}g^{ij} \pd_{x_{j}}u \, \pd_{x_{i}}v \, \rd \nu(g_{0})} + b_{2}k_{1} \norm{u}_{H} \norm{v}_{H}\\
                         &\leq b_{2}b \left( \int_{\M(0)} \vert \tilde \nabla u \vert^{2} \, \rd \nu(g_{0}) \right)^{1/2} \left( \int_{\M(0)} \vert \tilde \nabla v \vert^{2} \, \rd \nu(g_{0}) \right)^{1/2} + b_{2}k_{1} \norm{u}_{H} \norm{v}_{H} \\
                         &\leq \frac{b_{2}b}{a_{2}} \norm{\nabla_{\M(0)}u}_{H} \norm{\nabla_{\M(0)}v}_{H} + b_{2}k_{1}\norm{u}_{H}\norm{v}_{H}\\
                         &\leq 2 \max \left(\frac{b_{2}b}{a_{2}} , b_{2}k_{1}\right)\norm{u}_{V} \norm{v}_{V} 
\end{align*}
which implies that $\norm{Au}_{V^{*}} \leq c_{3} \norm{u}_{V}$ which, by a density argument, gives H4 with $c_{3} =  2\max \left(\frac{b_{2}b}{a_{2}} , b_{2}k_{1}\right) > 0$ and $g(t) \equiv 0$.
\end{enumerate}
\end{proof}

The following gives a regularity estimate for the solution $u$ of \eqref{eqn:SPDEonMovingSurfaceGeneral}.
\begin{prop}
 \label{prop:RegEstimateForSolnToSPDEOnMovingSurfaceGeneral}
Suppose $u_{0} \in L^{2}(\Omega, \F_{0}, \P ; H_{g})$. Then the solution $u$ of \eqref{eqn:SPDEonMovingSurfaceGeneral} satisfies
\[
 \E \big[\sup_{t \in [0,T]} \norm{u(t)}_{L^{2}(\M(t))} \big] < \infty.
\]
\end{prop}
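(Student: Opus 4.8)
The plan is to transfer the estimate already established for the reference-surface solution $w$ back to $u$ via the change of variables $y = X(x,t)$, exploiting the uniform-in-$t$ bounds on Jacobians and metric determinants collected in Remark~\ref{rem:PropertiesOfMetricTensor}. First I would recall that by Definition~\ref{defn:SolnToSPDEOnMovingSurface} the solution is $u(t,\omega)(y) = w(t,\omega)(X^{-1}(y,t))$, where $w$ is the unique solution of \eqref{eqn:SPDEOnReferenceSurfaceGeneral} provided by Proposition~\ref{prop:ExistenceAndUniquenessToSolnToSPDEOnReferenceSurfaceGeneral}; note that the hypothesis $u_{0}\in L^{2}(\Omega,\F_{0},\P;H_{g})$ coincides with $u_{0}\in L^{2}(\Omega,\F_{0},\P;H)$ thanks to the norm equivalence \eqref{eqn:EquivalenceOfHandH_gNorms}, so that proposition indeed applies and yields $\E\big[\sup_{t\in[0,T]}\norm{w(t)}^{2}_{H_{g}}\big]<\infty$.

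The core step is a deterministic, pointwise-in-$(t,\omega)$ comparison of norms. Changing variables $y=X(x,t)$ in the surface integral over $\M(t)$ and writing the Riemannian volume elements of $\M(0)$ and $\M(t)$, in coordinates parameterising $\M(0)$ (resp.\ $\M(t)$ through $X(\cdot,t)$), as $\sqrt{\abs{g(x,0)}}\,\rd x$ and $\sqrt{\abs{g(x,t)}}\,\rd x$ — exactly the computation used in the proof of Lemma~\ref{lemma:L^2OfReferenceSurfaceImpliesL^2OfMovingSurface} — gives
\[
\norm{u(t)}^{2}_{L^{2}(\M(t))} = \int_{\M(0)} \abs{w(t)}^{2}\, \frac{\sqrt{\abs{g(\cdot,t)}}}{\sqrt{\abs{g(\cdot,0)}}}\, \rd\nu(g_{0}).
\]
Since $\sqrt{\abs{g(\cdot,0)}}\geq a_{2}$ by Remark~\ref{rem:PropertiesOfMetricTensor}, and since the weight $\sqrt{\abs{g(\cdot,t)}}$ is precisely the one defining $H_{g}=H_{g_{t}}$, this yields $\norm{u(t)}^{2}_{L^{2}(\M(t))}\leq a_{2}^{-1}\norm{w(t)}^{2}_{H_{g}}$ for every $t\in[0,T]$, with a constant independent of $t$ and of $\omega$; in particular $t\mapsto\norm{u(t)}_{L^{2}(\M(t))}$ is jointly measurable in $(t,\omega)$, being comparable to the corresponding quantity for $w$.

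Finally I would take the supremum over $t$ and then the expectation: from the last bound, $\sup_{t}\norm{u(t)}_{L^{2}(\M(t))}\leq a_{2}^{-1/2}\sup_{t}\norm{w(t)}_{H_{g}}$ pointwise in $\omega$, whence by the Cauchy--Schwarz inequality
\[
\E\Big[\sup_{t\in[0,T]}\norm{u(t)}_{L^{2}(\M(t))}\Big] \leq \frac{1}{\sqrt{a_{2}}}\Big(\E\Big[\sup_{t\in[0,T]}\norm{w(t)}^{2}_{H_{g}}\Big]\Big)^{1/2} < \infty
\]
by Proposition~\ref{prop:ExistenceAndUniquenessToSolnToSPDEOnReferenceSurfaceGeneral}; the same argument in fact delivers the stronger $\E[\sup_{t}\norm{u(t)}^{2}_{L^{2}(\M(t))}]<\infty$. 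The only genuinely delicate point is the change of variables: one must keep careful track of the three measures in play — $\rd\nu(g_{0})$ on $\M(0)$, $\rd\nu(g_{t})$ on $\M(t)$, and the weight $\sqrt{\abs{g(\cdot,t)}}$ defining $H_{g}$ — and verify that every Jacobian and determinant factor that appears is bounded above and below uniformly in $t$, which is exactly the content of Remark~\ref{rem:PropertiesOfMetricTensor}. Once the uniform two-sided comparison $\norm{u(t)}_{L^{2}(\M(t))}\asymp\norm{w(t)}_{H_{g}}$ is in hand, the conclusion is immediate.
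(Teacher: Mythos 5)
Your proof is correct and follows essentially the same route as the paper: it transfers the bound $\E\big[\sup_{t}\norm{w(t)}^{2}_{H_{g}}\big]<\infty$ from Proposition~\ref{prop:ExistenceAndUniquenessToSolnToSPDEOnReferenceSurfaceGeneral} to $u$ via a uniform-in-$(t,\omega)$ comparison of $\norm{u(t)}_{L^{2}(\M(t))}$ with the reference-surface norm of $w(t)$, using the two-sided bounds of Remark~\ref{rem:PropertiesOfMetricTensor} (the paper routes this through Lemma~\ref{lemma:L^2OfReferenceSurfaceImpliesL^2OfMovingSurface} and the $H$-norm rather than the $H_{g}$-norm, which is only a cosmetic difference). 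Your explicit Cauchy--Schwarz step to pass from the squared to the unsquared expectation is a welcome extra precision that the paper leaves implicit.
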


\begin{proof}
 From proposition~\ref{prop:ExistenceAndUniquenessToSolnToSPDEOnReferenceSurfaceGeneral} and the equivalence of the $H$ and $H_{g}$ norms one has that
\[
 a_{2} \E \big[ \sup_{t \in [0,T]}\norm{w(t)}^{2}_{H} \big] \leq \E \big[ \sup_{t \in [0,T]}\norm{w(t)}^{2}_{H_{g}} \big] < \infty,
\]
where $w$ is the solution to \eqref{eqn:SPDEOnReferenceSurfaceGeneral}. However, by lemma~\ref{lemma:L^2OfReferenceSurfaceImpliesL^2OfMovingSurface} we have that
\[
 \norm{u(t)}^{2}_{L^{2}(\M(t))} = \norm{w(X^{-1}(\cdot, t))}^{2}_{L^{2}(\M(t))} \leq \frac{b_{1}b_{2}}{a_{2}} \norm{w(t)}^{2}_{H}
\]
and so taking the supremum over all $t \in [0,T]$ and then taking expectations yields the result.
\end{proof}
\begin{rem}
\label{rem:UniquenessOfSolnToSPDEOnMovingSurfaceGeneral}
There still remains the question of uniqueness of the solution $u$ to \eqref{eqn:SPDEonMovingSurfaceGeneral}. There was a choice of diffeomorphism to take and we always ensured 
that the parameterisation and the diffeomorphism were compatible, in the sense that \eqref{eqn:MaterialDerivative=UsualDerivative} holds. Thus, we only speak about uniqueness up to parameterisation.
\end{rem}

\subsection{The stochastic heat equation on a sphere evolving under mean curvature flow}
\label{Section:SPDEOnMovingSurfaceGeneral:StochasticHeatEquationOnAEvolvingSphere}

We now give a specific choice of $\M(t)$, namely the $S^{n-1}$ sphere evolving under so called `mean curvature flow'.

\begin{defn}
  \label{defn:MeanCurvature}
  Let $\Gamma$ be a $C^{1}$ hypersurface with normal vector $\nu$. We define the mean curvature at $x \in \Gamma$ as
\[
 H(x):= \nabla_{\Gamma} \cdot \nu.
\]
\end{defn}
\noindent This naturally leads us onto the following

\begin{defn}
 \label{defn:MeanCurvatureFlow}
Let $(\Gamma(t))_{t \in [0, T]}$ be a family of hypersurfaces. We say that $\Gamma(t)$ evolves according to mean curvature flow (mcf) if the normal velocity component $V$ satisfies 
\[
 V = -H.
\]
\end{defn}
For our case, as given in \cite{deckelnick2005}, one defines the level set function $\phi$ by $\phi(x,t) = \norm{x} - R(t)$, which describes a sphere of radius $R(t)$. Indeed, by \cite{deckelnick2005}, one has
\[
 H = \nabla \cdot \nabla \phi = \frac{n}{R},
\]
where $\nabla$ is the gradient in the ambient space. Further, $V = \phi_{t} = - \dot{R}$. Hence solving $V = -H$ yields $R(t) = \sqrt{1 - 2nt}$ for $t \in [0, \frac{1}{2n})$, noting that the initial radius is 1. We observe that at
$t=1/2n$ the sphere shrinks to a point and so for the remainder for this section we will fix $T < 1/2n$.

From this, we see that we will consider 
\[
 S(t):= \{ x \in \R^{n} \, \colon \, \norm{x} = \sqrt{1-2nt} \} \quad t \in [0, T].
\]
\noindent Observe that $S(0) = S^{n-1}$. Indeed with this representation of $S(t)$ we have the following natural parameterisation and diffeomorphism
\[
 X(\cdot, t) : S(0) \longrightarrow S(t) \quad x \mapsto X(x,t) := x \sqrt{1-2nt}.
\]
We then see that
\[
 g_{ij}(x,t) = (1-2nt) g_{ij}(x,0)
\]
and so as $g_{ij}(x,0)$ is diagonal
\[
 g^{ij}(x,t) = \frac{1}{1-2nt} g^{ij}(x,0).
\]
We now use the PDE \eqref{eqn:ModelPDEMovingSurfaceGeneral}, which yields
\begin{equation}
\begin{aligned}
 \label{eqn:ModelPDEOnMovingSphere}
\pd^{\bullet} u - \frac{n^{2}}{1-2nt} u - \Delta_{S(t)} u &= 0 \\
u(x,0) &= u_{0} \quad x \in S(0).
\end{aligned}
\end{equation}
Letting $w(x,t) = u(X(x,t), t)$ as done in section~\ref{Section:SPDEOnMovingSurfaceGeneral:StochasticHeatEquationOnAGeneralMovingSurface}, yields the PDE on $S(0)$ as
\begin{equation}
\begin{aligned}
 \label{eqn:PDEOnReferenceSphere}
\frac{\pd w}{\pd t}(x,t) - w(x,t) \frac{n^{2}}{1-2nt} &- \frac{1}{\sqrt{\abs{g(x,t)}}} \frac{\pd}{\pd x_{i}} \left( g^{ij}(x,t) \sqrt{\abs{g(x,t)}} \frac{\pd w}{\pd x_{j}} \right)(x,t) = 0 \\
w(x,0) &= u_{0}
\end{aligned}
\end{equation}
which we solve on $S(0)$. On solving, we set $u(y,t) := w(X^{-1}(y,t),t)$.

By the isotropic nature of the evolution of $S(t)$, we can work \emph{without} the weighted $L^{2}(S(0))$ space.

For the noise, analogous to section~\ref{Section:SPDEOnMovingSurfaceGeneral:StochasticHeatEquationOnAGeneralMovingSurface}, we define $H=L^{2}(S(0))$ and let $U$ be a fixed separable Hilbert space.
Let $i : U \to H$ be Hilbert-Schmidt, which by proposition~\ref{prop:HilbertSchmidtMapsAlwaysExist} always exists. Let $W$ be a $U$-valued cylindrical $Q$-Wiener process with $Q=I$. We define the
noise on $S(t)$ by 
\[
 i \, \rd W^{X^{-1}(\cdot,t)}
\]
which is given by \eqref{eqn:NoiseOnMovingSurfaceGeneral}. We define the stochastic analogue of 
\eqref{eqn:ModelPDEOnMovingSphere} as
\begin{equation}
\begin{aligned}
\label{eqn:SPDEOnMovingSphere}
\rd^{\bullet} u &= \left( \Delta_{S(t)} + \frac{n^{2}}{1-2nt} \right) u \, \rd t +  i \, \rd W^{X^{-1}(\cdot,t)}\\
            u(0)&= u_{0}
\end{aligned}
\end{equation}
which we interpret as the following SPDE on $S(0)$ (in the sense of definition~\ref{defn:SolnToSPDEOnM}) with Gelfand triple $H^{1}(S(0)) \subset L^{2}(S(0)) \subset (H^{1}(S(0)))^{*}$
\begin{equation} 
\begin{aligned}
\label{eqn:SPDEOnReferenceSphere}
\rd w &= \left( \frac{1}{\sqrt{\abs{g(x,t)}}} \frac{\pd}{\pd x_{j}} \left( g^{ij}(x,t) \sqrt{\abs{g(x,t)}} \frac{\pd w}{\pd x_{i}}\right) + \frac{n^{2}w}{1-2nt}\right) \, \rd t + i \, \rd W(t)\\ 
w(0) &= u_{0}
\end{aligned}
\end{equation}
We define the solution to \eqref{eqn:SPDEOnMovingSphere} analogously as in definition~\ref{defn:SolnToSPDEOnMovingSurface}, namely
\[u(t, \omega)(y) := w(t, \omega) (X^{-1}(y,t)).\]

\noindent The following shows that we can solve \eqref{eqn:SPDEOnReferenceSphere}.
\begin{prop}
 \label{prop:ExistenceAndUniquenessToSolnToSPDEOnReferenceSphere}
There exists a unique solution to \eqref{eqn:SPDEOnReferenceSphere} in the sense of definition~\ref{defn:SolnToSPDEOnM}. Moreover,
\[
 \E \big[ \sup_{t \in [0,T]} \norm{w(t)}^{2}_{H} \big] < \infty
\]
\end{prop}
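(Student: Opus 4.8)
The plan is to deduce the result from Theorem~\ref{thm:ExistenceUniquenessToSPDEonM}: with the Gelfand triple $H^{1}(S(0)) \subset L^{2}(S(0)) \subset (H^{1}(S(0)))^{*}$, $H := L^{2}(S(0))$ and $B := i$, it suffices to verify that
\[
Aw := \frac{1}{\sqrt{\abs{g(x,t)}}}\,\frac{\pd}{\pd x_{j}}\!\left(g^{ij}(x,t)\sqrt{\abs{g(x,t)}}\,\frac{\pd w}{\pd x_{i}}\right) + \frac{n^{2}w}{1-2nt}
\]
satisfies (H1)--(H4) of Assumption~\ref{Ass:HypothesesOnA&B}, and then apply that theorem with $u_{0} \in L^{2}(\Omega,\F_{0},\P;H)$. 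The observation that makes this short is that the evolution is isotropic: since $g_{ij}(x,t)=(1-2nt)g_{ij}(x,0)$ we have $\sqrt{\abs{g(x,t)}}=(1-2nt)^{n/2}\sqrt{\abs{g(x,0)}}$ and $g^{ij}(x,t)=(1-2nt)^{-1}g^{ij}(x,0)$, so the divergence-form part of $A$ collapses and
\[
Aw = \frac{1}{1-2nt}\,\Delta_{S(0)}w + \frac{n^{2}}{1-2nt}\,w .
\]
Because $T<1/2n$, the scalar weight obeys $1 \le \frac{1}{1-2nt} \le \frac{1}{1-2nT} =: \kappa < \infty$ uniformly on $[0,T]$. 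Thus $A$ is just a bounded, time-dependent perturbation of the Laplace--Beltrami operator on the fixed compact boundaryless manifold $S(0)=S^{n-1}$ plus a bounded zeroth-order term, and $B=i$ is the same deterministic Hilbert--Schmidt noise as in Theorem~\ref{thm:ExistenceUniquenessOfSolnToStochasticHeatEquation}; the argument below is a transcription of that proof with these harmless modifications.

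Since $A$ is linear, (H1) is immediate from bilinearity of the $V^{*}$--$V$ pairing. For (H2), $B$ is independent of the solution so $\norm{B(\cdot,u)-B(\cdot,v)}_{L_{2}(U,H)}=0$; using density of $C^{\infty}(S(0))$ in $H^{1}(S(0))$ and integration by parts on the boundaryless $S(0)$ (as in Theorem~\ref{thm:ExistenceUniquenessOfSolnToStochasticHeatEquation}) one gets $\inner{A(u-v)}{u-v} = -\frac{1}{1-2nt}\norm{\nabla_{S(0)}(u-v)}_{H}^{2} + \frac{n^{2}}{1-2nt}\norm{u-v}_{H}^{2} \le n^{2}\kappa\,\norm{u-v}_{H}^{2}$, giving (H2) with $c = 2n^{2}\kappa$. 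For (H3) the same identity, now using $\frac{1}{1-2nt}\ge 1$ together with $\norm{v}_{V}^{2}=\norm{v}_{H}^{2}+\norm{\nabla_{S(0)}v}_{H}^{2}$, yields $2\inner{Av}{v} \le -2\norm{v}_{V}^{2} + (2+2n^{2}\kappa)\norm{v}_{H}^{2}$; adding $\norm{i}_{L_{2}(U,H)}^{2}\le c_{4}^{2}$ gives (H3) with $\alpha = 2$, $c_{1}=2+2n^{2}\kappa$, $c_{2}=2$ and $f(t)=c_{4}^{2} \in L^{1}([0,T]\times\Omega,\rd t\otimes\P)$. Finally, for (H4), Cauchy--Schwarz on $C^{\infty}(S(0))$ gives $\abs{\inner{Au}{v}} \le \kappa\norm{\nabla_{S(0)}u}_{H}\norm{\nabla_{S(0)}v}_{H} + n^{2}\kappa\norm{u}_{H}\norm{v}_{H} \le \kappa(1+n^{2})\norm{u}_{V}\norm{v}_{V}$, so by density $\norm{Au}_{V^{*}} \le c_{3}\norm{u}_{V}$ with $c_{3}=\kappa(1+n^{2})$ and $g\equiv 0$.

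With (H1)--(H4) verified, Theorem~\ref{thm:ExistenceUniquenessToSPDEonM} delivers a unique solution of \eqref{eqn:SPDEOnReferenceSphere} in the sense of Definition~\ref{defn:SolnToSPDEOnM} together with the bound $\E[\sup_{t\in[0,T]}\norm{w(t)}_{H}^{2}]<\infty$. There is no genuine obstacle; the one point that must not be overlooked is the uniform upper bound on $\frac{1}{1-2nt}$, which is precisely why the restriction $T<1/2n$ was imposed (the sphere shrinks to a point and this coefficient, as well as $VH=\tfrac{n^{2}}{1-2nt}$, blows up as $t\uparrow 1/2n$). Everything else reproduces the verification carried out for the stochastic heat equation on a Riemannian manifold.
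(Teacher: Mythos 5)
Your proof is correct and follows essentially the same route as the paper: exploit the isotropic evolution to reduce $A$ to $\frac{1}{1-2nt}\Delta_{S(0)} + \frac{n^{2}}{1-2nt}$, use the uniform bound $1 \le \frac{1}{1-2nt} \le \frac{1}{1-2nT}$ afforded by $T < 1/2n$, and verify (H1)--(H4) exactly as in the fixed-manifold stochastic heat equation before invoking Theorem~\ref{thm:ExistenceUniquenessToSPDEonM}. The constants you obtain differ only cosmetically from those in the paper's verification.
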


\begin{proof}
 By theorem~\ref{thm:ExistenceUniquenessToSPDEonM} it suffices to show that
\begin{align*}
 A &:=  \frac{1}{\sqrt{\abs{g(x,t)}}} \frac{\pd}{\pd x_{j}} \left( g^{ij}(x,t) \sqrt{\abs{g(x,t)}} \frac{\pd }{\pd x_{i}} \right) + \frac{n^{2}}{1-2nt} \\
 B &:= i
\end{align*}
satisfy H1 to H4 of assumption~\ref{Ass:HypothesesOnA&B}. In the following, we use the arguments of the proof of theorem~\ref{thm:ExistenceUniquenessOfSolnToStochasticHeatEquation} to justify the
integration by parts and identifying the pairing between $V$ and $V^{*}$ with the inner product on $H$.
\begin{enumerate}
 \item Clearly, as $A$ is linear, H1 is immediately satisfied.
 \item For H2, let $u,v \in V$. Noting that $\norm{B(\cdot, u) - B(\cdot, v)}_{L_{2}(U,H)} = 0$ and using the isotropic evolution of $S(t)$ one has
\begin{align*}
 \inner{A(u-v)}{(u-v)} &= - \frac{1}{1-2nt} \int_{S(0)} g^{ij}(x,0) \frac{\pd (u-v)}{\pd x_{j}} \frac{\pd (u-v)}{\pd x_{i}} \, \rd \nu(g_{0}) \\
                       &+ \frac{n^{2}}{1-2nt} \norm{u-v}^{2}_{H}\\
                       &= -\frac{1}{1-2nt} \norm{\nabla_{S(0)}(u-v)}^{2}_{H} + \frac{n^{2}}{1-2nt} \norm{u-v}^{2}_{H}\\
                       &\leq \frac{n^{2}}{1-2nT} \norm{u-v}^{2}_{H}.
\end{align*}
Hence H2 is satisfied with $c:= \frac{2n^{2}}{1-2nT} > 0$.
\item To see H3, let $v \in V$ and then by item 2 above,
\begin{align*}
 \inner{Av}{v} = -\frac{1}{1-2nt}\norm{\nabla_{S(0)}v}^{2}_{H} + \frac{n^{2}}{1-2nt}\norm{v}^{2}_{H}.
\end{align*}
However, since $T < 1/2n$, we have that for every $0 \leq t \leq T$
\[
 1 \leq \frac{1}{1-2nt} \leq \frac{1}{1-2nT}
\]
so
\begin{align*}
 \inner{Av}{v} &\leq - \norm{\nabla_{S(0)}v}^{2}_{H} + \frac{n^{2}}{1-2nT} \norm{v}^{2}_{H} \\
               & = \left( 1 + \frac{n^{2}}{1-2nT}\right)\norm{v}^{2}_{H} - \norm{v}^{2}_{V},
\end{align*}
by definition of the norm on $V$. Hence
\[
 2 \inner{Av}{v} + \norm{i}^{2}_{L_{2}(U,H)} \leq c_{1} \norm{v}^{2}_{H} - c_{2} \norm{v}^{\alpha}_{V} + c^{2}_{4}
\]
where $\alpha = 2$, $c_{1} = 2\left( 1 + \frac{n^{2}}{1-2nT}\right) > 0$, $c_{2} = 2$ and $c_{4} \geq \norm{i}_{L_{2}(U,H)}$ which exists as $i$ is Hilbert-Schmidt. This shows that H3 is satisfied.
\item Finally, for H4 let $u, v \in C^{\infty}(S(0))$. Then by the isotropic evolution of $S(t)$ we have
\begin{align*}
 \abs{\inner{Au}{v}} &\leq \abs{\frac{1}{1-2nt} \int_{S(0)} g^{ij}(x,0) \frac{\pd u}{\pd x_{j}} \frac{\pd v}{\pd x_{i}} \, \rd \nu(g_{0})} + \frac{n^{2}}{1-2nT}\norm{u}_{H}\norm{v}_{H}\\
                     &= \abs{\frac{1}{1-2nt} \int_{\M(0)}\inner{\nabla_{S(0)}u}{\nabla_{S(0)}v}_{g} \, \rd \nu(g_{0})} + \frac{n^{2}}{1-2nT}\norm{u}_{H}\norm{v}_{H}\\
                     &\leq \frac{1}{1-2nT}\norm{\nabla_{S(0)}u}_{H} \norm{\nabla_{S(0)}v}_{H} + \frac{n^{2}}{1-2nT} \norm{u}_{H} \norm{v}_{H} \\
                     &\leq  \frac{2n^{2}}{1-2nT}\norm{u}_{V}\norm{v}_{V}
\end{align*}
which shows that $\norm{Au}_{V^{*}} \leq \frac{2n^{2}}{1-2nT} \norm{u}_{V}$ which, by a density argument, shows that H4 holds with $c_{3}= \frac{2n^{2}}{1-2nT} > 0$ and $g(t) \equiv 0$.
\end{enumerate}
\end{proof}
\noindent Analogous to proposition~\ref{prop:RegEstimateForSolnToSPDEOnMovingSurfaceGeneral} we immediately see that
\[
 \E \big[ \sup_{t \in [0,T]}\norm{u(t)}_{L^{2}(S(t))} \big] < \infty
\]
where $u$ is the solution to \eqref{eqn:SPDEOnMovingSphere}.
\subsection{A nonlinear stochastic heat equation on a general moving surface}
\label{Section:SPDEOnMovingSurfaceGeneral:NonLinearStochasticHeatEquationOnAEvolvingSurface}

So far in this chapter, we have only considered linear SPDE. Since some mathematical models need nonlinear terms to be more realistic, we present an example of a nonlinear SPDE.

Recall section~\ref{Section:SPDEOnMovingSurfaceGeneral:StochasticHeatEquationOnAGeneralMovingSurface}, but instead of \eqref{eqn:ModelPDEMovingSurfaceGeneral} we consider 
\begin{equation}
 \label{eqn:ModelPDEMovingSurfaceGeneralNonLinear}
\begin{aligned}
 \pd^{\bullet} u + uVH + f(u) -\Delta_{\M(t)} u &= 0 \\
                                            u(x,0)&= u_{0} (x) \quad x \in \M(0).
\end{aligned}
\end{equation}
We will specify how $f$ should behave shortly. As before, we assume that there exists $k_{1} > 0$ such that $\abs{VH} \leq k_{1}$ for every $(y,t) \in \M(t) \times [0,T]$.

Using the method of section~\ref{Section:SPDEOnMovingSurfaceGeneral:StochasticHeatEquationOnAGeneralMovingSurface} by defining $w(x,t) := u(X(x,t), t)$ where $x \in \M(0)$, one immediately
arrives at the following PDE on $\M(0)$
\begin{equation}
 \label{eqn:ModelPDEMovingReferenceSurfaceGeneralNonLinear}
\begin{aligned}
\frac{\pd w}{\pd t}(x,t) + w(x,t)VH + f(w)(x,t) &- \frac{1}{\sqrt{\abs{g(x,t)}}}\frac{\pd}{\pd x_{i}} \left(g^{ij}(x,t) \sqrt{\abs{g(x,t)}} \frac{\pd w}{\pd x_{j}} \right) = 0 \\
                                            w(x,0)&= u_{0} (x) \quad x \in \M(0).
\end{aligned}
\end{equation}
We define the stochastic analogue of \eqref{eqn:ModelPDEMovingSurfaceGeneralNonLinear} as
\begin{equation}
 \label{eqn:SPDENonLinearOnMovingSurfaceGeneral}
\begin{aligned}
\rd^{\bullet} u &= \left( \Delta_{\M(t)}u - f(u) - uVH\right)  \, \rd t + i \, \rd W^{X^{-1}(\cdot,t)}(t) \\
              u(0) &= u_{0}
\end{aligned}
\end{equation}
(where $i \, \rd W^{X^{-1}(\cdot,t)}(t)$ is given by \eqref{eqn:NoiseOnMovingSurfaceGeneral}) which we interpret as solving the following SPDE on $\M(0)$
\begin{equation}
 \label{eqn:SPDENonLinearOnReferenceSurfaceGeneral}
\begin{aligned}
 \rd w(t) &= \left( \frac{1}{\sqrt{\abs{g(x,t)}}} \frac{\pd}{\pd x_{i}} \left(g^{ij}(x,t) \sqrt{\abs{g(x,t)}} \frac{\pd w}{\pd x_{j}} \right) - f(w) - wVH\right) \, \rd t + i \, \rd W(t) \\ 
     w(0) &= u_{0}  
\end{aligned}
\end{equation} with Gelfand triple $V \subset H_{g} \subset V^{*}$ where 
\begin{align*}
 V&:= H^{1}(\M(0)),  \\
H_{g}&:= L^{2}(\M(0), \sqrt{\abs{g(\cdot, t)}} \, \rd \nu(g_{0}); \R)\\
\text{and we define} \qquad H &:= L^{2}(\M(0), \rd \nu(g_{0}) ; \R)
\end{align*}
as in section~\ref{Section:SPDEOnMovingSurfaceGeneral:StochasticHeatEquationOnAGeneralMovingSurface}. The definition of the solution to \eqref{eqn:SPDENonLinearOnMovingSurfaceGeneral} is as given in definition~\ref{defn:SolnToSPDEOnMovingSurface}.
Note here that $i \, \rd W(t)$ is $L^{2}(\M(0), \rd \nu(g_{0}); \R)$ valued but by \eqref{eqn:EquivalenceOfHandH_gNorms} we see that it is $H_{g}$ valued.

We now employ the following assumptions on $f$.
\begin{ass}
 \label{Ass:AssumptionOnNonLinearity}
Consider \eqref{eqn:SPDENonLinearOnReferenceSurfaceGeneral}. We assume that $f : V \to V$ satisfies
\begin{enumerate}[(i)]
 \item $f$ is monotone increasing on $H_{g}$. That is; for any $u, v \in V$
\[
 \inner{f(u) - f(v)}{u-v}_{H_{g}} \geq 0
\]
\item $f$ is Lipschitz on $H_{g}$ and $f(0) = 0$. That is; there exists $c_{\mathrm{lip}} > 0$ such that
\[
 \norm{f(u) - f(v)}_{H_{g}} \leq c_{\mathrm{lip}} \norm{u-v}_{H_{g}} \quad \text{for every}\, \,\, u, v \in V.
\]
\end{enumerate}
\end{ass}

\begin{rem}
 \label{rem:AssumptionOnNonLinearity}
The assumptions are very natural and are the sort of assumptions one finds in deterministic PDE theory. 

\noindent Note that $(ii)$ above implies that $f$ is continuous on $H_{g}$.

\end{rem}

One can see that \eqref{eqn:SPDENonLinearOnReferenceSurfaceGeneral} is simply \eqref{eqn:SPDEOnReferenceSurfaceGeneral} but with an additional nonlinear operator $f : V \to V$. In light of
this observation, the following lemma will save needless repetition.
\begin{lemma}
 \label{lemma:NonlinearOperatorSatisfiesH1ToH4}
Let $V \subset H_{g} \subset V^{*}$ be as above. Suppose
\begin{align*}
 A &:= \left( \frac{1}{\sqrt{\abs{g(x,t)}}} \frac{\pd}{\pd x_{i}} \left(g^{ij}(x,t) \sqrt{\abs{g(x,t)}} \frac{\pd}{\pd x_{j}} \right) - VH\right) \\
B &:= i
\end{align*}
satisfy H1 to H4 of assumption~\ref{Ass:HypothesesOnA&B}, with $\alpha = 2$ in H3 and $\norm{i}_{L_{2}(U,H_{g})} \leq c_{4}$. Then
\begin{align*}
 \tilde{A} &:= A - f \\
         B &= i
\end{align*}
where $f : V \to V$ and satisfies assumption~\ref{Ass:AssumptionOnNonLinearity} also satisfy H1 to H4 of assumption~\ref{Ass:HypothesesOnA&B}.
\end{lemma}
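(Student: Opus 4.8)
The plan is to verify the four hypotheses H1--H4 of Assumption~\ref{Ass:HypothesesOnA&B} for the pair $(\tilde A, B) = (A - f, i)$ by splitting each condition into the contribution of $A$, which holds by hypothesis, and the contribution of $f$, which is controlled by Assumption~\ref{Ass:AssumptionOnNonLinearity}. The single structural observation that makes everything go through is that $f$ maps $V$ into $V$, and $V \subset H_{g} \subset V^{*}$ with all inclusions continuous (the first because $f(V)\subset V$ by hypothesis, the chain because it is a Gelfand triple); hence for $u \in V$ the element $f(u)$ lies in $H_{g}$ and the $V^{*}$--$V$ pairing against any $v \in V$ reduces to $\inner{f(u)}{v} = \inner{f(u)}{v}_{H_{g}}$. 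Since $\tilde A$ is the difference of two maps into $V^{*}$, all four estimates for $\tilde A$ will follow by linearity/triangle-inequality arguments from the corresponding estimates for $A$ and from the monotonicity, Lipschitz and $f(0)=0$ properties of $f$.

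First I would deal with H1 and H2. For H1, $A$ is linear hence hemicontinuous, and for the nonlinear part it suffices to note that $\lambda \mapsto u + \lambda v$ is continuous $\R \to V$, hence continuous $\R \to H_{g}$ by the embedding $V \subset H_{g}$; composing with $f$, which is Lipschitz and therefore continuous on $H_{g}$, shows $\lambda \mapsto f(u+\lambda v)$ is continuous into $H_{g}$, and pairing against a fixed $x \in V$ and invoking Cauchy--Schwarz in $H_{g}$ gives continuity of $\lambda \mapsto \inner{f(u+\lambda v)}{x}$. Thus $\lambda \mapsto \inner{\tilde A(u+\lambda v)}{x}$ is continuous. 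For H2, write
\[
2\inner{\tilde A u - \tilde A v}{u-v} + \norm{B(\cdot,u)-B(\cdot,v)}^{2}_{L_{2}(U,H_{g})}
= 2\inner{Au - Av}{u-v} - 2\inner{f(u)-f(v)}{u-v}_{H_{g}},
\]
since $B=i$ is independent of the solution; the first term is $\leq c\norm{u-v}^{2}_{H_{g}}$ by H2 for $A$, and the second term is $\leq 0$ by monotonicity of $f$ on $H_{g}$. Hence H2 holds with the same constant $c$ as for $A$.

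Next I would treat H3 and H4. For H3, note $\inner{\tilde A v}{v} = \inner{Av}{v} - \inner{f(v)}{v}_{H_{g}}$, and using $f(0)=0$ together with monotonicity, $\inner{f(v)}{v}_{H_{g}} = \inner{f(v)-f(0)}{v-0}_{H_{g}} \geq 0$, so $2\inner{\tilde A v}{v} + \norm{i}^{2}_{L_{2}(U,H_{g})} \leq 2\inner{Av}{v} + \norm{i}^{2}_{L_{2}(U,H_{g})}$, and H3 is inherited verbatim from $A$ (same $\alpha = 2$, $c_{1}$, $c_{2}$, $f(t)=c_{4}^{2}$). For H4, use $\norm{\tilde A v}_{V^{*}} \leq \norm{A v}_{V^{*}} + \norm{f(v)}_{V^{*}}$; the first term obeys the H4 bound for $A$, and for the second, the continuous embedding $H_{g}\subset V^{*}$ gives a constant $C$ with $\norm{f(v)}_{V^{*}} \leq C\norm{f(v)}_{H_{g}}$, while the Lipschitz property and $f(0)=0$ give $\norm{f(v)}_{H_{g}} \leq c_{\mathrm{lip}}\norm{v}_{H_{g}} \leq c_{\mathrm{lip}}C'\norm{v}_{V}$ via the embedding $V\subset H_{g}$. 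Combining yields $\norm{\tilde A v}_{V^{*}} \leq g(t) + c_{3}'\norm{v}_{V}$ for a new constant $c_{3}'$, with $g$ unchanged, so H4 holds with $\alpha = 2$. There is no genuinely hard step here; the only point requiring care is the consistent identification of the $V^{*}$--$V$ pairing with the $H_{g}$ inner product on the range of $f$, and the bookkeeping of constants, all of which is routine given the continuity of the embeddings in the Gelfand triple.
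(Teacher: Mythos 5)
Your proof is correct and follows the same overall strategy as the paper: split each of H1--H4 into the contribution of $A$ (granted by hypothesis) and the contribution of $f$ (controlled via the embedding $V \subset H_{g} \subset V^{*}$ and the identification $\inner{f(u)}{v} = \inner{f(u)}{v}_{H_{g}}$), with H1 from continuity of $f$ on $H_{g}$, H2 from monotonicity, and H4 from the Lipschitz bound $\norm{f(v)}_{H_{g}} \leq c_{\mathrm{lip}}\norm{v}_{H_{g}}$ together with the continuity of the embeddings. The one place you genuinely diverge is H3: you observe that $\inner{f(v)}{v}_{H_{g}} = \inner{f(v)-f(0)}{v-0}_{H_{g}} \geq 0$ by monotonicity and $f(0)=0$, so the term $-\inner{f(v)}{v}_{H_{g}}$ can simply be dropped and H3 is inherited with the \emph{same} constants $c_{1}, c_{2}$ as for $A$. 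The paper instead estimates $\abs{\inner{f(v)}{v}_{H_{g}}}$ by Cauchy--Schwarz and the Lipschitz property, which inflates $c_{1}$ to $c_{1} + c_{\mathrm{lip}}b_{2}/a_{2}^{2}$. Your route is sharper and shows the Lipschitz hypothesis is not needed for coercivity at all (only for H1 and H4); since H3 only requires \emph{some} finite $c_{1}$, both arguments are equally valid for the purposes of the lemma.
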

\begin{proof}
\begin{enumerate}
 \item For H1 noting remark~\ref{rem:AssumptionOnNonLinearity} we have that $f$ is continuous on $H_{g}$. For $u,v, x \in V$ one has
 \[
  \inner{\tilde{A}(u + \lambda v)}{x}^{g} =  \inner{A(u + \lambda v)}{x}^{g} - \inner{f(u + \lambda v)}{x}^{g}
 \]
 and we have that $\lambda \mapsto \inner{A(u + \lambda v)}{x}^{g}$ is continuous by assumption. We now see that $\lambda \mapsto \inner{f(u + \lambda v)}{x}^{g}$ is continuous as $\inner{f(u + \lambda v)}{x}^{g}$
 is a composition of continuous operators and so continuous.
\item For H2, let $u, v \in V$. Then
\begin{align*}
 \inner{\tilde{A}u - \tilde{A}v}{u-v}^{g} &= \inner{Au - Av}{u-v}^{g} - \inner{f(u) - f(v)}{u-v}_{H_{g}} \\
                                          &\leq \inner{Au - Av}{u-v}^{g}
\end{align*}
by monotonicity of $f$ on $H_{g}$ and noting that $\sqrt{\abs{g(\cdot, t)}} > 0$. Hence as $B(\cdot, u) = B(\cdot, v)$ we have
\[
 2 \inner{\tilde{A}u - \tilde{A}v}{u-v}^{g} \leq 2\inner{Au - Av}{u-v}^{g} \leq c \norm{u-v}^{2}_{H_{g}}
\]
by assumption on $A$.
\item For H3, let $v \in V$. Then
\begin{align}
 \inner{\tilde{A}v}{v}^{g} &\leq \inner{Av}{v}^{g} + \vert\inner{f(v)}{v}_{H_{g}} \vert \\
                           &\leq \inner{Av}{v}^{g} + \frac{b_{2}}{a^{2}_{2}} \norm{f(v)}_{H_{g}} \norm{v}_{H_{g}}
\end{align}
where the last inequality follows from the equivalence of the $H$ and $H_{g}$ norms, \eqref{eqn:EquivalenceOfHandH_gNorms}. By the Lipschitz property of $f$ on $H_{g}$ and the assumption that $f(0) = 0$
we have
\[
 \norm{f(u)}_{H_{g}} = \norm{f(u) - f(0) + f(0)}_{H_{g}} \leq c_{\mathrm{lip}}\norm{u}_{H_{g}}
\]
and so
\[
 \inner{\tilde{A}v}{v}^{g} \leq \inner{Av}{v}^{g} + \frac{c_{\mathrm{lip}}b_{2}}{a^{2}_{2}} \norm{v}^{2}_{H_{g}},
\]
thus under the assumption of the lemma
\[
 2 \inner{\tilde{A}v}{v}^{g} + \norm{i}^{2}_{L_{2}(U,H_{g})} \leq \left( c_{1} +\frac{c_{\mathrm{lip}}b_{2}}{a^{2}_{2}}\right)\norm{v}^{2}_{H_{g}} - c_{2}\norm{v}^{2}_{V} + c^{2}_{4}.
\]

\item Finally for H4, note that since there exists $c_{3} > 0$ such that
\[
 \norm{Au}_{V^{*}} \leq c_{3} \norm{u}_{V} \quad \mathrm{for \, \, every \, \,} u \in V
\]
we have that for $u, v \in V$ arbitrary
\[
 \vert \inner{Au}{v}^{g}\vert \leq c_{3} \norm{u}_{V}\norm{v}_{V}.
\]
Bearing this in mind, one computes
\begin{align*}
 \vert \inner{\tilde{A}u}{v}^{g} \vert &\leq \vert\inner{Au}{v}^{g} \vert + \vert \inner{f(u)}{v}_{H_{g}} \vert \\
                                       &\leq c_{3}\norm{u}_{V}\norm{v}_{V} + \frac{c_{\mathrm{lip}}b_{2}}{a^{2}_{2}}\norm{u}_{H_{g}}\norm{v}_{H_{g}}\\
                                       &\leq \left( c_{3} + \frac{c_{\mathrm{lip}}b_{2}}{a^{2}_{2}} \right)\norm{u}_{V}\norm{v}_{V}
\end{align*}
which shows that
\[
 \norm{\tilde{A}u}_{V^{*}} \leq \left( c_{3} + \frac{c_{\mathrm{lip}}b_{2}}{a^{2}_{2}} \right)\norm{u}_{V}
\]
and so H4 is satisfied.
\end{enumerate}
\end{proof}

\noindent Immediately we have the following result.
\begin{thm}
 \label{thm:ExistenceAndUniquenessToSolnToSPDENonLinearOnReferenceSurfaceGeneral}
Let $u_{0} \in L^{2}(\Omega, \F_{0}, \P ; H)$. Then there exists a unique solution (in the sense of definition~\ref{defn:SolnToSPDEOnM}) to \eqref{eqn:SPDENonLinearOnReferenceSurfaceGeneral}.
Further, 
\[
 \E \big[ \sup_{t \in [0,T]}\norm{w(t)}^{2}_{H_{g}} \big] < \infty
\]
\end{thm}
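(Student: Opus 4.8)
The plan is to reduce the statement to the abstract existence and uniqueness theorem (theorem~\ref{thm:ExistenceUniquenessToSPDEonM}) applied to the operators $\tilde{A} := A - f$ and $B := i$ on the Gelfand triple $V \subset H_{g} \subset V^{*}$, exactly as in the proof of proposition~\ref{prop:ExistenceAndUniquenessToSolnToSPDEOnReferenceSurfaceGeneral}, using lemma~\ref{lemma:NonlinearOperatorSatisfiesH1ToH4} to transfer the hypotheses H1--H4 from the linear part to the full nonlinear operator.

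First I would recall that in the proof of proposition~\ref{prop:ExistenceAndUniquenessToSolnToSPDEOnReferenceSurfaceGeneral} it was already established that the linear operator
\[
 A := \frac{1}{\sqrt{\abs{g(x,t)}}} \frac{\pd}{\pd x_{i}} \left( g^{ij}(x,t) \, \sqrt{\abs{g(x,t)}} \, \frac{\pd}{\pd x_{j}} \right) - VH,
\]
together with $B := i$, satisfies H1--H4 of assumption~\ref{Ass:HypothesesOnA&B} with $\alpha = 2$ in H3 and with $\norm{i}_{L_{2}(U,H_{g})} \leq c_{4}$ for some finite $c_{4}$ (here $i$ is understood, as in footnote~2, to be composed with $\vphi_{t}$ so as to be Hilbert--Schmidt into $H_{g}$). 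This is precisely the hypothesis required by lemma~\ref{lemma:NonlinearOperatorSatisfiesH1ToH4}, whose conclusion is that $\tilde{A} = A - f$ and $B = i$ again satisfy H1--H4, provided $f : V \to V$ satisfies assumption~\ref{Ass:AssumptionOnNonLinearity}. Since $f$ is assumed to satisfy assumption~\ref{Ass:AssumptionOnNonLinearity}, the hypotheses of theorem~\ref{thm:ExistenceUniquenessToSPDEonM} are met for the SPDE \eqref{eqn:SPDENonLinearOnReferenceSurfaceGeneral}.

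It then remains only to deal with the initial datum. Theorem~\ref{thm:ExistenceUniquenessToSPDEonM} requires the initial condition to lie in $L^{2}(\Omega, \F_{0}, \P; H_{g})$, whereas the statement assumes $u_{0} \in L^{2}(\Omega, \F_{0}, \P; H)$; but by the norm equivalence \eqref{eqn:EquivalenceOfHandH_gNorms} the spaces $H$ and $H_{g}$ coincide as sets with equivalent norms, so $L^{2}(\Omega, \F_{0}, \P; H) = L^{2}(\Omega, \F_{0}, \P; H_{g})$ and the condition holds. Applying theorem~\ref{thm:ExistenceUniquenessToSPDEonM} yields a unique solution $w$ to \eqref{eqn:SPDENonLinearOnReferenceSurfaceGeneral} in the sense of definition~\ref{defn:SolnToSPDEOnM}, and the ``moreover'' clause of that theorem gives $\E\big[\sup_{t \in [0,T]} \norm{w(t)}^{2}_{H_{g}}\big] < \infty$, which is the asserted estimate.

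The proof carries essentially no obstacle of its own: all the analytic content — that monotonicity and the Lipschitz bound on $f$ preserve weak monotonicity, coercivity and boundedness, and that continuity of $f$ on $H_{g}$ preserves hemicontinuity — has been isolated in lemma~\ref{lemma:NonlinearOperatorSatisfiesH1ToH4}. If any point needs care, it is the bookkeeping around the weighted space $H_{g}$: one must consistently use the pairing $\inner{\cdot}{\cdot}^{g}$ (agreeing with $\inner{\cdot}{\cdot}_{H_{g}}$ on $H_{g} \times V$) and regard $i$ as Hilbert--Schmidt into $H_{g}$ via footnote~2 — but these are exactly the conventions already fixed in the proof of proposition~\ref{prop:ExistenceAndUniquenessToSolnToSPDEOnReferenceSurfaceGeneral} and in lemma~\ref{lemma:NonlinearOperatorSatisfiesH1ToH4}, so no new work is needed.
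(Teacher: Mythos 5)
Your proposal is correct and follows essentially the same route as the paper: recall from proposition~\ref{prop:ExistenceAndUniquenessToSolnToSPDEOnReferenceSurfaceGeneral} that the linear part $A$ together with $B = i$ satisfies H1--H4 with $\alpha = 2$, invoke lemma~\ref{lemma:NonlinearOperatorSatisfiesH1ToH4} to transfer these to $\tilde{A} = A - f$, and conclude via theorem~\ref{thm:ExistenceUniquenessToSPDEonM}. Your extra remark on identifying $L^{2}(\Omega, \F_{0}, \P; H)$ with $L^{2}(\Omega, \F_{0}, \P; H_{g})$ via the norm equivalence \eqref{eqn:EquivalenceOfHandH_gNorms} is a small point the paper leaves implicit, but it changes nothing of substance.
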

\begin{proof}
 Recall proposition~\ref{prop:ExistenceAndUniquenessToSolnToSPDEOnReferenceSurfaceGeneral} where we saw that
\begin{align*}
 A&:= \frac{1}{\sqrt{\abs{g(x,t)}}} \frac{\pd}{\pd x_{i}} \left( g^{ij}(x,t) \sqrt{\abs{g(x,t)}}\frac{\pd}{\pd x_{j}}\right) -VH \\
 B&:= i
\end{align*}
satisfy H1 to H4 of assumption~\ref{Ass:HypothesesOnA&B} with $\alpha = 2$ in H3. By theorem~\ref{thm:ExistenceUniquenessToSPDEonM} we are required to show that
\begin{align*}
 \tilde{A} &:= A - f \\
         B &= i
\end{align*}
satisfy H1 to H4 of assumption~\ref{Ass:HypothesesOnA&B}. However, we now apply lemma~\ref{lemma:NonlinearOperatorSatisfiesH1ToH4} to see this and the proof is complete.
\end{proof}
\begin{rem}
 \begin{enumerate}[(i)]
   \item We see that we've shown there exists a solution to \eqref{eqn:SPDENonLinearOnMovingSurfaceGeneral}, which is unique up to parameterisation and diffeomorphism $X$.
   \item Analogously to proposition~\ref{prop:RegEstimateForSolnToSPDEOnMovingSurfaceGeneral} we immediately see that the solution $u$ to \eqref{eqn:SPDENonLinearOnMovingSurfaceGeneral} satisfies
\[
  \E \big[\sup_{t \in [0,T]} \norm{u(t)}_{L^{2}(\M(t))} \big] < \infty.
\]
 \end{enumerate}
\end{rem}

\section{Stochastic partial differential equations on general evolving manifolds}
\label{chap:SPDEOnGeneralEvolvingManifolds}
\subsection{Discussion}
We proceed to give a different (but under some conditions) equivalent way of thinking of an evolving manifold. The idea is to think of one fixed topological manifold, $\M$, equipped with a one-parameter
family of metrics $(g_{ij}(\cdot, t))_{t \in [0,T]}$ applied to the manifold.

This approach of thinking of evolving manifolds is far from new. It is the standard view when one considers Ricci flow of manifolds, for example (\cite{topping2006}).

The idea is now to put a PDE on $\M$ with the  metric $(g_{ij}(\cdot, t))_{t \in [0,T]}$ and define the stochastic analogue of this.

In the following we discuss how, under certain regularity assumptions on the metric, this is equivalent to the PDE considered in chapter~\ref{Chapter:SPDEonMovingSurfaces}. We then proceed to define
what PDE we will be considering on $\M$ and formulate the stochastic analogue, proving an existence and uniqueness result.

We will always consider $\M$ to be a compact, connected, oriented and closed topological manifold. We also assume that no topology changes occur over $[0,T]$.

For the discussion, suppose we are given the metric
\[
 g_{ij}(x,t) = f(t) g_{ij}(x,0) \quad x \in \M,
\]
where $g_{ij}(x,0)$ is sufficiently nice and $f \in C^{1}([0,T]; (0 , \infty))$. In order to compare equations in this case, we need to find a parameterisation $X$ such that
\begin{equation}
\label{eqn:MetricEquation}
  g_{ij}(x,t) = X_{x_{i}}(x,t) \cdot X_{x_{j}}(x,t),
\end{equation}
subject to $X_{t} \cdot \nu = v_{\nu}$ where $v_{\nu}$ is the velocity in the normal direction.

If the metric is initially diagonal, then it is diagonal for all times and so solving \eqref{eqn:MetricEquation} is equivalent to solving the eikonal equation on $\M$. Further, in a special case
when $g_{ii}(x,0) = 1$, the existence of solutions are discussed in \cite{kupeli1995}.

Supposing that we can solve \eqref{eqn:MetricEquation}, consider the following PDE on $\M$
\begin{equation}
 \label{eqn:PDEOnTopM}
\begin{aligned}
 \frac{\pd u}{\pd t} - \Delta_{\M} u &= 0 \\
                                 u(x,0) &= u_{0}(x) \quad x \in \M,
\end{aligned}
\end{equation}
where $\M$ is equipped with the metric $(g_{ij}(\cdot, t))_{t \in [0,T]}$ and so
\[
 \Delta_{\M} u = \frac{1}{\sqrt{\abs{g(x,t)}}} \frac{\pd}{\pd x_{i}} \left( g^{ij}(x,t) \sqrt{\abs{g(x,t)}} \frac{\pd u}{\pd x_{j}} \right),
\]
in local coordinates. As in section~\ref{Section:SPDEOnMovingSurfaceGeneral:StochasticHeatEquationOnAGeneralMovingSurface}, let $w(X(x,t), t) = u(x,t)$, where $X(\cdot, t) : \M \to \M(t)$ is the parameterisation which
gives rise to the given metric. Then noting that $X_{t}(x,t) =: v(X(x,t),t)$  and $X_{t} \cdot \nu = v_{\nu}$ one has 
\begin{align*}
 \frac{\pd u}{\pd t} = \frac{\pd}{\pd t} (w(X(x,t),t)) = \frac{\pd w}{\pd t}(X(x,t),t) + (\nabla w)(X(x,t),t) \cdot X_{t} = \pd^{\bullet} w
\end{align*}
and
\begin{align*}
\Delta_{\M(t)}w(y,t) = \frac{1}{\sqrt{\abs{g(x,t)}}} \frac{\pd}{\pd x_{i}} \left( g^{ij}(x,t) \sqrt{\abs{g(x,t)}} \frac{\pd u}{\pd x_{j}} \right) = \Delta_{\M} u.
\end{align*}
This shows that
\[
 \frac{\pd u}{\pd t} - \Delta_{M}u = 0 \quad \mathrm{on} \, \, \M
\]
implies 
\[
 \pd^{\bullet} w - \Delta_{\M(t)}w = 0 \quad \mathrm{on} \, \, \M(t)
\]
which recalling \eqref{eqn:ModelPDEMovingSurfaceGeneral}, is  \emph{almost} the heat equation on $\M(t)$.

Now consider
\begin{equation}
 \label{eqn:PDEOnTopMModified}
\begin{aligned}
 \frac{\pd u}{\pd t} + uH - \Delta_{\M} u &= 0 \\
                                 u(x,0) &= u_{0}(x) \quad x \in \M,
\end{aligned}
\end{equation}
where $H$ is the mean curvature of $\M$ under the given metric $(g_{ij}(\cdot,t))_{t \in [0,T]}$. Then, immediately the above discussion shows that we have the following PDE on $\M(t)$
\[
  \pd^{\bullet} w + wH - \Delta_{\M(t)}w = 0 
\]
which is the heat equation on $\M(t)$ if $\M(t)$ is a hypersurface with evolution completely in the normal direction, with unit speed. Since $g_{ij}(x,t) = f(t) g_{ij}(x,0)$, in this case there is only
movement in the normal direction, but the velocity $X_{t}$ need not have $\abs{X_{t}} = 1$, such a requirement puts a restriction on the function $f$.

The above shows that, under some assumptions, the idea of thinking of one fixed topological manifold and equipping it with a one-parameter of metrics is equivalent to the ideas of chapter~\ref{Chapter:SPDEonMovingSurfaces}, for
when $\M$ is a hypersurface.

However, in the topological manifold case with a given time-dependent metric,
\[
 P_{t} := \frac{\pd}{\pd t} - \Delta_{\M}
\]
is an interesting example of a parabolic operator on $(\M , (g_{ij}(\cdot, t))_{t \in [0,T]}$ in its own right, regardless of whether it has any physical meaning.

For the remainder of this chapter, we will consider one fixed compact, connected, oriented and closed topological manifold $\M$. Here, closed implies that $\M$ is without boundary. We further assume that
$\M$ is of dimension $1 \leq n < \infty$. We will equip $\M$ with a one-parameter family $(g_{ij}(\cdot, t))_{t \in [0,T]}$ of metrics and ask that for each $t \in [0,T]$ the map $x \mapsto g_{ij}(x,t)$ is smooth and for each $x \in \M$ the map $t \mapsto g_{ij}(x,t)$ is continuous.

We call $(\M, g_{ij}(\cdot,t))_{t \in [0,T]}$ the evolution of $\M$ and we will be concerned with defining the stochastic analogue of \eqref{eqn:PDEOnTopM}.

We will see that the new approach to thinking of the evolution of $\M$ is that the noise will be defined on the evolution of $\M$ and so is much more natural than defining the noise on a reference
manifold and mapping the noise forward. Further, requiring that  $t \mapsto g_{ij}(x, t)$ is continuous for every $x \in \M$, will ultimately
allow us to define the notion of a ``random metric'' as presented in section~\ref{Section:A_Parabolic_SPDE_On_A_Randomly_Evolving_Riemannian_Manifold}.

In this chapter we will be using the notation and definitions as presented in chapter~\ref{Chap:SPDEOnRiemannianManifolds}.

\subsection{A general parabolic stochastic partial differential equation on an evolving Riemannian manifold}
\label{Section:A_Parabolic_SPDE_on_an_Evolving_Riemannian_Manifold}
As discussed above, we proceed to define the parabolic generalisation of the stochastic analogue of \eqref{eqn:PDEOnTopM}. To this end, fix $U$-separable Hilbert space and define
\begin{align*}
 V_{t} &:= H^{1}(\M, \rd \nu(g_{t}); \R), \quad t \in [0,T] \\
 H_{t} &:= L^{2}(\M, \rd \nu(g_{t}); \R), \quad t \in [0,T]
\end{align*}
which can be thought of as the closure of $C^{\infty}(\M)$ with respect to the norms defined by
\begin{align*}
 \norm{u}_{V_{t}} &:= \sqrt{\int_{\M}  \abs{u}^{2} + \abs{\nabla u}^{2} \, \rd \nu(g_{t})} \\
 \norm{u}_{H_{t}} &:= \sqrt{\int_{\M}  \abs{u}^{2}  \, \rd \nu(g_{t})}
\end{align*}
respectively. Here, $\rd \nu(g_{t})$ is the Riemannian volume element of $\M$ with respect to the metric $g_{ij}(x,t)$ and, as in chapter~\ref{Chapter:SPDEonMovingSurfaces}, we let $\abs{g(x,t)} $ denote $\det (g_{ij}(x,t))$.

Let $i : U \to H_{0}$ be Hilbert-Schmidt, which by proposition~\ref{prop:HilbertSchmidtMapsAlwaysExist} always exists and let $W$ be a $U$-valued cylindrical $Q$-Wiener process with $Q= I$.

Since we have that $(x,t) \mapsto g_{ij}(x,t)$ is smooth in $x$ and continuous in
$t$, there exists a diffeomorphism 
\[
 \Phi_{t} : (\M, g_{0}) \longrightarrow (\M, g_{t})
\]
for each $t \in [0,T]$ and by the smoothness assumptions there exists $a', b' > 0$ such that
\begin{equation}
 \label{eqn:BoundednessOfJacobian}
a' \leq  \abs{D\Phi_{t}(x)}^{2} \leq b' \quad \mathrm{for \, \, every\, \,} (x,t) \in \M \times [0,T].
\end{equation}
 From this, lemma~\ref{lemma:EquivalenceOfHandL^2withInitialMetric} below, a change of variable and the chain rule, we see that there exists a map $F_{t} : V_{0} \to V_{t}$ which is bounded, linear and bounded away from $0$, uniformly in time and is defined in the natural way by
\[
(F_{t}f)(x)) = f(\Phi^{-1}_{t}(x)) \quad \mathrm{where}\, \, x \in (\M, g_{t}).
\]
Observe that $F_{0}$ is simply the identity mapping. Indeed, lemma~\ref{lemma:EquivalenceOfHandL^2withInitialMetric} and equation \eqref{eqn:BoundednessOfJacobian} implies that there exists $p_{1}, p_{2}, q_{1}, q_{2} > 0$ such that
\begin{equation}
 \label{eqn:BoundsOnLinearMapFromFixedSpaceToTimeDependentSpace}
\begin{aligned}
 p_{1} \norm{u}^{2}_{V_{0}} &\leq \norm{F_{t}u}^{2}_{V_{t}} \leq q_{1} \norm{u}^{2}_{V_{0}} \quad \mathrm{for \, \, every \, \,} t \in [0,T] \\ 
 p_{2} \norm{u}^{2}_{H_{0}} &\leq \norm{F_{t}u}^{2}_{H_{t}} \leq q_{2} \norm{u}^{2}_{H_{0}} \quad \mathrm{for \, \, every \, \,} t \in [0,T] 
\end{aligned}
\end{equation}
noting that indeed $F_{t}$ makes sense as a map from $H_{0}$ to $H_{t}$.
\begin{lemma}
 \label{lemma:EquivalenceOfHandL^2withInitialMetric}
For $t \in [0,T]$ let $u : \M \to \R$. Then $u \in H_{t}$ if and only if $u \in H_{0}$, where in both cases $\M$ is equipped with the metric $g_{ij}(\cdot, t)$.
\end{lemma}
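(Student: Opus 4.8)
The plan is to deduce the statement from the equivalence of the norms $\norm{\cdot}_{H_0}$ and $\norm{\cdot}_{H_t}$, which itself reduces to comparing the Riemannian volume elements $\rd\nu(g_0)$ and $\rd\nu(g_t)$. First I would note that in any chart $(U,\vphi)$ the local volume density $\sqrt{\abs{g(\cdot,t)}}$ is continuous in $(x,t)$ — smoothness in $x$, continuity in $t$, and strict positivity of $\det(g_{ij}(x,t))$ since each $g(x,t)$ is positive definite — and that the ratio $\sqrt{\abs{g(x,t)}/\abs{g(x,0)}}$ is independent of the chosen chart: it is the Radon--Nikodym derivative $\rd\nu(g_t)/\rd\nu(g_0)$, and the Jacobian factors coming from a change of chart cancel in the quotient. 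Hence the chart-wise functions patch to a globally well-defined continuous function $\rho : \M \times [0,T] \to (0,\infty)$ with $\rd\nu(g_t) = \rho(\cdot,t)\,\rd\nu(g_0)$.

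Next, since $\M \times [0,T]$ is compact and $\rho$ is continuous and strictly positive, there are constants $0 < m \le M < \infty$ with $m \le \rho(x,t) \le M$ for all $(x,t) \in \M \times [0,T]$. Therefore, for every $u \in C^\infty(\M)$ and every $t \in [0,T]$,
\[
 m \norm{u}^2_{H_0} = m\int_\M \abs{u}^2 \, \rd\nu(g_0) \le \int_\M \abs{u}^2 \, \rd\nu(g_t) = \norm{u}^2_{H_t} \le M \norm{u}^2_{H_0}.
\]
Thus the two norms are equivalent on $C^\infty(\M)$. Since $H_0$ and $H_t$ are by definition the completions of $C^\infty(\M)$ in these norms, they may both be identified with the measure-theoretic $L^2$-space of the corresponding volume measure; because $\rd\nu(g_t)$ and $\rd\nu(g_0)$ are mutually absolutely continuous with density bounded above and below away from zero, a measurable $u : \M \to \R$ is square-integrable with respect to one iff with respect to the other, which is precisely the asserted equivalence.

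The only subtle point — and hence the main obstacle — is the coordinate-independence of the density ratio, i.e. checking that the chart-wise continuous densities $\sqrt{\abs{g(\cdot,t)}}$ glue to a single global continuous positive function $\rho$; this is where one invokes the transformation rule for the metric components under a change of chart so that the Jacobian contributions cancel. An alternative that sidesteps this gluing would be to use the diffeomorphism $\Phi_t : (\M,g_0) \to (\M,g_t)$ together with the Jacobian bound \eqref{eqn:BoundednessOfJacobian} and a change of variables, but since both that estimate and the map $F_t$ are introduced only after this lemma, the self-contained volume-comparison argument above is the natural one to give here.
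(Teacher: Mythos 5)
Your proof is correct and follows essentially the same route as the paper: both arguments use continuity of $(x,t)\mapsto\sqrt{\abs{g(x,t)}}$ together with compactness of $\M\times[0,T]$ to bound the ratio of the volume densities above and below, and then conclude that the $H_{0}$- and $H_{t}$-norms are equivalent, hence the spaces coincide. The only cosmetic difference is that you package the comparison as a globally defined Radon--Nikodym density $\rho$ and address its chart-independence explicitly, whereas the paper works chart-by-chart through a fixed time-independent partition of unity; the substance is identical.
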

\begin{proof}
 Since $\M \times [0,T]$ is compact and $(x, t) \mapsto g_{ij}(x,t)$ is continuous it follows that $(x,t) \mapsto \sqrt{\abs{g(x,t)}}$ is continuous. Hence, there exists $a_{1}, b_{1} > 0$ such that
\begin{equation}
 \label{eqn:EstimateOnDeterminantOfMetric}
a_{1} \leq \sqrt{\abs{g(x,t)}} \leq b_{1} \quad \mathrm{for \, \, every \, \,} (x,t) \in \M \times [0,T].
\end{equation}
By \cite{hebey2000}, for $w : \M \to \R$ sufficiently smooth
\[
 \int_{\M} w \, \rd \nu(g_{t}) = \sum_{j \in I} \int_{\vphi_{j}(U_{j})} (\alpha_{j} \sqrt{\abs{g(\cdot, t)}}w) \circ \vphi^{-1}_{j} \, \rd x
\]
where $\rd x$ is the Lebesgue volume element on $\R^{n}$ and $(U_{j}, \vphi_{j}, \alpha_{j})_{j \in I}$ is a partition of unity subordinate to the atlas $(U_{j}, \vphi_{j})_{j \in I}$, that is
\begin{enumerate}[(i)]
 \item $(\alpha_{j})_{j}$ is a smooth partition of unity subordinate to the covering $(U_{j})_{j}$;
 \item $(U_{j}, \vphi_{j})_{j}$ is an atlas of $\M$ and
 \item for every $j \in I$, $\mathrm{supp}(\alpha_{j}) \subset U_{j}$.
\end{enumerate}
Note that the atlas of $\M$ is independent of the metric $g$ and so the above charts $\vphi_{j}$ are independent of time.

Thus, if $u \in H_{t}$ then
\begin{align*}
 \int_{\M} \abs{u}^{2} \, \rd \nu(g_{0})  &\leq \frac{b_{1}}{a^{}_{1}} \int_{\M} \frac{\sqrt{\abs{g(x,t)}}}{\sqrt{\abs{g(x,0)}}} \abs{u}^{2} \, \rd \nu(g_{0}) \\
                                             & = \frac{b_{1}}{a^{}_{1}} \sum_{j \in I} \int_{\vphi_{j}(U_{j})}(\alpha_{j} \sqrt{\abs{g(\cdot,t)}} \abs{u}^{2}) \circ \vphi^{-1}_{j} \, \rd x \\
                                             &=  \frac{b_{1}}{a^{}_{1}} \int_{\M}  \abs{u}^{2} \, \rd \nu(g_{t}) \\ 
                                             &< \infty. 
\end{align*}
Hence
\begin{equation}
\label{eqn:BoundOnL^2withInitialMetricInTermsOfHNorm}
\norm{u}^{2}_{H_{0}}\leq \frac{b_{1}}{a^{}_{1}} \norm{u}^{2}_{H_{t}}.
\end{equation}

Conversely, if $u \in H_{0}$ then
\begin{align*}
 \int_{\M}\abs{u}^{2} \, \rd \nu(g_{t}) 
                                                               &\leq \frac{b^{}_{1}}{a_{1}} \int_{\M}\frac{\sqrt{\abs{g(x,0)}}}{\sqrt{\abs{g(x,t)}}} \abs{u}^{2} \, \rd \nu(g_{t}) \\
                                                               &= \frac{b^{}_{1}}{a_{1}} \sum_{j \in I} \int_{\vphi_{j}(U_{j})} (\alpha_{j} \sqrt{\abs{g(\cdot, 0)}} \abs{u}^{2}) \circ \vphi^{-1}_{j} \, \rd x \\
                                                               & = \frac{b^{}_{1}}{a_{1}} \int_{\M} \abs{u}^{2} \, \rd \nu(g_{0}) \\
                                                               & < \infty.
\end{align*}
This completes the proof and shows that
\begin{equation}
 \label{eqn:BoundOnHInTermsOfL^2withInitialMetricNorm}
\norm{u}^{2}_{H_{t}} \leq \frac{b^{}_{1}}{a_{1}}\norm{u}^{2}_{H_{0}}
\end{equation}
\end{proof}
\noindent Consider the following parabolic SPDE on $(\M, g_{t})$ (which can be thought of as the parabolic stochastic generalisation of \eqref{eqn:PDEOnTopM}) as
\begin{equation}
 \label{eqn:SPDEonEvolvingManifold}
\begin{aligned}
 \rd u & = Au \, \rd t+  F_{t} i \, \rd W(t) \\
     u(0) &= u_{0}
\end{aligned}
\end{equation}
with Gelfand triple $V_{t} \subset H_{t} \subset V^{*}_{t}$, where 
\[
 Au:= \mathrm{div}_{\M}(a_{i} (\nabla u)_{i}) - b_{i}\pd_{i}u - \tilde{c}u
\]
with $a, b \in C^{1}(\M ; \R^{n})$.  We assume there exists $\overline{a}, \overline{b} > 0$ such that
\[
 \overline{a} \leq a_{k} \leq \overline{b} \quad \mathrm{for \, \, every} \, \, k \in \{1, \cdots, n\}
\]
and
\[
 \mathrm{div}_{\M}(b) < 0, \quad b \in L^{\infty}(\M).
\]
We suppose that $\tilde{c} \in L^{\infty}(\M)$ and $u_{0} \in L^{2}(\Omega, \F_{0}, \P ; H_{0})$.

Equation \eqref{eqn:SPDEonEvolvingManifold} is interpreted as an SPDE on $(\M, g_{0})$ of the following form with Gelfand triple $V_{0} \subset H_{0} \subset V^{*}_{0}$
\begin{equation}
 \label{eqn:SPDEOnReferenceManifold}
\begin{aligned}
 \rd v &= F^{*}_{t}AF_{t} v \, \rd t + i \, \rd W(t) \\
v(0) & = u_{0}.
\end{aligned}
\end{equation}
\begin{defn}
 \label{defn:SolutionToSPDEonEvolvingManifold}
Suppose that there exists a solution to \eqref{eqn:SPDEOnReferenceManifold}, in the sense of definition~\ref{defn:SolnToSPDEOnM}. Call the solution $v$. Then we define the solution to \eqref{eqn:SPDEonEvolvingManifold}, $u$,
by
\[
 u(t, \omega)(x) := (F_{t}(v(t, \omega)))(x)
\]
where $x \in (\M , g_{t})$,$\,\, t \in [0,T]$ and $\omega \in \Omega$. For brevity we shall write $u = F_{t}v$ with the above definition in mind.
\end{defn}
The approach above is completely analogous to that of chapter~\ref{Chapter:SPDEonMovingSurfaces} and that \eqref{eqn:SPDEOnReferenceManifold} is completely natural, for the reader
may verify that $F^{*}_{t} F_{t} = I_{H_{0}}$ where $I_{H_{0}}$ is the identity operator on $H_{0}$.

The following shows that there is a unique solution to \eqref{eqn:SPDEOnReferenceManifold}.
 
\begin{thm}
 \label{thm:ExistenceAndUniquenessToSPDEOnReferenceManifold}
Let $u_{0} \in L^{2}(\Omega, \F_{0}, \P ; H_{0})$. Then there exists a unique solution of \eqref{eqn:SPDEOnReferenceManifold} in the sense of definition~\ref{defn:SolnToSPDEOnM}. Moreover,
\[
 \E \big[ \sup_{t \in [0,T]}\norm{v(t)}^{2}_{H_{0}} \big] < \infty.
\]
Consequently, by lemma~\ref{lemma:EquivalenceOfHandL^2withInitialMetric} we have
\[
 \E \big[ \sup_{t \in [0,T]} \norm{u(t)}^{2}_{H_{t}} \big] < \infty.
\]
\end{thm}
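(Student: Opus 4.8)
The plan is to deduce everything from Theorem~\ref{thm:ExistenceUniquenessToSPDEonM} applied to the fixed Gelfand triple $V_{0} \subset H_{0} \subset V^{*}_{0}$ with the operators $\tilde{A} := F^{*}_{t}AF_{t}$ and $B := i$. Once existence, uniqueness and the bound $\E[\sup_{t}\norm{v(t)}^{2}_{H_{0}}] < \infty$ for the solution $v$ of \eqref{eqn:SPDEOnReferenceManifold} are in hand, the estimate for $u = F_{t}v$ is immediate: by \eqref{eqn:BoundsOnLinearMapFromFixedSpaceToTimeDependentSpace}, $\norm{u(t)}^{2}_{H_{t}} = \norm{F_{t}v(t)}^{2}_{H_{t}} \leq q_{2}\norm{v(t)}^{2}_{H_{0}}$, so taking $\sup_{t \in [0,T]}$ and then $\E$ gives $\E[\sup_{t}\norm{u(t)}^{2}_{H_{t}}] \leq q_{2}\,\E[\sup_{t}\norm{v(t)}^{2}_{H_{0}}] < \infty$. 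Progressive measurability of $\tilde{A}$ and $B$ is trivial since neither depends on $\omega$, and $\norm{B}^{2}_{L_{2}(U,H_{0})} = \norm{i}^{2}_{L_{2}(U,H_{0})} < \infty$ as $i$ is Hilbert--Schmidt; so the real work is to check H1--H4 of assumption~\ref{Ass:HypothesesOnA&B}.

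First I would note that $A$ is linear (the coefficients $a$, $b$, $\tilde{c}$ are prescribed and do not involve the unknown), hence $\tilde{A} = F^{*}_{t}AF_{t}$ is linear and H1 holds automatically, exactly as in the linear examples of sections~\ref{Section:SPDEsOnManifolds:StochasticHeatEquation} and \ref{Section:SPDEOnMovingSurfaceGeneral:StochasticHeatEquationOnAGeneralMovingSurface}. Next, using $F^{*}_{t}$ as the adjoint of $F_{t}$, I would pass to the time-$t$ triple via the identity $\langle F^{*}_{t}AF_{t}w, z\rangle_{V^{*}_{0},V_{0}} = \langle AF_{t}w, F_{t}z\rangle_{V^{*}_{t},V_{t}}$, so that it suffices to analyse the bilinear form $\phi \mapsto \langle A\phi, \phi\rangle_{V^{*}_{t},V_{t}}$ on $(\M, g_{t})$ and then pull the resulting norms back through \eqref{eqn:BoundsOnLinearMapFromFixedSpaceToTimeDependentSpace}. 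Integrating by parts on the closed manifold $(\M,g_{t})$ (no boundary terms, as in the proof of Theorem~\ref{thm:ExistenceUniquenessOfSolnToStochasticHeatEquation}), one has for $\phi \in C^{\infty}(\M)$
\[
\langle A\phi, \phi\rangle_{V^{*}_{t},V_{t}} = -\int_{\M} a_{i}(\pd_{i}\phi)^{2}\, \rd\nu(g_{t}) - \int_{\M} b_{i}(\pd_{i}\phi)\phi\, \rd\nu(g_{t}) - \int_{\M}\tilde{c}\,\phi^{2}\, \rd\nu(g_{t}),
\]
and a further integration by parts on the middle term gives $-\int_{\M} b_{i}(\pd_{i}\phi)\phi\, \rd\nu(g_{t}) = \tfrac{1}{2}\int_{\M}\mathrm{div}_{\M}(b)\,\phi^{2}\, \rd\nu(g_{t}) \leq 0$ by the hypothesis $\mathrm{div}_{\M}(b) < 0$. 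The ellipticity $a_{i} \geq \overline{a} > 0$ controls the principal term by $-\overline{a}\int_{\M}\abs{\tilde\nabla\phi}^{2}\,\rd\nu(g_{t})$ (with $\abs{\tilde\nabla\phi}^{2} := \sum_{i}(\pd_{i}\phi)^{2}$ read through a fixed atlas and partition of unity, exactly as in remark~\ref{rem:PropertiesOfMetricTensor}), while $-\int_{\M}\tilde{c}\phi^{2} \leq \norm{\tilde{c}}_{L^{\infty}(\M)}\norm{\phi}^{2}_{H_{t}}$; comparing $\abs{\tilde\nabla\phi}^{2}$ with $\abs{\nabla\phi}^{2} = g^{ij}\pd_{i}\phi\,\pd_{j}\phi$ using compactness of $\M \times [0,T]$ and positive-definiteness of $(g^{ij})$ yields constants uniform in $t$, so $\langle A\phi,\phi\rangle_{V^{*}_{t},V_{t}} \leq c_{1}'\norm{\phi}^{2}_{H_{t}} - c_{2}'\norm{\phi}^{2}_{V_{t}}$.

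Pulling this back via \eqref{eqn:BoundsOnLinearMapFromFixedSpaceToTimeDependentSpace}, and absorbing $\norm{i}^{2}_{L_{2}(U,H_{0})}$ into the additive term $f(t)$, gives coercivity H3 with $\alpha = 2$; the same one-sided estimate applied to $w = u - v$ (the $L_{2}$-term in H2 vanishes since $B(\cdot,u) = B(\cdot,v)$), after discarding the favourable $-c_{2}'\norm{w}^{2}_{V_{0}}$ term, gives weak monotonicity H2 with $c$ proportional to $c_{1}'q_{2}$; and H4 follows by Cauchy--Schwarz on $\langle A\phi,\psi\rangle_{V^{*}_{t},V_{t}}$ using boundedness of $a$, $b$, $\tilde{c}$, whence $\abs{\langle\tilde{A}w, z\rangle} \leq C\norm{F_{t}w}_{V_{t}}\norm{F_{t}z}_{V_{t}} \leq Cq_{1}\norm{w}_{V_{0}}\norm{z}_{V_{0}}$, i.e. $\norm{\tilde{A}w}_{V^{*}_{0}} \leq c_{3}\norm{w}_{V_{0}}$ with $g \equiv 0$; a density argument extends the inequalities from $C^{\infty}(\M)$ to $V_{0}$. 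Applying Theorem~\ref{thm:ExistenceUniquenessToSPDEonM} then yields the unique solution $v$ and its supremum bound, and lemma~\ref{lemma:EquivalenceOfHandL^2withInitialMetric} together with \eqref{eqn:BoundsOnLinearMapFromFixedSpaceToTimeDependentSpace} completes the proof as above. The main obstacle I anticipate is the bookkeeping required to make the metric- and chart-dependent expressions rigorous and, above all, to secure constants in H2--H4 that are genuinely \emph{uniform in} $t \in [0,T]$; this is where the continuity of $t \mapsto g_{ij}(x,t)$, compactness of $\M \times [0,T]$, the uniform bounds \eqref{eqn:EstimateOnDeterminantOfMetric} and \eqref{eqn:BoundednessOfJacobian}, and the equivalences \eqref{eqn:BoundsOnLinearMapFromFixedSpaceToTimeDependentSpace} must all be combined, with a secondary technical point being to pin down the adjoint $F^{*}_{t}$ precisely enough to justify $\langle F^{*}_{t}AF_{t}w, z\rangle_{V_{0}} = \langle AF_{t}w, F_{t}z\rangle_{V_{t}}$.
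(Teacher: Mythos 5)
Your proposal is correct and follows essentially the same route as the paper: verify H1--H4 for $F^{*}_{t}AF_{t}$ and $i$ on the fixed triple $V_{0} \subset H_{0} \subset V^{*}_{0}$ via the identity $\inner{F^{*}_{t}AF_{t}w}{z} = \inner{AF_{t}w}{F_{t}z}$, integration by parts on the closed manifold with $\mathrm{div}_{\M}(b) < 0$ killing the first-order term, ellipticity of $a$ for coercivity, and the uniform norm equivalences \eqref{eqn:BoundsOnLinearMapFromFixedSpaceToTimeDependentSpace} to transfer constants back to time zero before invoking theorem~\ref{thm:ExistenceUniquenessToSPDEonM}. The concluding estimate for $u = F_{t}v$ via $\norm{F_{t}v}^{2}_{H_{t}} \leq q_{2}\norm{v}^{2}_{H_{0}}$ matches the paper's appeal to lemma~\ref{lemma:EquivalenceOfHandL^2withInitialMetric}.
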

\begin{proof}
As before, we need to show that $F^{*}_{t}AF_{t}$ and $i$ satisfy H1 to H4 of assumption~\ref{Ass:HypothesesOnA&B}, and then apply theorem~\ref{thm:ExistenceUniquenessToSPDEonM} to see existence and uniqueness.
We will see that this is straightforward.
\begin{enumerate}
 \item Clearly as $A$ is linear and $F_{t}$ is linear and the composition of linear maps is linear we see that $F^{*}_{t}AF_{t}$ is linear and so H1 is satisfied.
 \item To see H2, let $u, v \in V_{0}$. Then as $B:= i$ is independent of $u, v \in V_{0}$ we have that $\norm{B(\cdot, u) - B(\cdot, v)}_{L_{2}(U,H_{0})} = 0$ and so
\begin{align*}
 \inner{F^{*}_{t}AF_{t}u - F^{*}_{t}AF_{t}v}{u-v} &= \inner{AF_{t}(u-v)}{F_{t}(u-v)}  \leq  - \bar{a}\norm{\nabla (F_{t}(u-v))}^{2}_{H_{t}}  \\ &+ \frac{1}{2}\int_{\M} \mathrm{div}_{\M}(b) \, (F_{t}u-F_{t}v)^{2} \, \rd \nu(g_{t}) \\
                      &+ \norm{\tilde{c}}_{L^{\infty}} \norm{F_{t}u-F_{t}v}^{2}_{H_{t}}  \\
                          & \leq q_{2}\norm{\tilde{c}}_{L^{\infty}} \norm{u-v}^{2}_{H_{0}}
\end{align*}
since $\mathrm{div}_{\M}(b) < 0$. So H2 is satisfied with $c = 2 q_{2}\norm{\tilde{c}}_{L^{\infty}} > 0$.
 \item For H3, let $v \in V_{0}$. Then by the above and using the definition of the $V_{t}$-norm
\[
 \inner{F^{*}_{t}AF_{t}v}{v}     \leq -\bar{a}( \norm{F_{t}v}^{2}_{V_{t}} - \norm{F_{t}v}^{2}_{H_{t}}) + \norm{\tilde{c}}_{L^{\infty}} \norm{F_{t}v}^{2}_{H_{t}}. 
\]
 Hence
\[
 2 \inner{F^{*}_{t}AF_{t}v}{v} + \norm{i}^{2}_{L_{2}(U, H_{0})}  \leq c_{1} \norm{v}^{2}_{H_{0}} - c_{2}\norm{v}^{2}_{V_{0}} + c^{2}_{4}
\]
where $c_{4} > 0$ exists and $\norm{i}_{L_{2}(U, H_{0})} \leq c_{4}$ as $i : U \to H_{0}$ is Hilbert-Schmidt. So we see that H3 is satisfied with $\alpha = 2$, $c_{1} = 2q_{2}(\norm{\tilde{c}}_{L^{\infty}} + \bar{a}) > 0,$ 
$c_{2} = 2p_{1}\bar{a} > 0$ and
$f(t) = c^{2}_{4}$.
 \item Finally, for H4 let $u, v \in V_{0}$. Then
\begin{align*}
 \abs{\inner{F^{*}_{t}AF_{t}u}{v}} &\leq \bar{b} \norm{\nabla F_{t}u}_{H_{t}} \norm{\nabla F_{t}v}_{H_{t}} + \norm{b}_{L^{\infty}} \norm{\nabla F_{t}u}_{H_{t}} \norm{F_{t}v}_{H_{t}} \\ &+ \norm{\tilde{c}}_{L^{\infty}} \norm{F_{t}u}_{H} \norm{F_{t}v}_{H_{t}}\\
                     &\leq 3 \max\{\bar{b}, \norm{b}_{L^{\infty}}, \norm{\tilde{c}}_{L^{\infty}}\} \norm{F_{t}u}_{V_{t}} \norm{F_{t}v}_{V_{t}} \\
                     & \leq 3q_{1}\max\{\bar{b}, \norm{b}_{L^{\infty}}, \norm{\tilde{c}}_{L^{\infty}}\} \norm{u}_{V_{0}} \norm{v}_{V_{0}}
\end{align*}
by definition of the $V$-norm and so
$
 \norm{F^{*}_{t}AF_{t}u}_{V^{*}_{0}} \leq c_{3}\norm{u}_{V_{0}}
$
which is H4 where $c_{3} = 3q_{1} \max\{\bar{b}, \norm{b}_{L^{\infty}}, \norm{\tilde{c}}_{L^{\infty}}\} > 0$ and $g(t) \equiv 0$.
\end{enumerate}
Thus, applying theorem~\ref{thm:ExistenceUniquenessToSPDEonM} completes the proof.
\end{proof}
\begin{rem}
The uniqueness of a solution to \eqref{eqn:SPDEonEvolvingManifold}  is guaranteed  up to the choice of map $F_{t}$. Arguably, the $F_{t}$ which we chose is the most
natural and there is a natural choice of the diffeomorphism $\Phi_{t}$ which ignores any concept of rotation.
\end{rem}

\subsection{A general parabolic stochastic partial differential equation on a randomly evolving Riemannian manifold}
\label{Section:A_Parabolic_SPDE_On_A_Randomly_Evolving_Riemannian_Manifold}

Recall section~\ref{Section:A_Parabolic_SPDE_on_an_Evolving_Riemannian_Manifold} where we only asked that $(x,t) \mapsto g_{ij}(x,t)$ is smooth in $x$ and continuous in $t$. This really allows some freedom in the following.

We still assume that $\M$ is a compact, connected, oriented and closed topological manifold of dimension $1 \leq n < \infty$. As an example of a randomly evolving Riemannian manifold, we wish to consider the random isotropic evolution of $\M$. This means we consider metrics of the form
\[
 g_{ij}(x,t) := f(t) g_{ij}(x,0) \quad x \in \M
\]
where $g_{ij}(x,0)$ is a given metric such that $x \mapsto g_{ij}(x,0)$ is smooth and $f$ is some random function, namely the solution of a diffusion equation on $\R$, such that $t \mapsto f(t)$
is almost surely continuous. This gives that $(x,t) \mapsto g_{ij}(x,t)$ is smooth in $x$ and almost-surely continuous in $t$. Equipping $\M$ with this family of metrics and fixing a realisation of $f$, gives the random isotropic
evolution of $\M$, whilst retaining the smooth structure of the manifold.

Recalling that $g_{ij}$ should be positive definite, a suitable choice of $f$ would have that $f(t) > 0 $ for every $t \in [0,T]$, $\P$-a.s. In order for us to mimic the previous section,
we want the existence of constants $a, b > 0$ such that 
\[
a \leq f(\omega, t) \leq b \quad \mathrm{for \, \, every \, \,} t \in [0,T], \, \, \P-\mathrm{a.s}\, \, \omega \in \Omega,                                                         
\]
and so a function $f$ satisfying the above would be preferable. An example of $f$ is given in the following. Let $(\Omega, \F , \P)$ be a complete probability space and let $B(t)$ be real-valued Brownian motion on $(\Omega, \F, \P)$. Fix $T < \infty$. Then
\begin{enumerate}
 \item $B(0) = 0$ $\P$-a.s;
 \item $t \mapsto B(t)$ is $\P$-a.s continuous;
 \item $B$ has independent increments.
\end{enumerate}
Consider the following stochastic differential equation on $\R$, interpreted in the It\={o} sense
\begin{equation}
 \label{eqn:GeometricBrownianMotion}
\begin{aligned}
 \rd f(t) &= r f(t) \, \rd t + \sigma f(t) \, \rd B(t) \\
     f(0) & = 1
\end{aligned}
\end{equation}
where $r, \sigma \in \R$ are constants such that $ r - \frac{\sigma^{2}}{2} < 0$. Then by It\={o}'s formula (\cite{oksendal2003}) one has that
\[
 f(t) = \exp((r - \frac{\sigma^{2}}{2})t + \sigma B(t)).
\]
Clearly, $f(t) > 0$ for every $t \in [0,T]$ and $f(t) < \infty$ for every $t \in [0,T]$, $\P$-a.s. We also have that $t \mapsto f(t)$ is $\P$-a.s continuous and so for each fixed $\bar{\omega} \in \Omega$ we consider the modification of $B$ such that $t \mapsto f(t)$ is continuous, so
 there exists $a_{\bar{\omega}} , b_{\bar{\omega}} > 0$ such that
\[
 a_{\bar{\omega}} \leq f(\bar{\omega},t) \leq b_{\bar{\omega}} \quad \mathrm{for \, \, every} \, \, t \in [0,T].
\]
Thus, fix $\bar{\omega} \in \Omega$ such that $t \mapsto f(t)$ is continuous\footnote{By which we mean we consider the continuous version of the Brownian motion (which exists by Kolmogrov's continuity theorem (\cite{oksendal2003}, theorem 2.2.3)) and fix a realisation.}. Consider the one-parameter of metrics defined by
\[
 g^{\bar{\omega}}_{ij}(x,t) := f(\bar{\omega}, t) g_{ij}(x,0)
\]
where $g_{ij}(x,0)$ is given and $x \mapsto g_{ij}(x,0)$ is smooth. We take $T < \infty$ sufficiently small so that no topology changes occur, such as pinching. Since $f(\bar{\omega}, \cdot)$ is uniformly bounded away from $0$ and $x \mapsto g_{ij}(x,0)$ is smooth, we can define the 
diffeomorphism
\[
 \Phi^{\bar{\omega}}_{t} : (\M, g^{\bar{\omega}}_{0}) \to (\M, g^{\bar{\omega}}_{t})
\]
by
\[
 \Phi^{\bar{\omega}}_{t}(x) := x\sqrt{f(\bar{\omega}, t)} 
\]
where $x \in (\M, g_{0})$. Note here that $D (\Phi^{\bar{\omega}}_{t})^{-1} = 1/ \sqrt{f(\bar{\omega}, t)}$ and so an estimate analogous to \eqref{eqn:BoundednessOfJacobian} holds. Analogous to section~\ref{Section:A_Parabolic_SPDE_on_an_Evolving_Riemannian_Manifold}, we define
\begin{align*}
 V^{\bar{\omega}}_{t} &:= H^{1}(\M, \rd \nu(g^{\bar{\omega}}_{t}); \R), \quad t \in [0,T] \\
 H^{\bar{\omega}}_{t} &:= L^{2}(\M, \rd \nu(g^{\bar{\omega}}_{t}); \R), \quad t \in [0,T]
\end{align*}
and fix $U$ a separable Hilbert space, letting $i : U \to H^{\bar{\omega}}_{0}$ be Hilbert-Schmidt. Let $W$ be a $U$-valued cylindrical $Q$-Wiener process with $Q = I$. We define the natural map between $V^{\bar{\omega}}_{0}$ and $V^{\bar{\omega}}_{t}$ by
\[
 F^{\bar{\omega}}_{t} : V^{\bar{\omega}}_{0} \longrightarrow V^{\bar{\omega}}_{t} \quad (F^{\bar{\omega}}_{t}u)(x) := u( x / \sqrt{f(\bar{\omega}, t)})
\]
where $x \in (\M, g^{\bar{\omega}}_{t})$.  This defines a natural map between $H^{\bar{\omega}}_{0}$ and $H^{\bar{\omega}}_{t}$ and an analogous inequality to \eqref{eqn:BoundsOnLinearMapFromFixedSpaceToTimeDependentSpace} holds, with the
$p_{i}, q_{i}$, ($i = 1,2$), dependent on $\bar{\omega}$.

We now equip $\M$ with this one-parameter family of metrics $(g^{\bar{\omega}}_{ij}(\cdot,t))_{t \in [0,T]}$ and consider
the following general parabolic SPDE on $(\M, g^{\bar{\omega}}_{t})$
\begin{equation}
 \label{eqn:ParabolicSPDEOnRandomM}
\begin{aligned}
 \rd u &= A^{\bar{\omega}} u \, \rd t + F^{\bar{\omega}}_{t}i \, \rd W(t) \\
     u(0)&= u_{0}
\end{aligned}
\end{equation}
with Gelfand triple $V^{\bar{\omega}}_{t} \subset H^{\bar{\omega}}_{t} \subset V^{^{\bar{\omega}}*}_{t}$, where 
\[
 A^{\bar{\omega}}u:= \mathrm{div}^{\bar{\omega}}_{\M}(a_{i} (\nabla^{\bar{\omega}} u)_{i}) - b_{i}\pd^{\bar{\omega}}_{i}u - \tilde{c}u
\]
with $a, b \in C^{1}(\M ; \R^{n})$  and there exists $\overline{a}, \overline{b} > 0$ such that
\[
 \overline{a} \leq a_{k} \leq \overline{b} \quad \mathrm{for \, \, every} \, \, k \in \{1, \cdots, n\}
\]
and
\[
 \mathrm{div}^{\bar{\omega}}_{\M}(b) < 0, \quad b \in L^{\infty}(\M).
\]
We suppose that $\tilde{c} \in L^{\infty}(\M)$ and $u_{0} \in L^{2}(\Omega, \F_{0}, \P ; H^{\bar{\omega}}_{0})$. Here we make the implicit assumption that for any realisation $\omega$, $\mathrm{div}^{\omega}_{\M}(b) < 0$.

As before, equation~\ref{eqn:ParabolicSPDEOnRandomM} is interpreted as an SPDE on $(\M, g^{\bar{\omega}}_{0})$ of the following form with Gelfand triple $V^{\bar{\omega}}_{0} \subset H^{\bar{\omega}}_{0} \subset V^{\bar{\omega}*}_{0}$
\begin{equation}
 \label{eqn:ParabolicSPDEOnRandomMReference}
\begin{aligned}
 \rd v &= F^{\bar{\omega}*}_{t}AF^{\bar{\omega}}_{t} u \, \rd t + i \, \rd W(t) \\
     v(0) &= u_{0}
\end{aligned}
\end{equation}
and the solution to \eqref{eqn:ParabolicSPDEOnRandomM} $u$, is defined as (cf definition~\ref{defn:SolutionToSPDEonEvolvingManifold})
\[
 u(t, \omega)(x) := (F^{\bar{\omega}}_{t}(v(t,\omega)))(x),
\]
where $t \in [0,T], x \in (\M, g^{\bar{\omega}}_{t})$ and $\omega \in \Omega$.
\begin{thm}
 \label{thm:ExistenceAndUniquenessToSolnToSPDEOnRandomM}
Let $u_{0} \in L^{2}(\Omega, \F_{0}, \P ; H_{0})$. Then there exists a unique solution (in the sense of definition~\ref{defn:SolnToSPDEOnM}) to \eqref{eqn:ParabolicSPDEOnRandomMReference} and
\[
 \E \big[ \sup_{t \in [0,T]} \norm{v(t)}^{2}_{H^{\bar{\omega}}_{0}} \big] < \infty.
\]
Consequently, by lemma~\ref{lemma:EquivalenceOfHandL^2withInitialMetric} we have
\[
 \E \big[ \sup_{t \in [0,T]} \norm{u(t)}^{2}_{H^{\bar{\omega}}_{t}} \big] < \infty.
\]

\end{thm}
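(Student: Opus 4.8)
The plan is to reduce to Theorem~\ref{thm:ExistenceAndUniquenessToSPDEOnReferenceManifold}. The key observation is that once a realisation $\bar{\omega}\in\Omega$ of the driving Brownian motion $B$ is frozen, the one-parameter family of metrics $(g^{\bar{\omega}}_{ij}(\cdot,t))_{t\in[0,T]}$ is deterministic, smooth in $x$ and continuous in $t$, so the set-up of Section~\ref{Section:A_Parabolic_SPDE_on_an_Evolving_Riemannian_Manifold} applies word for word. Concretely, I would first fix $\bar{\omega}$ for which $t\mapsto f(\bar{\omega},t)$ is continuous; since $f(\bar{\omega},\cdot)$ is then a strictly positive continuous function on the compact set $[0,T]$, there exist $a_{\bar{\omega}},b_{\bar{\omega}}>0$ with $a_{\bar{\omega}}\le f(\bar{\omega},t)\le b_{\bar{\omega}}$ on $[0,T]$. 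Because $D(\Phi^{\bar{\omega}}_{t})^{-1}=1/\sqrt{f(\bar{\omega},t)}$, this gives the analogue of \eqref{eqn:BoundednessOfJacobian}, and hence (via Lemma~\ref{lemma:EquivalenceOfHandL^2withInitialMetric} applied to the fixed family $g^{\bar{\omega}}$, a change of variables and the chain rule) the analogue of \eqref{eqn:BoundsOnLinearMapFromFixedSpaceToTimeDependentSpace}: there exist constants $p_{1},p_{2},q_{1},q_{2}>0$ (depending on $\bar{\omega}$) such that $p_{1}\norm{u}^{2}_{V^{\bar{\omega}}_{0}}\le\norm{F^{\bar{\omega}}_{t}u}^{2}_{V^{\bar{\omega}}_{t}}\le q_{1}\norm{u}^{2}_{V^{\bar{\omega}}_{0}}$ for all $t\in[0,T]$, and likewise for the $H$-norms; in particular $H^{\bar{\omega}}_{t}$ and $H^{\bar{\omega}}_{0}$ coincide as sets with uniformly equivalent norms.

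With these bounds in hand, I would note that $i:U\to H^{\bar{\omega}}_{0}$ is Hilbert--Schmidt and, for the fixed $\bar{\omega}$, $F^{\bar{\omega}}_{t}$ is a bounded linear operator not depending on the probability space carrying $W$, so $t\mapsto F^{\bar{\omega}}_{t}i$ is a predictable $L_{2}(U,H^{\bar{\omega}}_{0})$-valued process and the stochastic integral in \eqref{eqn:ParabolicSPDEOnRandomMReference} is well defined. It then suffices, by Theorem~\ref{thm:ExistenceUniquenessToSPDEonM}, to check that the operator $F^{\bar{\omega}*}_{t}A^{\bar{\omega}}F^{\bar{\omega}}_{t}$ on the Gelfand triple $V^{\bar{\omega}}_{0}\subset H^{\bar{\omega}}_{0}\subset V^{\bar{\omega}*}_{0}$ and $B:=i$ satisfy H1--H4 of Assumption~\ref{Ass:HypothesesOnA&B}. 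This is done exactly as in the proof of Theorem~\ref{thm:ExistenceAndUniquenessToSPDEOnReferenceManifold}: H1 follows from linearity of $A^{\bar{\omega}}$ and $F^{\bar{\omega}}_{t}$; H2 from $\mathrm{div}^{\bar{\omega}}_{\M}(b)<0$, $\tilde{c}\in L^{\infty}(\M)$ and the norm equivalence, yielding $c=2q_{2}\norm{\tilde{c}}_{L^{\infty}}$; H3 from $\bar{a}\le a_{k}$ together with the definition of the $V^{\bar{\omega}}_{t}$-norm and the bound $\norm{i}_{L_{2}(U,H^{\bar{\omega}}_{0})}\le c_{4}$, yielding $\alpha=2$, $c_{1}=2q_{2}(\norm{\tilde{c}}_{L^{\infty}}+\bar{a})$, $c_{2}=2p_{1}\bar{a}$ and the choice $c_{4}^{2}$ for the integrable process in H3; and H4 from $a_{k}\le\bar{b}$, $b\in L^{\infty}(\M)$, $\tilde{c}\in L^{\infty}(\M)$ and the norm equivalence, yielding a finite $c_{3}$ and $g\equiv 0$. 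Applying Theorem~\ref{thm:ExistenceUniquenessToSPDEonM} then gives a unique solution $v$ of \eqref{eqn:ParabolicSPDEOnRandomMReference} with $\E[\sup_{t\in[0,T]}\norm{v(t)}^{2}_{H^{\bar{\omega}}_{0}}]<\infty$, and setting $u=F^{\bar{\omega}}_{t}v$ and using $\norm{u(t)}^{2}_{H^{\bar{\omega}}_{t}}\le q_{2}\norm{v(t)}^{2}_{H^{\bar{\omega}}_{0}}$ (equivalently Lemma~\ref{lemma:EquivalenceOfHandL^2withInitialMetric}), taking suprema and expectations gives the asserted bound on $u$.

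The only feature that is genuinely new compared with Theorem~\ref{thm:ExistenceAndUniquenessToSPDEOnReferenceManifold}, and the point I would be most careful to state precisely, is the bookkeeping of the two sources of randomness. One has to make explicit that the realisation $\bar{\omega}$ of $f$ is frozen \emph{before} one solves the SPDE driven by $W$, so that the geometric quantities $a_{\bar{\omega}},b_{\bar{\omega}},p_{i},q_{i}$ and the constants $c,c_{1},c_{2},c_{3}$ appearing in H1--H4 are bona fide deterministic constants independent of $t$ and of the $W$-randomness (even though they may depend on $\bar{\omega}$); only then does Assumption~\ref{Ass:HypothesesOnA&B} apply. This yields a ``quenched'' statement, valid for $\P$-almost every admissible $\bar{\omega}$; a truly ``annealed'' formulation on a product probability space carrying both $B$ and $W$ would require additional joint-measurability arguments, which I would not pursue here. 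Modulo this remark, there is no substantive obstacle: the proof is a verbatim transcription of that of Theorem~\ref{thm:ExistenceAndUniquenessToSPDEOnReferenceManifold} with $g$ replaced by $g^{\bar{\omega}}$ throughout.
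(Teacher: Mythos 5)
Your proposal is correct and follows essentially the same route as the paper, which simply declares the proof ``completely analogous'' to Theorem~\ref{thm:ExistenceAndUniquenessToSPDEOnReferenceManifold} once $\bar{\omega}$ is fixed and the $\bar{\omega}$-dependent bounds replace the deterministic ones. Your explicit remark that the realisation of $f$ must be frozen before solving the $W$-driven equation, so that the constants in H1--H4 are genuinely deterministic and the result is a quenched statement, is a useful clarification that the paper leaves implicit.
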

\begin{proof}
 The proof is completely analogous to that of theorem~\ref{thm:ExistenceAndUniquenessToSPDEOnReferenceManifold} noting that in our case $\bar{\omega}$ is fixed and all the properties we exploited
in the proof of theorem~\ref{thm:ExistenceAndUniquenessToSPDEOnReferenceManifold} also hold here, although some of the bounds are dependent on the underlying probability space $\Omega$. To save needless
repetition the reader is directed to the proof of theorem~\ref{thm:ExistenceAndUniquenessToSPDEOnReferenceManifold}.
\end{proof}
\begin{rem}
We chose random isotropic evolution of $\M$ to illustrate that we need only that $t \mapsto g_{ij}(\cdot, \bar{\omega}, t)$ is continuous for each fixed $\bar{\omega} \in \Omega$ where we consider
the continuous version of the Brownian motion such that $t \mapsto f(t)$ is continuous, fixing a realisation. Of course, we still need $x \mapsto g_{ij}(x, \bar{\omega},\cdot)$ to be smooth. Since the initial metric is chosen sufficiently nice, this also holds.

However, the above results can be easily seen to hold for general random metrics $g_{ij}(x, \bar{\omega}, t)$ where $\bar{\omega}$ is fixed, is smooth in $x$ and continuous in $t$. Above all,
it is important to ensure the existence of a diffeomorphism $\Phi^{\bar{\omega}}_{t} :(\M, g^{\bar{\omega}}_{0}) \to (\M, g^{\bar{\omega}}_{t})$ and arguably random isotropic evolution yields the 
simplest example where one can write down $\Phi^{\bar{\omega}}_{t}$.

With the assumptions outlined above, we see that no topology changes as long as $T$ is chosen sufficiently. However, it is relatively easy to come up with examples where topology changes can occur.
\end{rem}
\begin{example}[An example of a topology change]
Let $n=2$ and consider the level set of the function
\[
 f : \R^{2} \to \R \quad (x,y) \mapsto \frac{x^{2}}{2} + \frac{(y^{2}-1)^{2}}{2}.
\]
Figure~\ref{fig:ContourPlot} graphs the level sets of $f(x,y) = c$ for $c = 0.3, 0.4, 0.5$ and $0.6$. We see that as $c$ decreases to 0.5, the smooth curve becomes pinched. As $c$ decreases
further, two distinct curves are produced and are clearly not diffeomorphic to the level curve at $c=0.6$. This shows that a topology change has occured.
\end{example}
Although the above example is deterministic, one can imagine a random isotropic perturbation of the $c=0.6$ level set where the perturbation is bounded above and below, but with sufficiently high values
as to cause pinching like that of the $c=0.5$ level set. Such a scenario was not considered in this chapter.
\begin{figure} [ht]
\centering
\includegraphics[scale=0.6]{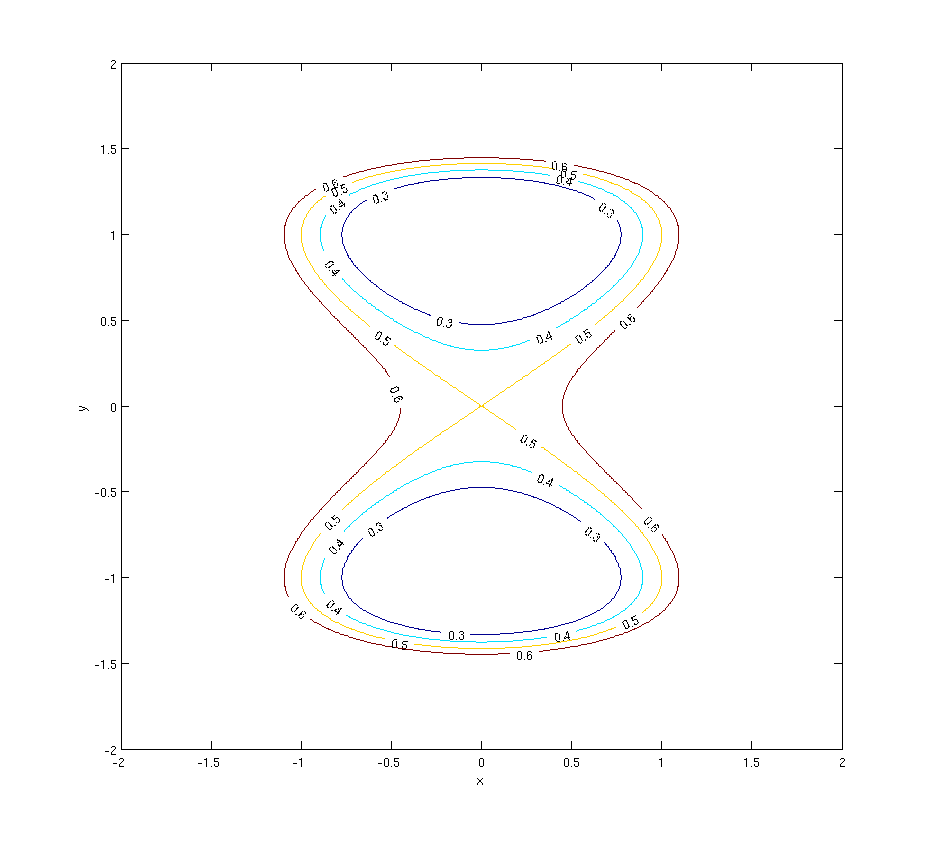}
\caption{Graph of the level sets of $f$ for $c=0.3, 0.4, 0.5$ and $0.6$.}
\label{fig:ContourPlot}
\end{figure}

\section{Further research}
\label{chap:FurtherResearch}
Although we have formulated what it means to have a SPDE on a moving hypersurface and then looked at SPDEs on an evolving manifold, there are still many avenues of research to pursue for the future.

The approach that has been used is the so-called variational approach. This approach is not widely used, mainly due to the constraints of H1 to H4 of assumption~\ref{Ass:HypothesesOnA&B}, which are
perhaps too restrictive in certain circumstances. For example, when one takes $\tilde{A}:= \Delta - f$ where $f(s) = s^{3} - s$ we see that $A$ no longer satisfies H2 of assumption~\ref{Ass:HypothesesOnA&B} 
for $f$ is not globally Lipschitz, nor monotone. Indeed, taking $A:= \Delta \tilde{A}$ leads to the Cahn-Hilliard-Cook equation (\cite{daprato1996}, \cite{kovacs2011}, amongst others).

 Perhaps a more natural approach would be that of \cite{daprato1992}, where weak solutions (in the sense of weak solutions to PDE) are considered. This is the approach used by \cite{gyongy1993} for formulating SPDEs on differentiable manifolds. Hence, our work is a generalisation
of this for the existence and uniqueness theory for the variational approach. An ideal research avenue would be to repeat the above theory, but for the \cite{daprato1992} approach.

The noise that was considered was a $U$-valued cylindrical Wiener-process. There are many more examples of ``noise'' to be considered. For example, white noise, which is constructed on the Schwartz
space of tempered distributions as in \cite{holden2009} could be considered. A problem here is that it is not obvious what the generalisation of the Schwartz space of functions on $\R^{n}$ is for an arbitrary Riemannian manifold. Indeed, such a generalisation exists if the manifold is a Nash manifold, \cite{aizenbud2007, aizenbud2010}. Another type
of noise that is becoming popular in the stochastic analysis community is a class of L\'{e}vy processes.  Here the \cite{daprato1992} method is extremely useful, as this is the method used in \cite{peszat2007}.
Here, the infinite dimensional L\'{e}vy process has already been constructed and so what remains is to formulate, analogously to this paper, SPDEs on evolving manifolds which are driven by a L\'{e}vy process and
to formulate the variational approach to the analysis of SPDEs.

The above are a few possibilities for generalisation for when the metric of the manifold is evolving deterministically, or indeed randomly. However, the ultimate goal is to consider when the metric
evolves depending on the solution of the SPDE on the manifold. This notion can be found \cite{neilson2010} for a coupled system of SPDEs where the hypersurface evolution is coupled
to the solution of the SPDE.

This is an extremely challenging mathematical problem. To attack it, one must first be able to understand what the actual problem is mathematically. For example, if we fix $\omega \in \Omega$ and consider
the metric given by $g_{ij}(u^{\omega}, \cdot ,t)$ then we are in the position of section~\ref{Section:A_Parabolic_SPDE_On_A_Randomly_Evolving_Riemannian_Manifold}, however, now
\[
 \Delta^{u^{\omega}}_{\M} u := \frac{1}{\sqrt{\abs{g(u^{\omega}, x, t)}}} \frac{\pd}{\pd x_{i}} \left( g^{ij}(u^{\omega},x , t) \sqrt{\abs{g(u^{\omega}, x, t)}} \frac{\pd u}{\pd x_{j}} \right)
\]
in local coordinates, which is a nonlinear differential operator on $\M$. We are now considering the equation (dropping the superscript $\omega$)
\begin{equation}
\label{eqn:SPDEOnCoupledM}
\begin{aligned}
 \rd u &=  \Delta^{u}_{\M} u \, \rd t + i \rd W(t) \\ 
u(0) & = u_{0}
\end{aligned}
\end{equation}
where $i$ is some suitable Hilbert-Schmidt operator. Since $\omega$ is fixed, the above is in fact a nonlinear PDE on $H$, whatever $H$ should be. Na\"{i}vely, we would set
$
 H:= L^{2}(\M , \rd \nu(g_{t}(u)) ; \R)
$
so now the Lebesgue and Sobolev spaces also depend on the solution $u$ of \eqref{eqn:SPDEOnCoupledM}. 

Suppose we have isotropic evolution, so the metric is given by 
$
 g_{ij}(u, x, t) = f(u(\omega, t)) g_{ij}(x,0)
$
where $f : H \to \R$ is such that there exists $a, b > 0$ such that $a \leq f(u) \leq b$ for every $u \in H$. Then, if the initial metric is sufficiently nice, we have that
$
 a' \leq \sqrt{\abs{g(u)}} \leq b'
$
for suitable $a' , b' > 0$. So, assuming no topology changes occur, we have to establish existence and uniqueness theory for the non-linear operator $\Delta^{u^{\omega}}_{\M}$ in this space, noting that
we do not have that $H \subset L^{2}(\M, \sqrt{\abs{g(u)}} \, \rd \nu(g_{0}); \R)$, which would be helpful in the attaining of estimates for H1 to H4. Note also that the definition of $H$ is circular and therefore
we should be looking for another space of solution, or even a different notion of solution such as viscosity solutions (\cite{evans1998}).
Since \eqref{eqn:SPDEOnCoupledM} is purely deterministic, a first approach to this problem would be looking at the deterministic analogue. Unfortunately this theory does not exist.

The reader will note that throughout this paper, all the functions are real valued. In \cite{funaki1992} SPDEs with values in a Riemannian manifold are mentioned. This is really a generalisation of the
pioneering work of \cite{elworthy1982}, where SDEs on manifolds were considered. A natural extension to this paper would be developing the above theory for  SPDEs whose solution is a function with values in the manifold. The approach of chapter~\ref{chap:SPDEOnGeneralEvolvingManifolds}
will be useful here but in order to formulate the SPDE analogously to the above, we would have to consider $C^{\infty}( S \times T\M)$ valued Wiener process, where $S$ is the unit circle and $T\M$ is the 
tangent bundle (\cite{funaki1992}). However, $C^{\infty}( S \times T\M)$ is \emph{not} a Hilbert space and the equations presented are in the Stratonovich form (not the It\={o} form as in this paper), so the theory of chapter~\ref{Chap:SPDETheGeneralSetting} would have to be adapted to this
space, with a suitable topology. This would then yield a theory of the variational approach to SPDEs whose solutions are functions taking values in the manifold, where the metric on the manifold is given by a one-parameter family of metrics. Further
generalisation could be found by asking that this family is random as in chapter~\ref{chap:SPDEOnGeneralEvolvingManifolds}.

In this paper, only specific examples of operators were considered, until chapter~\ref{chap:SPDEOnGeneralEvolvingManifolds}. This was really to build up intuition as to what spaces one should set the equations in. Using the theory of chapter~\ref{chap:SPDEOnGeneralEvolvingManifolds},
an extension to this project would be considering general parabolic operators, with certain non-linearities, on $\M$ where $\M$ has a one-parameter family of random-metrics associated to it. It is fairly obvious how to give a time-dependent generalisation of the assumptions $H1$ to $H4$ to give a general variational theory of SPDEs on evolving Riemannian manifolds.

In all the above extensions, only existence and uniqueness properties have been discussed. What would be interesting to look at is long time behaviour of solutions. For technical reasons, we have only considered
a fixed finite time $T$,  and assumed that the topology does not change over $[0,T]$. To extend, topology changes could be considered and long time behaviour of the solution on the manifold could also
be considered. In light of the above discussion, if one is able to say something about the solution of a SPDE whose solution is a function taking values in the manifold where the solution is coupled to the evolution of the metric, one could perhaps say
something interesting about the long term behaviour of the manifold \emph{and} the solution. This would be truly remarkable, because then one would have (hopefully) some convergence results of manifolds
and the solution.
 
In conclusion, there is much scope for developing the mathematical ideas presented in this paper for future research.

\bibliographystyle{plainnat}
\newpage
\bibliography{PhD_Thesis} 

\end{document}